\newtheorem{theorem}{Theorem}[section]
\newtheorem{lemma}[theorem]{Lemma}
\theoremstyle{definition}
\theoremstyle{remark}
\newtheorem{thm}[theorem]{Theorem}
\newtheorem{cor}[theorem]{Corollary}   
\newtheorem{lem}[theorem]{Lemma}   
\newtheorem{prop}[theorem]{Proposition}
\newtheorem{defn}[theorem]{Definition}
\newtheorem{conj}[theorem]{Conjecture}
\newtheorem{rmrk}[theorem]{Remark}   
\newtheorem{example}[theorem]{Example}
\newcommand{\be}{\begin{equation}}
\newcommand{\ee}{\end{equation}}
\newcommand{\e}{\varepsilon}
\newcommand{\diam}{\operatorname{diam}}
\newcommand{\area}{\operatorname{Area}}
\newcommand{\vol}{\operatorname{Vol}}
\begin{document}                        


\title{Geometrostatic Manifolds of small ADM Mass}

\author{Christina Sormani}{CUNY Graduate Center and Lehman College}
\author{Iva Stavrov Allen}{Lewis \& Clark College}





\begin{abstract}
We bound the locations of outermost minimal surfaces in 
geometrostatic manifolds whose ADM mass is small relative to the
separation between the black holes  
and prove the Intrinsic Flat Stability of the Positive Mass Theorem 
in this setting.
\end{abstract}

\maketitle   






\section{Introduction}

Geometrostatic manifolds are asymptotically Euclidean solutions of the time symmetric vacuum Einstein-Maxwell constraint equations
\be
R(g)=2|E|^2 \textrm{\ \ and\ \ } \mathrm{div} E=0.
\ee
They take the form of 
\be\label{M}
(M\,,\,g)=\left(\,\,{\mathbb{R}^3}\setminus\,
P \,,\, (\,\chi\,\psi)^2\, \delta\,\right)
\ee
where $P=\{p_1,...p_n\}$ is the ``set of holes'',
where $\delta$ is the Euclidean metric, and  where
$\psi>0,\, \chi>0$ on ${\mathbb{R}^3}\setminus P$
with $\Delta \chi=0,\, \Delta \psi=0$, and $\chi, \psi \to 1$ as $r\to \infty$ in $\mathbb{R}^3$.   These manifolds were studied by Brill and Lindquist \cite{Brill-Lindquist}, Misner \cite{Misner-geo} and
Lichnerowicz \cite{Lich-1944}. The conformal factors $\chi$ and $\psi$ are given by 
\be\label{chi-psi}
\chi(x)= 1+\sum_{i=1}^n \frac{\,\alpha_i\,}{\rho_i} \quad\textrm{ and }\quad
\psi(x)=1+\sum_{i=1}^n \frac{\,\beta_i\,}{\rho_i}
\ee
where $\alpha_i>0$ and $\beta_i>0$ are arbitrary and 
$\rho_i=\rho_i(x)=|p_i-x|$ is the Euclidean distance from $x$ to $p_i$.   This metric
on $M$ is complete and asympototically Euclidean as $x\to p_i$ and $|x|\to \infty$, so that
we have $n+1$ ends.   The
electric field, $E$, is the gradient of the electrostatic potential, $\ln(\psi/\chi)$, up to a sign.

We see from \cite{Brill-Lindquist} that the ADM mass of the $(n+1)^{st}$ end, where $|x|\to \infty$, is
\be\label{ADM}
m=m_{n+1}=\sum_{i=1}^n (\alpha_i+\beta_i), 
\ee
and that the $i^{th}$ end, where $x\to p_i$, has ADM mass
\be\label{m_i}
m_i=\alpha_i+\beta_i
+\sum_{j\neq i} \frac{(\beta_i \alpha_j +\beta_j\alpha_i)}{r_{i,j}}
\ee 
and charge
\be\label{q_i}
q_i=\beta_i-\alpha_i+\sum_{j\neq i}   \frac{(\beta_i \alpha_j -\beta_j\alpha_i)}{r_{i,j}}
\ee
with $r_{i,j}=|p_i-p_j|$ denoting the Euclidean distance from $p_i$ to $p_j$.  
We define the ``separation factor'' of the set of holes, $P$, to be
\be\label{separation}
\sigma=\sigma(P)=\min\big\{\sigma_1,..., \sigma_n, |p_1|,...,|p_n|\big\}
\textrm{ where } \sigma_i=\min\big\{r_{i,j}:\, j\neq i\big\}
\ee
When $q_i=0$ this manifold has zero scalar curvature and otherwise
these manifolds have nonnegative scalar curvature.

Note that
the Riemannian Schwarzschild black hole of mass $m_1$ is an 
example of a geometrostatic manifold with a single point $p_1=0$,
charge, $q_1=0$, and mass, $m_1=2\alpha_1=2\beta_1$,
\be\label{Sch}
\left(M,g_{\mathrm{Sch}}\right)=\left(\,{\mathbb{R}^3}\setminus\{p_1\},\,\, (\chi\psi)^2 \delta\,\right)
\quad
\textrm{ where }\quad 
\chi(x)=\psi(x)= 1+ \tfrac{m_1}{2\rho_1}
\ee
and where $\delta$ is the Euclidean tensor.  It has two asymptotically
flat ends: as $r=|x|\to \infty$ and as $r=|x|\to 0$.   Between the two ends is
a neck with a closed minimal surface 
$
\Sigma= \{x:\, |x|=m_1/2\}  
$
which is called an ``apparent horizon".   Note
that 
\be
\area_g(\Sigma)= 
\left(1+\tfrac{m_1}{2(m_1/2)}\right)^4 4\pi \left(\tfrac{m_1}{2}\right)^2
=16\pi m_1^2
\textrm{ and }m=m_{n+1}=m_1.
\ee
Any geometrostatic manifold,
$(M,g)$, satisfying (\ref{M})-(\ref{chi-psi}) may be viewed as a collection of
$n$ black holes each with mass, $m_i$, and charge, $q_i$.   

Recall the Positive Mass Theorem  
states that the ADM mass of an asymptotically Euclidean manifold with
nonnegative scalar curvature is nonnegative, and when the ADM mass
is 0 the manifold is Euclidean space \cite{Schoen-Yau-positive-mass}.   This is easily seen to hold in
the geometrostatic setting: by (\ref{ADM}), we have
\be
m=\sum_{i=1}^n (\alpha_i+\beta_i) \ge 0.
\ee
and
\be \label{static}
m=0 \quad \implies\quad  \alpha_i=\beta_i=0 \quad
\implies \quad (M,g)=({\mathbb{R}^3}, \delta).
\ee
In particular there are no black holes if the ADM mass is zero.  

The
Penrose inequality states that if
$M'$ is an asymptotically Euclidean manifold of nonnegative
scalar curvature 
whose boundary $\partial M'$ is an outermost minimal surface
then
\be \label{Penrose}
m_{ADM}(M',g) \ge \sqrt{ \frac{\area_g(\partial M')}{16\pi} \,}.
\ee  
This was proven for $M'$ with a connected
boundary by Huisken-Ilmanen in \cite{Huisken-Ilmanen}.
Bray \cite{Bray-Penrose} proved the inequality even when the boundary
has more than one connected component.

\begin{defn}\label{outermost-surface}
In a Brill-Lindquist geometrostatic manifold, $M$, the outermost minimal
surface $\Sigma_i$ about $p_i$ is a closed connected minimal surface, 
$\Sigma_i=\partial \Omega_i \subset {\mathbb{R}}^3$ where 
$p_i\in \Omega_i$, such that for any $\Omega\subset {\mathbb{R}}^3$
with $p_i \in \Omega$ and $\partial \Omega$ a closed minimal surface, we have
$\Omega \subseteq \Omega_i$.  
\end{defn}

In the geometrostatic setting, each end $x\to p_i$ has such 
an outermost minimal surface, $\Sigma_i$ (see Example~\ref{ex-min-apart}).  Such
minimal surfaces exist by the work of Huisken-Ilmanen \cite{Huisken-Ilmanen} which we review within Subsection~\ref{sect-HI}.  Note that it is possible when
$p_i$ and $p_j$ are close enough, that $\Sigma_i=\Sigma_j$
(see Example~\ref{ex-min-share}).  In fact such an example is computed numerically by Brill and Lindquist when $m_i$ and $m_j$ are large compared to $r_{i,j}$ \cite{Brill-Lindquist}.   In this paper we assume
separation factor, $\sigma$, as in (\ref{separation}),
is large compared to the ADM mass, $m$, of the manifold, $M$,
and conclude 
that each outermost minimal surface, $\Sigma_i$, is distinct and 
is located in an annular region
around $p_i$:

\begin{thm}\label{minsurf}
There exists a universal constant $C_1\gg 1$ such that if a geometrostatic
manifold, $M$, has
\be
m<\frac{\sigma}{\,20 C_1},
\ee
where $m$ is the ADM mass of the end at infinity as in (\ref{ADM}) and
$\sigma$ is the separation factor as in (\ref{separation}), then
for all $i$ the $i^{th}$ outermost minimal surface of $M$ satisfies
\be 
\Sigma_i \subseteq B_\delta\left(p_i, 2C_1\sqrt{\area_g(\Sigma_i)/\pi}\right)\setminus B_\delta\left(p_i, \frac{\alpha_i\beta_i}{4C_1(\alpha_i+\beta_i)}\right).
\ee 
In particular, since $2C_1\sqrt{\area_g(\Sigma_i)/\pi}\,\le \,8C_1m<\sigma/2$, 
the surfaces $\Sigma_i$ for distinct $i$ are disjoint. 
\end{thm}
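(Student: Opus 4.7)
The plan is to localize $\Sigma_i$ by combining the strong maximum principle for tangencies of $\Sigma_i$ with Euclidean coordinate spheres $S_r = \{\rho_i = r\}$ about $p_i$, together with the Huisken--Bray Penrose inequality $\area_g(\Sigma_i) \le 16\pi m_i^2$. The hypothesis $m < \sigma/(20C_1)$ makes the external potential contributions $\hat\chi_i = \sum_{j\ne i}\alpha_j/\rho_j$, $\hat\psi_i = \sum_{j\ne i}\beta_j/\rho_j$ and their gradients all of size $O(m/\sigma)$ on $\{\rho_i \le \sigma/2\}$, so that the analysis reduces, up to controllable corrections, to the isolated Brill--Lindquist model $\chi_0 = 1+\alpha_i/\rho_i$, $\psi_0 = 1+\beta_i/\rho_i$.

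From the conformal-change formula $H_g = (\chi\psi)^{-1}(H_\delta + 2\partial_{\rho_i}\ln(\chi\psi))$, a direct computation in the isolated model gives
$$
H_g^{\mathrm{out}}(S_r) = \frac{2r(r^2-\alpha_i\beta_i)}{(r+\alpha_i)^2(r+\beta_i)^2},
$$
vanishing at $r=\sqrt{\alpha_i\beta_i}$, and the external corrections shift this zero by an $O(m/\sigma)$ factor. The same model gives $\area_g(S_r) = 4\pi(r+\alpha_i)^2(r+\beta_i)^2/r^2$, minimized at $r=\sqrt{\alpha_i\beta_i}$ with value $4\pi(\sqrt{\alpha_i}+\sqrt{\beta_i})^4$. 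By \eqref{m_i} and the hypothesis, $m_i \le (\alpha_i+\beta_i)(1+O(m/\sigma))$, so $\sqrt{\area_g(\Sigma_i)/\pi}$ is comparable to $\alpha_i+\beta_i$ via the Penrose bound above and a matching lower bound coming from the nearness of $\Sigma_i$ to the model horizon sphere $S_{\sqrt{\alpha_i\beta_i}}$.

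For the outer bound $r_{\max} \le 2C_1\sqrt{\area_g(\Sigma_i)/\pi}$: the maximum principle at the Euclidean-farthest point of $\Sigma_i$, combined with the outermost-minimal characterization of $\Sigma_i$ as the area-minimizer among surfaces enclosing $\Omega_i$ and the resulting mean-convex foliation $\{S_r\}_{r \ge r^+}$ (with $r^+ = \sqrt{\alpha_i\beta_i}(1+O(m/\sigma))$), forces $r_{\max}$ to lie within a universal multiple of $\sqrt{\alpha_i\beta_i}$. The AM--GM bound $(\sqrt{\alpha_i}+\sqrt{\beta_i})^2 \ge 2\sqrt{\alpha_i\beta_i}$ then converts this into the stated form with a universal $C_1$. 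For the inner bound $r_{\min} \ge \alpha_i\beta_i/(4C_1(\alpha_i+\beta_i))$: the same strategy is run in the second asymptotically flat end near $p_i$ via the inversion $\tilde\rho_i = \alpha_i\beta_i/\rho_i$ mapping it to an AF end with ADM mass $m_i$; the mean-convex foliation and outermost-minimality there force $\tilde\rho_i \le C\sqrt{\alpha_i\beta_i}$ on $\Sigma_i$, i.e., $\rho_i \ge \sqrt{\alpha_i\beta_i}/C$, and the AM--GM bound $\sqrt{\alpha_i\beta_i} \ge 2\alpha_i\beta_i/(\alpha_i+\beta_i)$ yields the stated lower bound. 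The mutual disjointness of the $\Sigma_i$'s is then immediate from $2C_1\sqrt{\area_g(\Sigma_i)/\pi} \le 8C_1m < \sigma/2$.

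The main technical obstacle is to propagate the $O(m/\sigma)$ external corrections uniformly through both the maximum-principle and area estimates so that they accumulate into a single universal constant $C_1$ independent of $(\alpha_i,\beta_i)$ and of the configuration; in particular, verifying that the mean-convex foliation combined with outermost-minimality robustly excludes $\Sigma_i$ from extending beyond the transition radius in both the first and second ends—and that the competing comparison spheres and the precise structure of the area function $r \mapsto \area_g(S_r)$ behave as in the isolated model under these perturbations—is where the bulk of the work lies.
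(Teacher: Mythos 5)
There is a genuine gap in your outer containment, and it sits at the crux of the theorem: excluding the ``shared horizon'' scenario of Example~\ref{ex-min-share}, in which a single connected outermost minimal surface stretches from near $p_i$ all the way to near another hole $p_j$. Your outer bound rests entirely on the coordinate spheres $S_r(p_i)$ forming a mean-convex foliation for $r$ beyond $r^+\approx\sqrt{\alpha_i\beta_i}$, so that the Euclidean-farthest point of $\Sigma_i$ from $p_i$ produces a forbidden interior tangency. But those spheres are mean-convex only while they stay well away from the other holes: once $r$ is within $O(m)$ of $r_{i,j}$, the gradient of $\ln(\chi\psi)$ blows up near $p_j$ and $H_g(S_r)$ becomes negative there, so the tangency argument cannot be run if $r_{\max}$ is comparable to $\sigma$. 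Nothing in your proposal forbids this a priori; the barriers supplied by the other asymptotically flat ends only keep compact minimal surfaces out of the tiny balls $B_\delta(p_j,\approx\sqrt{\alpha_j\beta_j})$, not out of an $O(\sigma)$-neighborhood of $p_j$. The paper closes exactly this gap by a different mechanism: Lemma~\ref{lem-for-prop} shows $\Sigma_i$ meets $B_\delta(p_i,\sqrt{\area_g(\Sigma_i)/\pi})$, so if it also had a point outside $B_\delta(p_i,2C_1r)$ with $r=\sqrt{\area_g(\Sigma_i)/\pi}$, connectedness would force it to cross the annulus $B_\delta(p_i,2C_1r)\setminus B_\delta(p_i,C_1r)$; after rescaling, that annulus has uniformly bounded curvature and injectivity radius (Propositions~\ref{curvlemma:oct} and~\ref{injrad:march}), and the Colding--Minicozzi monotonicity formula then yields $\area_g(\Sigma_i)\ge \pi e^{-2}s_0^2C_1^2r^2>\area_g(\Sigma_i)$ once $C_1>e/s_0$. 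A connected surface of large Euclidean diameter need not have large area (a thin tube), so minimality must enter quantitatively at this step; your proposal contains no substitute for it.

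Two secondary points. First, the perturbation bookkeeping is more delicate than a uniform ``$O(m/\sigma)$'': when $\beta_i\ll\alpha_i$ the external contributions shift the zero of $H_g(S_r)$ by an absolute amount of order $m\alpha_i/\sigma$, which can dwarf $\sqrt{\alpha_i\beta_i}$ itself, so the transition radius is not $\sqrt{\alpha_i\beta_i}\,(1+O(m/\sigma))$ and the claimed two-sided comparability of $\sqrt{\area_g(\Sigma_i)/\pi}$ with $\alpha_i+\beta_i$ does not follow; the paper only ever needs the one-sided Penrose bound $\area_g(\Sigma_i)\le 16\pi m^2$. Relatedly, your inequality $\area_g(\Sigma_i)\le 16\pi m_i^2$ would require $\Sigma_i$ to be outermost with respect to the end at $p_i$, which is not known; the correct application is to the end at infinity of $M'$. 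Second, your inner bound via the inversion $\tilde\rho_i=\alpha_i\beta_i/\rho_i$ is the same device the paper uses (Theorem~\ref{inversion}), but the paper then feeds the inverted manifold back into the monotonicity lemma (Lemma~\ref{lem-prop1}) rather than into a foliation argument, for precisely the reason above; if the foliation route were repaired, your barrier version would actually give the stronger radius $\sqrt{\alpha_i\beta_i}\ge 2\alpha_i\beta_i/(\alpha_i+\beta_i)$, but as written it inherits the same gap.
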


This theorem is proven in Section~\ref{sect-minsurf}.

Huisken-Ilmanen defined the exterior region, which we call an outermost region,
in \cite{Huisken-Ilmanen}.  We review this notion carefully in 
Subsection~\ref{sect-HI} just stating the definition in our setting here:

\begin{defn}\label{outermost-region}
The ``outermost region", $M'\subset M$, is 
\be
 M'={\mathbb{R}}^3 \setminus \bigcup_{i=-n'}^n \Omega_i.
 \ee
 where each $\Omega_i$ is diffeomorphic to a ball and has outward minimizing boundary $\Sigma_i$.  For $i\in \{1,...,n\}$ these are the regions $\Omega_i$ containing $p_i$ that we defined above and for $i\le 0$ these are possible additional
 outward minimizing regions (which we conjecture do not exist).   Note 
 that $M'$ has one end as $|x|\to \infty$ and has
an outermost minimizing boundary, 
\be
\partial M'=\Sigma=\bigcup_{i=-n'}^n \Sigma_i,
\ee
and no closed interior minimal surfaces.  
\end{defn}

The outermost region
satisfies the time symmetric vacuum Einstein-Maxwell equation.  So it has 
nonnegative scalar curvature and, if the charge is $0$, then it has zero scalar curvature.  By the Penrose
Inequality in the form proven by Bray in \cite{Bray-Penrose}  
we have
\be 
m=m_{n+1}=m_{ADM}(M',g)\ge \sqrt{\frac{1}{16\pi}\sum_{i=-n'}^n
\area_g(\Sigma_i)\,}.
\ee
We prove the following theorem in Section~\ref{sect-minsurf}:
\begin{thm} \label{thm-M'}
Let $C_1$ be the constant of Theorem \ref{minsurf}, and suppose that $M$ is geometrostatic with $\sigma >20 mC_1$. For all $1\le i\le n$ there exists lengths $\gamma_i\le 8C_1m$ such that the outermost region $M'$ of Definition \ref{outermost-region} satisfies
\be
{\mathbb{R}}^3 \setminus \left(\bigcup_{i=1}^n B_\delta\left(p_i, \gamma_i\right)\right) \subseteq M' 
\subseteq {\mathbb{R}}^3 \setminus \left(\bigcup_{i=1}^n B_\delta\left(p_i, \frac{\alpha_i\beta_i}{4C_1(\alpha_i+\beta_i)}\right)\right)
\ee
\end{thm}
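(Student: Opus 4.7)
The plan is to combine Theorem \ref{minsurf}, Bray's version of the Penrose inequality \cite{Bray-Penrose} applied to $M'$, and Jordan--Brouwer separation to pin down each $\Omega_i$ with $i \ge 1$. I would set $\gamma_i := 2C_1\sqrt{\area_g(\Sigma_i)/\pi}$. Bray's inequality, applied to $M'$ whose boundary $\partial M' = \bigsqcup_k \Sigma_k'$ is a disjoint union of closed minimal components, yields $m^2 \ge (16\pi)^{-1}\sum_k \area_g(\Sigma_k')$; in particular $\area_g(\Sigma_i) \le 16\pi m^2$ for $1 \le i \le n$, so $\gamma_i \le 8C_1 m$ as the statement requires.

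The second (inner) inclusion is then immediate: Theorem \ref{minsurf} says each $\Sigma_i$ is closed, connected, and misses $B_\delta(p_i,\alpha_i\beta_i/(4C_1(\alpha_i+\beta_i)))$; since $p_i \in \Omega_i$ also lies in this small ball, Jordan--Brouwer forces the entire small ball into $\Omega_i$, and taking complements proves the upper bound on $M'$. For the first (outer) inclusion restricted to the indices $i \ge 1$, the same topological argument applied to $\Sigma_i \subseteq B_\delta(p_i,\gamma_i)$ identifies $\Omega_i$ as the bounded component of $\mathbb{R}^3 \setminus \Sigma_i$ and confines it to $B_\delta(p_i,\gamma_i)$.

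The main obstacle is handling the additional outward-minimizing regions $\Omega_j$ with $j \le 0$ permitted by Definition \ref{outermost-region}, which contain no $p_i$: the first inclusion requires that each such $\Omega_j$ nonetheless sits inside $\bigcup_{i=1}^n B_\delta(p_i,\gamma_i)$. I would argue by maximum principle. If some $\Omega_j$ had a point outside every $B_\delta(p_i,\gamma_i)$, I would take the largest inscribed Euclidean ball $B_\delta(c,R) \subseteq \Omega_j$, tangent to $\Sigma_j$ at some $x_0$, and combine the tangent-comparison $H_g(\partial B_\delta(c,R)) \ge H_g(\Sigma_j) = 0$ with the conformal change formula
\begin{equation*}
H_g(\partial B_\delta(c, R)) = (\chi\psi)^{-1}\bigl(\tfrac{2}{R} + 2\,\partial_{\nu_\delta}\log(\chi\psi)\bigr)
\end{equation*}
and the gradient estimate $|\nabla \log(\chi\psi)(x)| \le m/\min_i \rho_i(x)^2$ that follows directly from \eqref{chi-psi}, expecting the hypothesis $\sigma > 20 C_1 m$ together with the area bound on $\Sigma_j$ to deliver a contradiction. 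The delicate step is that the naive inscribed-ball argument alone only yields $\partial_{\nu_\delta}\log(\chi\psi)(x_0) \ge -1/R$, which is not yet a contradiction on its own; squeezing out an actual contradiction requires a careful choice of $c$ (or an auxiliary outscribed-ball comparison) that exploits the smallness of $|\nabla \log(\chi\psi)|$ throughout the putative $\Omega_j$. Combining this step with the previous paragraphs then closes both inclusions.
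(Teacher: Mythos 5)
The outer and inner inclusions for the indices $i\ge 1$, the Penrose-inequality bound $\gamma_i\le 8C_1m$, and the Jordan--Brouwer/connectedness arguments are all fine and match what the paper does. The gap is exactly where you flag it: the extra outward-minimizing regions $\Omega_j$, $j\le 0$, which contain no point of $P$. This is not a technicality to be absorbed later --- it is the actual content of the theorem beyond Theorem~\ref{minsurf}, and your proposed mechanism for it does not close. The inscribed-ball comparison yields $\partial_{\nu_\delta}\log(\chi\psi)(x_0)\ge -1/R$, and smallness of $|\nabla\log(\chi\psi)|$ makes the left side close to $0$, so the inequality is \emph{automatically satisfied}; no choice of center $c$ can turn it into a contradiction, because the maximum principle at an interior tangency only ever gives information in the direction compatible with near-flatness. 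An outscribed-ball comparison does point the right way (it forces the Euclidean circumradius of $\Omega_j$ to be at least of order $\dist(x_0,P)^2/m$, which one could then play off against the Penrose area bound), but even there the tangency point of the circumscribed sphere need not lie in the region where your gradient estimate is good, so one still needs a two-step localization that your sketch does not supply.

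The paper's route is different and worth contrasting. It never uses a tangent-sphere maximum principle. Instead, Proposition~\ref{prop-area-below} (Gauss--Bonnet applied to the minimal surface, which is a sphere by Huisken--Ilmanen, combined with the conformal mean-curvature formula \eqref{mean-curv-1}) gives the lower bound $\area_g(\Sigma_i)\ge 4\pi\big(\max_{\Sigma_i}|\nabla\Psi|^2/\Psi^4\big)^{-1}$. Lemma~\ref{lem-for-prop} then shows by contradiction that \emph{every} outermost minimal surface, including those with $i\le 0$, must enter $B_\delta\big(p_j,\sqrt{\area_g(\Sigma_i)/\pi}\big)$ for some $j\in\{1,\dots,n\}$: if it stayed away from all of $P$, the explicit form of $\chi,\psi$ would force $|\nabla\Psi|/\Psi^2<2\sqrt{\pi/\area_g(\Sigma_i)}$ on $\Sigma_i$, violating the Gauss--Bonnet bound. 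Case~2 of Lemma~\ref{lem-prop1} (the Colding--Minicozzi monotonicity argument in the rescaled annulus) then upgrades ``part of $\Sigma_i$ is near $p_j$'' to ``all of $\Sigma_i$ lies in $B_\delta(p_j,2C_1 r)$.'' Finally, note the paper's $\gamma_j$ in \eqref{defn-gammaj} is a maximum of $2C_1\sqrt{\area_g(\Sigma_i)/\pi}$ over \emph{all} surfaces $\Sigma_i$ attached to $p_j$, not just $\Sigma_j$ itself; your $\gamma_i:=2C_1\sqrt{\area_g(\Sigma_i)/\pi}$ would be too small if an extra region near $p_i$ had larger area than $\Sigma_i$. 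That last point is easily repaired, but the confinement of the $j\le 0$ regions needs the Gauss--Bonnet/monotonicity machinery (or a fully worked-out replacement), not the inscribed-ball sketch.
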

The reader may want to note that the definition and more detailed estimates involving $\gamma_i$ are presented in Sections \ref{sect-minsurf} and ~\ref{sect-almost-rigidity}, respectively. 

It has been conjectured that if a sequence of pointed asymptotically flat manifolds, 
$(M'_k, g_k, x_k)$,
with nonnegative scalar curvature whose boundaries are outermost
minimal surfaces, has $m_{ADM}(M'_k)\to 0$, then
$(M'_k,g_k)$ converge in the pointed intrinsic flat sense to
Euclidean space, $({\mathbb{R}^3}, \delta, 0)$, assuming the manifolds
are centered on well chosen points, $x_k$, which do not disappear
down increasingly deep wells.   When proposing this conjecture and proving it in the rotationally symmetric case, Lee and the second author demonstrated that this conjecture would be false if it were stated with a stronger notion of convergence \cite{LeeSormani1}.   
Lan-Hsuan Huang, Dan Lee and the first author have proven this conjecture in the graph setting assuming
additional hypotheses including one that requires all level sets to be outward minimizing  \cite{HLS-Crelle}.
It is unknown whether the setting considered in \cite{HLS-Crelle} can
include multiple black holes.  In Section~\ref{sect-almost-rigidity}, we prove this conjecture for geometrostatic manifolds:  

\begin{thm} \label{Almost-Rigidity}
Let $(M_k,g_k)$ be a sequence of geometrostatic manifolds with 
outermost regions, $M'_k$, such that
\be\label{hyp-thm1.5}
m_{ADM}(M'_k)\to 0 
\qquad \textrm{ and } \qquad
\frac{m_{ADM}(M'_k)}{\sigma(M_k)}\to 0 
\ee
where $\sigma(M_k)$
 is the separation factor of the set of holes
$P_k$ of $M_k$ as
in (\ref{separation}).  Assume furthermore that there is some 
$R_0\ge 0$ such that 
the set of accumulation points of
$\rho(\cup_k P_k) \,\cap \,(R_0,\infty)$ is of measure $0$ where
$\rho(x)=|x|$.

Then $(M_k',g_k)$ converges in the pointed intrinsic flat sense to
Euclidean space.   More precisely, for almost all $R>R_0$ 
the ball  $B_{g_k}(0,R) \subset (M_k',g_k)$ converges to the Euclidean ball 
$B_\delta(0,R) \subset \mathbb{E}^3$ in the intrinsic flat sense. 
\end{thm}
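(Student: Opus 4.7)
The plan is to invoke an intrinsic flat distance estimate of Lakzian--Sormani type, which bounds $d_{\mathcal{F}}$ between two Riemannian manifolds in terms of (i) the $C^0$ closeness of the metrics on a common subdomain $U$, (ii) the $g$-volumes of the excised complements, and (iii) the $g$-areas of the induced boundaries. Both $B_{g_k}(0,R)$ and $B_\delta(0,R)$ sit naturally inside $\R^3$, so the identity map will be the natural diffeomorphism between their common part; the task is to choose $U$ properly and estimate (i)--(iii).

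First, I would fix $R>R_0$ that is not an accumulation point of $\rho(\bigcup_k P_k)$: by the measure-zero hypothesis such $R$ are of full measure in $(R_0,\infty)$, and for any such $R$ there are $\epsilon>0$ and $K$ with $|p|\notin (R-\epsilon,R+\epsilon)$ for every $p\in P_k$ and $k\geq K$. For each hole I would choose an excision radius $r_{i,k}\geq \gamma_i^{(k)}$ (permitted by Theorem~\ref{thm-M'}) and form $U_k := B_\delta(0,R+\epsilon)\setminus \bigcup_i B_\delta(p_i^{(k)}, r_{i,k})$; taking $r_{i,k}\gg \alpha_i^{(k)}+\beta_i^{(k)}$ ensures the uniform bound $1\leq \chi_k\psi_k\leq 1+o(1)$ on $U_k$ and hence that the identity $(U_k,\delta)\to (U_k,g_k)$ is $(1+o(1))$-bilipschitz. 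The boundary contribution of the excised piece $E_k := B_{g_k}(0,R)\setminus U_k$ is handled by the Penrose inequality applied to $M'_k$:
\[ \sum_i \area_{g_k}(\Sigma_i^{(k)})\leq 16\pi\, m_{ADM}(M'_k)^2 \to 0. \]
The excised-volume contribution is controlled by integrating the explicit expansion of $(\chi_k\psi_k)^3$ against the Euclidean volume form over each neck $B_\delta(p_i^{(k)}, r_{i,k})\cap M'_k$, using the inner lower bound $\rho_i\geq \alpha_i^{(k)}\beta_i^{(k)}/(4C_1(\alpha_i^{(k)}+\beta_i^{(k)}))$ from Theorem~\ref{thm-M'} and the subadditivity $\sum a_i^3\leq (\sum a_i)^3 = m_{ADM}(M'_k)^3 \to 0$. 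Feeding these estimates into the Lakzian--Sormani lemma yields $d_{\mathcal{F}}(B_{g_k}(0,R), B_\delta(0,R))\to 0$.

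The main obstacle is the simultaneous tuning of the $r_{i,k}$: to make $\chi_k\psi_k\approx 1$ on $U_k$ one needs $r_{i,k}$ large relative to $\alpha_i^{(k)}+\beta_i^{(k)}$, while still keeping the cumulative excised $g_k$-volume small. Since the separation condition $\sigma(M_k)\geq 20C_1 m_{ADM}(M'_k)$ only forces $n_k = O(m_{ADM}(M'_k)^{-3})$ holes inside $B_\delta(0,R)$, a naive uniform choice $r_{i,k}=r_k$ is doomed; one must instead pick $r_{i,k}$ scaled with the local parameters $\alpha_i^{(k)},\beta_i^{(k)}$ and exploit the product structure of $\chi_k\psi_k$ together with $\sum a_i^3\leq (\sum a_i)^3$ to convert the Schwarzschild-type blow-up of $\chi_k\psi_k$ near each $p_i^{(k)}$ into a summable bound. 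The accumulation-point hypothesis enters in pinning down a good $R$ uniformly separated from all holes, which is what makes the identity-map surgery underlying Lakzian--Sormani well-defined on the sphere $\{|x|=R\}$.
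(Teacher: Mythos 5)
Your overall route is the same as the paper's: excise Euclidean balls around the holes, verify $\delta\le g_k\le(1+\e)^2\delta$ on the remaining region $U_k$ by taking the excision radii large compared to $\alpha_i+\beta_i$, control the boundary areas via Penrose, and feed everything into the Lakzian--Sormani estimate. But there is a genuine gap in your excised-volume step. You propose to bound $\vol_{g_k}$ of each neck $B_\delta(p_i,r_{i,k})\cap M'_k$ using only the inner radius $\rho_i\ge \alpha_i\beta_i/\bigl(4C_1(\alpha_i+\beta_i)\bigr)$ from Theorem~\ref{minsurf}. That bound is not strong enough. Expanding $(\chi_k\psi_k)^3\rho_i^2\,d\rho_i$ produces a logarithmic term with coefficient comparable to $(\alpha_i+\beta_i)^3$, namely
\be
(\alpha_i^3+9\alpha_i^2\beta_i+9\alpha_i\beta_i^2+\beta_i^3)\,
\ln\!\left(\frac{r_{i,k}}{\rho_{\min}}\right),
\ee
and with $\rho_{\min}=\alpha_i\beta_i/\bigl(4C_1(\alpha_i+\beta_i)\bigr)$ the logarithm behaves like $\ln(r_{i,k}/\beta_i)$ when $\beta_i\ll\alpha_i$, which is unbounded for fixed $m$. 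This is exactly the long-thin-neck phenomenon of Example~\ref{ex-lim-ERN}: as $\beta_i\to 0$ the neck approaches an infinite cylinder, and $\vol_{g_k}\bigl(B_\delta(p_i,r_{i,k})\cap M'_k\bigr)$ diverges even though $m$ is small. Subadditivity of cubes cannot rescue a quantity that is already infinite hole by hole.

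The fix --- and the step your proposal is missing --- is that the object to be compared is the intrinsic ball $B_{g_k}(0,R)$, not $M'_k\cap B_\delta(0,R)$, and one must first prove that $B_{g_k}(0,R)$ does not descend the neck past Euclidean radius $\delta_{i,R}\ge(\alpha_i+\beta_i)\exp\bigl(-R/(\alpha_i+\beta_i)\bigr)$ (the paper's Lemma~\ref{locatingM1}, obtained by integrating $g\ge(1+\tfrac{\alpha_i+\beta_i}{\rho_i})^2\delta$ radially). With that cutoff the logarithm is at most $R/(\alpha_i+\beta_i)$ plus a controlled term, so the log contribution becomes $\lesssim R\,(\alpha_i+\beta_i)^2+m^3/\e$, which is summable and tends to $0$. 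You should also note that the Lakzian--Sormani theorem requires, beyond $C^0$ closeness of the metrics on $W$, a bound on the distance distortion $\lambda=\sup_{x,y\in W}|d_{M_1}(x,y)-d_{M_2}(x,y)|$; since minimizing paths may have to detour around the excised balls, this needs a separate argument (the paper bounds the total detour length by Cauchy--Schwarz using that at most $4R/\sigma$ balls meet a segment and $\sum\gamma_{i,\e}^2\lesssim m^2/\e^2$). Neither step is cosmetic: without the depth cutoff the volume term fails outright, and without the $\lambda$ estimate the Lakzian--Sormani hypothesis \eqref{lambda} is unverified.
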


Before proving either theorem we present examples in Section~\ref{sect-examples} which illustrate why some aspects of the proofs are technically
difficult.   We review Huisken-Ilmanen and prove Theorem~\ref{minsurf} in Section~\ref{sect-minsurf}.  We review Intrinsic Flat Convergence 
particularly work of the first author with Lakzian \cite{Lakzian-Sormani}, and 
prove Theorem~\ref{Almost-Rigidity} in Section~\ref{sect-almost-rigidity}.  In the Appendix we provide additional information about geometrostatic manifolds needed within the paper.

%

The authors thank the organizers of the 2014 conference {\em Geometric Analysis and Relativity Conference} at the University of Science and Technology of China, at which this collaboration commenced.
We further thank the organizers of the 2016 workshop {\em Geometric Analysis and General Relativity} at the Banff International Research Station, during which the final version of this work was formulated.


 
\section{Examples}\label{sect-examples}

In this section we describe the location of the outermost region and outermost
minimal surfaces in a variety of Brill-Lindquist geometrostatic manifolds.

\begin{example}\label{ex-Sch}
The Riemannian Schwarzschild manifold as in (\ref{Sch}) can be depicted as in 
Figure~\ref{fig-Sch} to emphasize that it has two asymptotically flat ends: one as 
$|x|\to 0$ and one as $|x|\to \infty$.   The ends are not quite as flat as depicted here.  The outermost minimal surface,
$\Sigma=\{x:\, |x|=m_1/2\}$, lies in the neck between the two ends and the outermost region, $M'=\{x:\, |x|>m_1/2\}$, lies above $\Sigma$ in this image.
\end{example}

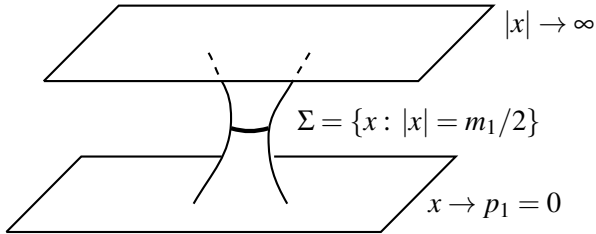
\begin{figure}[h] 
\centering
\begin{tikzpicture}[scale=.5]
\draw[thick] (-5,-5) to (5, -5) to (7,-3) to (-3, -3) to (-5, -5);
\draw[thick] (-6,-9) to (4, -9) to (6,-7) to (1.15, -7); 
\draw[thick] (-0.25,-7) to (-4, -7) to (-6, -9);
\draw[thick, dashed] (-.6, -4.25) to [out=-60, in=120] (-0.25, -5);
\draw[thick] (-.25, -5) to [out=-60, in=90] (0, -6) to  [out=-90, in=60](-1, -8.25);
\draw[thick, dashed] (2.1, -4.25) to [out=-120, in=60] (1.65, -5);
\draw[thick] (1.65, -5) to [out=-120, in=90] (1, -6.5) to  [out=-90, in=120](1.5, -8.25);
\draw[ultra thick] (0,-6.25) to [out=-15, in=-165] (1,-6.25);
\node[right] at (5,-8.3) {$x\to p_1=0$};
\node[right] at (1.5,-6.1) {$\Sigma=\{x:\, |x|=m_1/2\}$};
\node[right] at (7,-3.5) {$|x|\to\infty$};
\end{tikzpicture}
\caption{Example~\ref{ex-Sch}: The Riemannian Schwarzschild Manifold}
\label{fig-Sch}
\end{figure}

\begin{example}\label{ex-ERN}
The extreme Reissner-Nordstrom black hole has a metric of the form
(\ref{M})-(\ref{chi-psi}) with $n=1$, $\alpha_1>0$ and $\beta_1=0$.  See
Figure~\ref{fig-ERN}.  It has one end as $|x|\to \infty$ which is asymptotically
flat and one end as $x\to p_1$ which is asymptotically cylindrical.  It is not
a Brill-Lindquist geometrostatic manifold and has no outermost minimal surface
$\Sigma$.  One may view this example as having an infinitely long neck. 
\end{example}

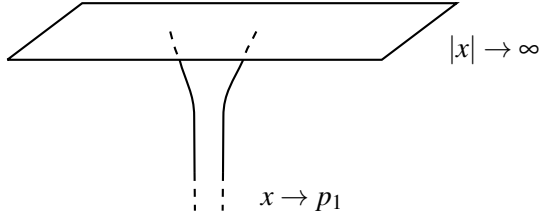
\begin{figure}[h]
\centering
\begin{tikzpicture}[scale=.5]
\draw[thick] (-5,-5) to (5, -5) to (7,-3.5) to (-3, -3.5) to (-5, -5);
\draw[thick, dashed] (-.65, -4.25) to [out=-75, in=115] (-0.4, -5);
\draw[thick] (-.4, -5) to [out=-75, in=90] (-0.01, -6.5) to (0,-8);
\draw[thick, dashed] (0,-8) to (0.01,-9);
\draw[thick, dashed] (1.65, -4.25) to [out=-115, in=75] (1.3, -5);
\draw[thick] (1.3, -5) to [out=-115, in=90] (0.77, -6.5) to (0.76,-8);
\draw[thick, dashed] (0.76,-8) to (0.75, -9);
\node[right] at (1.5,-8.75) {$x\to p_1$};
\node[right] at (6.5,-4.75) {$|x|\to\infty$};
\end{tikzpicture}
\caption{Example~\ref{ex-ERN}: Extreme Reissner-Nordstrom.}
\label{fig-ERN}
\end{figure}

\begin{example}\label{ex-min-apart}
A typical Brill-Lindquist geometrostatic manifold, $M$, satisfying the hypothesis
of Theorems~\ref{minsurf} and ~\ref{Almost-Rigidity} 
is depicted in Figure~\ref{fig-min-apart}.   Here
we have three black holes with small masses.  The $p_i$ are located sufficiently
far apart that they have distinct outermost minimal surfaces, $\Sigma_i$.  The outermost region, $M'$, lies above $\Sigma=\bigcup_i \Sigma_i$ in this image.
If $\beta_i << \alpha_i$
then the necks can be quite long as depicted here.  Even with long necks, Theorem~\ref{Almost-Rigidity} implies that $M'$ is close in the intrinsic flat sense to ${\mathbb{E}}^3$.  The intrinsic flat distance essentially measures a volume
between Euclidean space and $M'$, and we prove these necks have small volume.
\end{example}

\begin{figure}[h]
\centering
\begin{tikzpicture}[scale=.5]
\draw[thick] (-8.5,-6) to (7.5, -6) to (9.5,-4) to (-6.5, -4) to (-8.5, -6);
\draw[thick] (-8.25,-9.25) to (-5.25, -9.25) to (-4.25,-7.25) to (-6.1, -7.25); 
\draw[thick] (-6.35, -7.25) to (-7.25, -7.25) to (-8.25, -9.25);
\draw[thick] (-2.25,-8.55) to (0.75, -8.55) to (1.75,-6.85) to (-0.1, -6.85); 
\draw[thick] (-0.35, -6.85) to (-1.25, -6.85) to (-2.25, -8.55);
\draw[thick] (3.75,-9) to (6.75, -9) to (7.75,-7) to (5.9, -7); 
\draw[thick] (5.65, -7) to (4.75, -7) to (3.75, -9);
\draw[thick, dashed] (-6.6, -5) to [out=-60, in=95] (-6.35, -6);
\draw[thick] (-6.35, -6) to [out=-85, in=90] (-6.35, -6.25) to  [out=-90, in=60](-6.75, -8.25);
\draw[thick, dashed] (-0.6, -5) to [out=-60, in=95] (-0.4, -6);
\draw[thick] (-0.4, -6) to [out=-85, in=90] (-0.4, -6.25) to  [out=-90, in=60](-0.75, -8);
\draw[thick, dashed] (5.4, -5) to [out=-60, in=95] (5.6, -6);
\draw[thick] (5.6, -6) to [out=-85, in=90] (5.6, -6.25) to  [out=-90, in=60](5.25, -8.25);
\draw[thick, dashed] (-5.75, -5) to [out=-120, in=85] (-6.15, -6);
\draw[thick] (-6.15, -6) to [out=-95, in=90] (-6.15, -6.5) to  [out=-90, in=120](-5.85, -8.25);
\draw[thick, dashed] (0.25, -5) to [out=-120, in=85] (-.15, -6);
\draw[thick] (-.15, -6) to [out=-95, in=90] (-.15, -6.5) to  [out=-90, in=120](0.15, -8);
\draw[thick, dashed] (6.25, -5) to [out=-120, in=85] (5.87, -6);
\draw[thick] (5.87, -6) to [out=-95, in=90] (5.85, -6.5) to  [out=-90, in=120](6.15, -8.25);
\draw[ultra thick] (-6.35,-6.65) to [out=-15, in=-165] (-6.15,-6.65);
\node[above] at (-5.35,-7.2) {$\Sigma_1$};
\draw[ultra thick] (-0.35,-6.35) to [out=-15, in=-165] (-0.15,-6.35);
\node[above] at (.65,-7) {$\Sigma_2$};
\draw[ultra thick] (5.65,-6.5) to [out=-15, in=-165] (5.85,-6.5);
\node[above] at (6.55,-7.2) {$\Sigma_3$};
\node[above] at (11,-5.25) {$|x|\to\infty$};
\node[below] at (-6.75,-9.25) {$x\to p_1$};
\node[below] at (-0.75,-8.75) {$x \to p_2$};
\node[below] at (5.25,-9.25) {$x \to p_3$};
\end{tikzpicture}
\caption{Example~\ref{ex-min-apart}: $M$ as in Theorems~\ref{minsurf} and ~\ref{Almost-Rigidity}.  }
\label{fig-min-apart}
\end{figure}
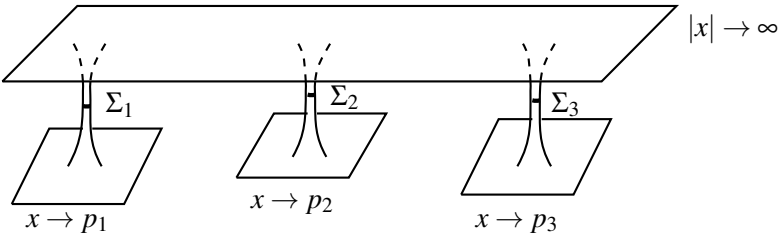

\begin{example}\label{ex-min-share}
Brill and Lindquist demonstrated numerically that if the masses of two
black holes are sufficiently large relative to the distance between them, then
they share a single
outermost minimal surface, $\Sigma_1=\Sigma_2$.  Such a manifold, $M$, which
fails to satisfy the hypothesis of Theorems~\ref{minsurf} 
and ~\ref{Almost-Rigidity}, is depicted in
 Figure~\ref{fig-min-share}.   The outermost region, $M'$, lies above the $\Sigma_i$ in this image.
\end{example}

\begin{figure}[h]
\centering
\begin{tikzpicture}[scale=.5]
\draw[thick] (-5,-5) to (5, -5) to (7,-3) to (-3, -3) to (-5, -5);
\draw[thick, dashed] (-.6, -4.25) to [out=-60, in=120] (-0.25, -5);
\draw[thick] (-.25, -5) to [out=-60, in=90] (0, -6) to  [out=-90, in=60](-1, -8.25);
\draw[thick, dashed] (2.1, -4.25) to [out=-120, in=60] (1.65, -5);
\draw[thick] (1.65, -5) to [out=-120, in=90] (1, -6.5) to  [out=-90, in=120](1.5, -8.25);
\draw[thick] (-.8, -8.5) to [out=60, in=120] (1.3, -8.5);
\draw[thick] (-1, -7) to (0, -8.5) to (-1, -10) to (-2, -8.5) to (-1, -7);
\draw[thick] (1.5, -7) to (2.5, -8.5) to (1.5, -10) to (.5, -8.5) to (1.5, -7);
\draw[ultra thick] (0,-6.25) to [out=-15, in=-165] (1,-6.25);
\node[right] at (1.5,-6.1) {$\Sigma_1=\Sigma_2$};
\node[right] at (7,-3.5) {$|x|\to\infty$};
\node[right] at (-4.5,-10) {$x\to p_1$};
\node[right] at (2,-10) {$x\to p_2$};
\end{tikzpicture}
\caption{Example~\ref{ex-min-share}: Failing the hypothesis of Theorem~\ref{minsurf} and ~\ref{Almost-Rigidity}.}
\label{fig-min-share}
\end{figure}
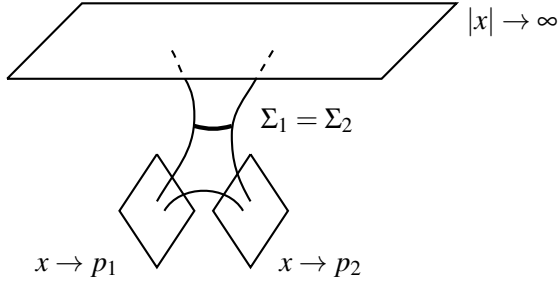

\begin{example}\label{ex-lim-ERN}
In Figure~\ref{fig-lim-ERN} we see a sequence of outermost regions, $M'_k$,
satisfying the hypothesis of Theorem~\ref{Almost-Rigidity}.  In particular,
$m_{ADM}(M'_k) \to 0$.  Here $n=1$, $\alpha_1>0$ and $\alpha_1>>\beta_1>0$
for each $M_k$, and so we have thinner and thinner necks which can be
quite long.  In the limit, the neck shrinks to a line segment which has no
volume at all and thus disappears under intrinsic flat convergence.
\end{example}

\begin{figure}[h] 
\centering
\begin{tikzpicture}[scale=.5]

\draw (-3, -4) to (-3, -8); 
\draw (-3.2, -4.2) to (-3, -4) to (-2.8, -4.2);
\draw (-3.2, -7.8) to (-3, -8) to (-2.8, -7.8);
\node[left] at (-3, -6) {$\approx L$};

\draw[thick] (-2,-5) to (2, -5) to (3,-3.5) to (-1, -3.5) to (-2, -5);
\draw[thick, dashed] (-.65, -4.25) to [out=-75, in=115] (-0.4, -5);
\draw[thick] (-.4, -5) to [out=-75, in=90] (-0.01, -6.5) to (0,-8);
\draw[thick, dashed] (1.65, -4.25) to [out=-115, in=75] (1.3, -5);
\draw[thick] (1.3, -5) to [out=-115, in=90] (0.77, -6.5) to (0.76,-8);
\draw[thick] (0,-8) to [out =-15, in=-165] (0.76, -8);

\draw[thick] (3,-5) to (7, -5) to (8,-3.5) to (4, -3.5) to (3, -5);
\draw[thick, dashed] (4.75, -4.25) to [out=-75, in=115] (5, -5);
\draw[thick] (5, -5) to [out=-75, in=90] (5.29, -6.5) to (5.3,-8);
\draw[thick, dashed] (6.25, -4.25) to [out=-115, in=75] (5.9, -5);
\draw[thick] (5.9, -5) to [out=-115, in=90] (5.57, -6.5) to (5.55,-8);
\draw[thick] (5.3,-8) to [out =-15, in=-165] (5.55, -8);

\draw[thick] (8,-5) to (12, -5) to (13,-3.5) to (9, -3.5) to (8, -5);
\draw[thick, dashed] (10.5, -4.25) to [out=-90, in=90] (10.5, -5);
\draw[thick] (10.5, -5) to [out=-90, in=90] (10.5,-8);

\draw (14, -4) to (14, -8); 
\draw (13.8, -4.2) to (14, -4) to (14.2, -4.2);
\draw (13.8, -7.8) to (14, -8) to (14.2, -7.8);
\node[right] at (14, -6) {$=L$};
\end{tikzpicture}
\caption{Example~\ref{ex-lim-ERN}: A sequence of outermost regions 
as in Theorem~\ref{Almost-Rigidity}}. 
\label{fig-lim-ERN}
\end{figure}
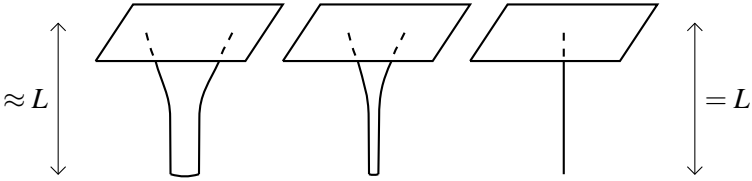

\section{Minimal Surfaces} \label{sect-minsurf}

In this section we locate the outermost minimal surfaces, $\Sigma_i$,
proving Theorem~\ref{minsurf}.   In Subsection~\ref{sect-HI} we 
review Huisken-Ilmanen which proves
the existence of the outermost minimal surfaces.  In Subsection~\ref{curv-inj}
we prove that an appropriately rescaled annular region within a geometrostatic manifold has bounded curvature and injectivity radius. These bounds 
are applied in Subsection~\ref{CM:review} to provide a
lower bound on $\area_g(\Sigma_i)$. In Subsection~\ref{part} we
prove that for all $\Sigma_i$ there is some $p\in P$ such that $\Sigma_i\,\cap \, \bar{B}_\delta(p, \sqrt{\area_g(\Sigma_i)/\pi}) \neq \emptyset$.  In Subsection~\ref{whole} we combine these results and the Penrose Inequality to prove that if the separation factor $\sigma$ is large compared to the ADM mass $m$, then
$ \Sigma_i \subseteq B_\delta(p, 2C_1\sqrt{\area_g(\Sigma_i)/\pi})$ for some $p\in P$.  
In Subsection~\ref{not-close} we apply the inversion proven in the Appendix to
flip $p\in P$ to $\infty$ to prove that $\Sigma_i$ is not too close to any $p\in P$, thus completing the proof of Theorem~\ref{minsurf}.

\subsection{A Review of Huisken-Ilmanen}\label{sect-HI}

in \cite{Huisken-Ilmanen}, Huisken and Ilmanen provide a rigorous definition of an
outermost minimal surface and prove its regularity.   We
review this here.

Let $M$ be a complete $3$-manifold with asymptotically flat ends.
Let $K_1$ be the closure of the union of the images of all smooth, compact, immersed minimal surfaces in M.   They observe that since the region near infinity is foliated by spheres of positive mean curvature, $K_1$ is compact.   The trapped region $K$ is defined to be the union of $K_1$ together with the bounded components of $M\setminus K_1$. The set $K$ is clearly compact as well. 

Let $M'$ be any connected component of $M\setminus K$.  This is considered to be an ``exterior region".  It has one asymptotically flat end and a compact boundary.   In our paper we are considering specifically $M'$ corresponding to the end as $r\to \infty$.   

In Lemma 4.1 of \cite{Huisken-Ilmanen}, Huisken and Ilmanen
prove that $M'$ is connected and asymptotically flat, has a compact, minimal boundary, and contains no other compact minimal surfaces (even immersed).   In addition $M'$ is diffeomorphic to $\mathbb{R}^3$ minus a finite number of regions diffeomorphic to open 3-balls with disjoint closures. The boundary of $M'$ minimizes area in its homology class.  This $M'$ is the outermost region we have defined in our introduction.

Let $\Sigma$ be any connected component of $\partial M'$.  Huisken-Ilmanen proved that
\be \label{Penrose-HI}
m_{ADM}(M',g) \ge \sqrt{ \frac{\area_g(\Sigma)}{16\pi} \,}
\ee  
which implies the Penrose Inequality if $\partial M'$ were connected \cite{Huisken-Ilmanen}.
Bray \cite{Bray-Penrose} proved the Penrose Inequality as in (\ref{Penrose}) 
even when the boundary
has more than one connected component.

Applying this to our paper, we have an outermost or exterior region
\be
M'= {\mathbb{R}}^3\setminus \bigcup_{\alpha} U_\alpha,
\ee
 where $U_\alpha$ are diffeomorphic to three dimensional balls with stable minimal boundaries, $\Sigma_\alpha=\partial U_\alpha$.  Note that every $p_i$
 must lie in one of the $U_\alpha$ and that if $p_i \in U_\alpha$ then
 $U_\alpha=\Omega_\alpha$ of Definition~\ref{outermost-surface}.  Recall that it is possible that some $\Omega_i=\Omega_j$. It is possible that there are some additional $U_\alpha$ which do not contain any $p_i$ for $i=1$ to $n$.  We set these $U_\alpha=\Omega_i$ with $i\le 0$ so that we may simply write:
\be \label{M'defined}
M' =  {\mathbb{R}}^3\setminus \bigcup_{i=-n'}^n \Omega_i
\textrm{ and } \partial M' =  \bigcup_{i=-n'}^n \Sigma_i
\textrm { where } \Sigma_i = \partial \Omega_i.
\ee
Observe that we have
\be\label{disjointOmega}
\Omega_i\neq\Omega_j \Longrightarrow \Omega_i\,\cap \,\Omega_j=\emptyset.
\ee 

\begin{conj} We conjecture that for every $\Omega_\alpha$ there is a $p_i\in \Omega_\alpha$ so that $M' =  {\mathbb{R}}^3\setminus \bigcup_{i=1}^n \Omega_i$.
\end{conj}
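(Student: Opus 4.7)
The plan is to argue by contradiction: suppose some component $\Omega_\alpha$ of $\R^3\setminus M'$ contains no puncture, and derive a contradiction from a tangent-sphere maximum principle. Under the assumption $\Omega_\alpha\cap P=\emptyset$, both conformal factors $\chi$ and $\psi$ are smooth, positive, and Euclidean-harmonic on the compact set $\overline{\Omega_\alpha}$, so $g=(\chi\psi)^2\delta$ extends to a smooth metric on $\overline{\Omega_\alpha}$ and the outward-minimizing (hence stable) minimal boundary $\Sigma_\alpha=\partial\Omega_\alpha$ is a smooth $2$-sphere. Since $\rho_i(x)=|x-p_i|$ has no critical points on $\R^3\setminus\{p_i\}\supset \overline{\Omega_\alpha}$, the extrema of $\rho_i|_{\overline{\Omega_\alpha}}$ lie on $\Sigma_\alpha$; denote them by $y_{0,i}, y_{1,i}$ with $r_{\min,i}=\rho_i(y_{0,i})$ and $r_{\max,i}=\rho_i(y_{1,i})$.

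At $y_{0,i}$ the Euclidean sphere $\partial B_\delta(p_i,r_{\min,i})$ is tangent to $\Sigma_\alpha$ with $\overline{\Omega_\alpha}$ lying in its closed exterior, so the tangent-sphere comparison for graphs yields $H_\delta(\Sigma_\alpha,\nu_\alpha)(y_{0,i})\ge -2/r_{\min,i}$, where $\nu_\alpha$ is the outward unit normal to $\Omega_\alpha$ (pointing toward $p_i$ at $y_{0,i}$). The opposite inward tangency at $y_{1,i}$ gives $H_\delta(\Sigma_\alpha,\nu_\alpha)(y_{1,i})\ge 2/r_{\max,i}$. Combining these with the conformal transformation law $H_g=(\chi\psi)^{-1}(H_\delta+2\partial_\nu\ln(\chi\psi))$ and the minimality $H_g\equiv 0$ on $\Sigma_\alpha$ translates the geometric comparisons into the analytic inequalities $\partial_{\nu_\alpha}\ln(\chi\psi)(y_{0,i})\le 1/r_{\min,i}$ and $\partial_{\nu_\alpha}\ln(\chi\psi)(y_{1,i})\le -1/r_{\max,i}$.

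In the single-puncture case $n=1$ (with $p_1=0$ and $\rho_1=|x|$), a direct computation from (\ref{chi-psi}) reduces these to $\tfrac{\alpha_1}{r_{\min}+\alpha_1}+\tfrac{\beta_1}{r_{\min}+\beta_1}\le 1$ and $\tfrac{\alpha_1}{r_{\max}+\alpha_1}+\tfrac{\beta_1}{r_{\max}+\beta_1}\ge 1$. Since the map $r\mapsto \tfrac{\alpha_1}{r+\alpha_1}+\tfrac{\beta_1}{r+\beta_1}$ is strictly decreasing with value exactly $1$ at $r=\sqrt{\alpha_1\beta_1}$, we get $r_{\min}\ge \sqrt{\alpha_1\beta_1}\ge r_{\max}$. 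Combined with the tautological $r_{\min}\le r_{\max}$ and the Hopf strong maximum principle applied to the quasilinear minimal-surface operator at the tangency points, this forces $\Sigma_\alpha=\partial B_\delta(0,\sqrt{\alpha_1\beta_1})$; then $\Omega_\alpha$ is either the enclosed Euclidean ball (contradicting $p_1\notin\Omega_\alpha$) or unbounded (contradicting boundedness).

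For the multi-puncture case, pick $i^*$ minimizing the Euclidean distance from $\overline{\Omega_\alpha}$ to the poles. At $y_{0,i^*}$ and $y_{1,i^*}$ the leading terms of $\partial_{\nu_\alpha}\ln\chi$ and $\partial_{\nu_\alpha}\ln\psi$ come from the $i^*$ summand and reproduce the single-puncture calculation, while the remaining summands contribute cross terms bounded by $\sum_{j\ne i^*}\alpha_j/\rho_j(y)^2$ and analogously for $\psi$. In the small-mass, well-separated regime of Theorem~\ref{minsurf} and Theorem~\ref{Almost-Rigidity} one has $m\ll\sigma$, so these cross terms are $O(m/\sigma^2)$ and can be absorbed as a small error, and the single-puncture argument goes through to yield the same contradiction. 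The main obstacle is to remove the small-mass hypothesis for the full conjecture: without $m\ll\sigma$ the cross terms need not be negligible, and one likely needs either to apply the tangent-sphere comparison simultaneously against spheres about several punctures whose tangency directions align along a common geodesic through $p_{i^*}$, or to supplement the argument with a Bunting--Masood-ul-Alam-style doubling along $\Sigma_\alpha$ combined with a scalar-flat rigidity statement adapted to Brill--Lindquist data, in order to rule out such an empty $\Omega_\alpha$ in full generality.
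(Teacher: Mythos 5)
First, a point of order: the statement you were asked to prove is presented in the paper as a \emph{conjecture}. The authors explicitly leave it open (they describe the regions $\Omega_i$ with $i\le 0$ as ``possible additional outward minimizing regions (which we conjecture do not exist)''), so there is no proof in the paper to compare against; your attempt must be judged on its own.

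Your single-puncture case is correct and is a genuinely nice argument. For $n=1$, the tangent-sphere comparisons at the nearest and farthest points of $\overline{\Omega_\alpha}$ from $p_1$, combined with $H_g\equiv 0$, the conformal transformation law, and the strict monotonicity of $r\mapsto \tfrac{\alpha_1}{r+\alpha_1}+\tfrac{\beta_1}{r+\beta_1}$, force $r_{\min}\ge\sqrt{\alpha_1\beta_1}\ge r_{\max}$; since $r_{\min}\le r_{\max}$ this pins $\rho_1$ to the single value $\sqrt{\alpha_1\beta_1}$ on all of $\overline{\Omega_\alpha}$, which is already absurd for a nonempty open set (you do not even need the final Hopf step). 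I checked the sign conventions and the reduction to the displayed inequalities; they are right.

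The gap is in the multi-puncture case, which is where the conjecture actually lives. The step ``these cross terms are $O(m/\sigma^2)$ and can be absorbed as a small error, and the single-puncture argument goes through to yield the same contradiction'' does not work as stated: once the error is nonzero, the two inequalities only yield
\begin{equation}
\sqrt{\alpha_{i^*}\beta_{i^*}}-\epsilon' \;\le\; \rho_{i^*}(x)\;\le\; \sqrt{\alpha_{i^*}\beta_{i^*}}+\epsilon' \qquad \text{for all } x\in\overline{\Omega_\alpha},
\end{equation}
i.e., $\overline{\Omega_\alpha}$ lies in a thin but nondegenerate annulus about $p_{i^*}$. A thin annulus contains plenty of nonempty open sets diffeomorphic to balls, so no contradiction follows. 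To close this you would need an independent quantitative lower bound on the Euclidean thickness of $\Omega_\alpha$ (for instance, transferring the area lower bound of Theorem~\ref{rmrk-s_0} into a Euclidean inradius bound) that beats $\epsilon'$, and this is not supplied. Moreover, even with that repaired, your argument would only apply in the small-mass, well-separated regime of Theorems~\ref{minsurf} and~\ref{Almost-Rigidity}; as you yourself acknowledge, the general case would require further ideas (simultaneous comparisons, or a doubling-plus-rigidity argument) that are only speculated about. So what you have is a correct proof of the $n=1$ case together with a program for the rest, not a proof of the conjecture.
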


\subsection{Minimal surfaces in a Conformally Flat Manifold}

Suppose
\be\label{conf-flat}
\bigg(\,M\,,\,g_{\textcolor{white}{0}}\bigg)=\bigg(\,\,{\mathbb{R}^3}\setminus\,
P \,,\, \Psi^2\, \delta\,\bigg)
\ee
where $P=\{p_1,...p_n\}$,  $\delta$ is the Euclidean metric, and 
$\Psi>0$ on ${\mathbb{R}^3}\setminus P$.   We will not require
 $\Psi^2=(\,\chi\,\psi)^2$ in the beginning of this subsection.

If $\Sigma$ is a closed surface in this manifold the area of $\Sigma$ with
respect to the metric $g$ is
\be
\area_g(\Sigma)=\int_{x\in \Sigma}\, d\sigma_g=\int_{x\in \Sigma} \Psi^2(x) \,d\sigma_\delta.
\ee

If we vary $\Sigma_t$ with respect to an arbitrary variational field ${\bf v}=f {\bf{n}}$ where $\bf{n}$ is the outward normal, we see that
\be
\frac{d}{dt} \area_g(\Sigma_t)|_{t=0}=\int_{x\in \Sigma} 
\left(2\Psi(x) \nabla \Psi_x\cdot {\bf n} + \Psi^2(x) H_x\right) f(x) \, d\sigma_\delta
\ee
where $H_x$ is the outward pointing mean curvature of $\Sigma$ as a submanifold of $(M, \delta)$ and $\nabla=\nabla_\delta$ is the gradient with respect to the Euclidean metric $\delta$.  Thus if $\Sigma_t$ is a minimal surface in $(M, g)$ then
\be
2\Psi(x) \nabla \Psi_x\cdot {\bf n} + \Psi^2(x) H_x=0 \qquad \forall x \in \Sigma
\ee
and
\be \label{mean-curv-1}
H_x= - 2 \frac{\nabla \Psi_x\cdot {\bf n}}{\Psi(x)} \qquad \forall x \in \Sigma.
\ee

On any surface, the mean curvature is the sum of the principal curvatures,
$H= \lambda_1+\lambda_2$, and while the Gauss curvature $K=\lambda_1\lambda_2$ and so $H^2-4K \ge 0$.   By Gauss-Bonnet, the
Euler charateristic satisfies
\be\label{GB}
2\pi\chi(\Sigma)=  \int_\Sigma K d\sigma_\delta \le \frac{1}{4}\int_\Sigma H^2 d\sigma_\delta.
\ee 
Combining this with (\ref{mean-curv-1}) we see that any minimal surface satisfies:
\be
2\pi \chi(\Sigma) \le   \frac{1}{4}\int_\Sigma \left(- 2 \frac{\nabla \Psi_x\cdot {\bf n}}{\Psi(x)}\right)^2 d\sigma_\delta
\ee
which also implies that
\begin{eqnarray}
2\pi \chi(\Sigma)
&\le&  \int_\Sigma \frac{(\nabla \Psi_x\cdot {\bf n})^2}{\Psi^2(x)} d\sigma_\delta\\
&\le&  \int_\Sigma \frac{|\nabla \Psi_x|^2}{\Psi^4(x)} \Psi^2(x) d\sigma_\delta\\
&\le&  \,\max_{x\in \Sigma}\left\{ \frac{|\nabla \Psi_x|^2}{\Psi^4(x)}\right\} \int_\Sigma \Psi^2(x) d\sigma_\delta\\
&=&  \,\max_{x\in \Sigma}\left\{ \frac{|\nabla \Psi_x|^2}{\Psi^4(x)}\right\} \area_g(\Sigma).
\end{eqnarray}
Note that we have equality iff $\frac{|\nabla \Psi_x|^2}{\Psi^4(x)}$ is constant on
$\Sigma$, $(\nabla \Psi\cdot {\bf n})=|\nabla \Psi|$ and $H^2=4K$
where $H$ and $K$ are the mean and Gauss curvatures of the surface 
with respect to the Euclidean metric on $M$.  Note further that $(\nabla \Psi\cdot {\bf n})=|\nabla \Psi|$ iff $\nabla \Psi$ is perpendicular to the surface iff
$\Psi$ is constant on the surface.  This implies the following proposition:

\begin{prop}\label{prop-area-below}
The area of a minimal surface in $(M,g)$ as in
(\ref{conf-flat}) is bounded below by:
\be\label{area-below}
\area_g(\Sigma)\ge 2\pi \chi(\Sigma)\left(\,\max_{x\in \Sigma}\left\{ \frac{|\nabla \Psi_x|^2}{\Psi^4(x)}\right\}\right)^{-1}
\ee
and we have equality iff $\Psi$ and $|\nabla \Psi_x|$ are constant on
$\Sigma$ and $H^2=4K$
where $H$ and $K$ are the mean and Gauss curvatures of the surface 
with respect to the Euclidean metric on $M$.
\end{prop}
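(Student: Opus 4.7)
The plan is to assemble the proof directly from the first variation computation, the Gauss--Bonnet inequality for $H^2$, and Cauchy--Schwarz, tracking equality in each step for the rigidity statement.

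First, I would record the first variation formula for a normal variation $\mathbf{v} = f\mathbf{n}$ applied to $\area_g$: since $d\sigma_g = \Psi^2 d\sigma_\delta$, the Leibniz rule splits the derivative into a term involving $\nabla\Psi\cdot\mathbf{n}$ and a term involving the Euclidean mean curvature $H$. Setting this to zero for arbitrary $f$ yields the pointwise identity $H_x = -2(\nabla\Psi_x\cdot\mathbf{n})/\Psi(x)$ on any minimal surface of $(M,g)$.

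Next I would use the standard Gauss--Bonnet-based inequality: since the Euclidean Gauss curvature $K = \lambda_1\lambda_2$ of $\Sigma$ satisfies $4K \le (\lambda_1+\lambda_2)^2 = H^2$, one obtains
\be
2\pi\chi(\Sigma) = \int_\Sigma K\, d\sigma_\delta \le \tfrac{1}{4}\int_\Sigma H^2\, d\sigma_\delta.
\ee
Substituting the mean curvature formula and then applying $|\nabla\Psi_x\cdot\mathbf{n}|^2 \le |\nabla\Psi_x|^2$ gives
\be
2\pi\chi(\Sigma) \le \int_\Sigma \frac{|\nabla\Psi_x|^2}{\Psi^4(x)}\,\Psi^2(x)\,d\sigma_\delta \le \max_{x\in\Sigma}\frac{|\nabla\Psi_x|^2}{\Psi^4(x)}\cdot \area_g(\Sigma),
\ee
and rearranging produces the stated lower bound on $\area_g(\Sigma)$.

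Finally, for the equality characterization I would track each inequality used. Equality in Gauss--Bonnet's step forces $\lambda_1=\lambda_2$ pointwise, i.e.\ $H^2=4K$ on $\Sigma$. Equality in the Cauchy--Schwarz-type step $|\nabla\Psi\cdot\mathbf{n}| \le |\nabla\Psi|$ forces $\nabla\Psi$ to be everywhere normal to $\Sigma$, which (since $\nabla\Psi$ being tangent-free along a connected surface means $\Psi$ is constant along tangential directions) is equivalent to $\Psi$ being constant on $\Sigma$. Equality in pulling the max outside the integral forces the pointwise function $|\nabla\Psi_x|^2/\Psi^4(x)$ to equal its maximum everywhere on $\Sigma$, i.e.\ to be constant, which together with the previous condition is equivalent to $|\nabla\Psi|$ being constant on $\Sigma$. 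The only nonroutine point is verifying the equivalence ``$\nabla\Psi\perp\Sigma$ iff $\Psi$ constant on $\Sigma$,'' but this follows at once from the definition of the tangential gradient. Assembling these three conditions yields the rigidity statement exactly as claimed.
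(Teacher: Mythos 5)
Your proof is correct and follows essentially the same route as the paper: the first variation identity $H_x=-2(\nabla\Psi_x\cdot\mathbf{n})/\Psi(x)$, the Gauss--Bonnet bound $2\pi\chi(\Sigma)\le\tfrac14\int_\Sigma H^2\,d\sigma_\delta$, the Cauchy--Schwarz step $|\nabla\Psi\cdot\mathbf{n}|\le|\nabla\Psi|$, and pulling the maximum outside the integral, with the same tracking of equality in each step. Nothing is missing.
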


In particular when $g$ is a metric with positive scalar curvature, we know by Huisken-Ilmanen that any
outward minimizing surface is smooth minimal surface diffeomorphic to a sphere.  
So we have (\ref{area-below}) with $\chi(\Sigma)=2$.  Furthermore, by the Penrose Inequality, as proven
in Huisken-Ilmanen, we have
\be
\area_g(\Sigma)\le 16\pi m^2 \textrm{ where } m=m_{ADM}(M)
\ee
with equality iff $M$ is isometric to Schwarzschild space.  Combining this with the previous proposition we have the following:

\begin{prop}\label{prop-mass-below}
If $(M,g)$ as in (\ref{conf-flat}) has positive scalar curvature and $\Sigma$
is outward minimizing then
\be\label{mass-below}
16\pi m^2 \ge 4\pi \left(\,\max_{x\in \Sigma}\left\{ \frac{|\nabla \Psi_x|^2}{\Psi^4(x)}\right\}\right)^{-1}
\ee
and we have equality iff $\Psi$ and $|\nabla \Psi_x|$ are constant on
$\Sigma$ and $H^2=4K$ and $(M,g)$ is isometric to Schwarzschild space.
\end{prop}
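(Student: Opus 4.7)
The plan is to obtain the inequality by chaining Proposition~\ref{prop-area-below} with the Huisken--Ilmanen form of the Penrose inequality recalled in (\ref{Penrose-HI}), and then to track the equality cases through both ingredients.

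First I would argue that $\Sigma$ is a smooth embedded $2$-sphere, so that Proposition~\ref{prop-area-below} applies with $\chi(\Sigma)=2$. By the results of Huisken--Ilmanen reviewed in Subsection~\ref{sect-HI}, any connected component of the boundary of the outermost region is a smooth, stable minimal surface which minimizes area in its homology class; standard arguments (Schoen--Yau stability, or Huisken--Ilmanen's own observations) then show that in a manifold of positive scalar curvature such a minimizer must be topologically a sphere. At this point Proposition~\ref{prop-area-below} gives
\begin{equation}
\area_g(\Sigma)\ge 4\pi\left(\max_{x\in \Sigma}\left\{ \frac{|\nabla \Psi_x|^2}{\Psi^4(x)}\right\}\right)^{-1}.
\end{equation}

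Second, since $\Sigma$ is outward minimizing (in particular, a connected minimal boundary of the outermost region), the Huisken--Ilmanen inequality (\ref{Penrose-HI}) yields $16\pi m^2 \ge \area_g(\Sigma)$. Combining the two bounds gives the inequality (\ref{mass-below}) directly, with no further work required.

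For the rigidity statement I would trace equality through both steps simultaneously. Equality in (\ref{mass-below}) forces equality in (\ref{Penrose-HI}), which by the rigidity clause of the Huisken--Ilmanen Penrose inequality (for one connected component) forces $(M,g)$ to be isometric to the Riemannian Schwarzschild manifold. It simultaneously forces equality in Proposition~\ref{prop-area-below}, which delivers the conditions that $\Psi$ and $|\nabla\Psi|$ are constant on $\Sigma$ and that $H^2=4K$ (umbilicity) with respect to the background Euclidean metric. Conversely, in the Schwarzschild example described in (\ref{Sch}) the outermost minimal surface $\{|x|=m_1/2\}$ is a Euclidean round sphere on which $\Psi=\chi\psi$ is radial, hence constant with constant $|\nabla\Psi|$, and is totally umbilic with $H^2=4K$, so all equality conditions are met and the inequality is sharp. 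The main point to be careful about is the topological/regularity input needed to invoke $\chi(\Sigma)=2$ and the connectedness assumption implicit in the Huisken--Ilmanen rigidity, both of which are supplied by the outward minimizing hypothesis together with the discussion in Subsection~\ref{sect-HI}.
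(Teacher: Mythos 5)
Your proposal is correct and follows essentially the same route as the paper: the paper likewise invokes Huisken--Ilmanen to conclude that an outward minimizing surface in positive scalar curvature is a smooth sphere (so Proposition~\ref{prop-area-below} applies with $\chi(\Sigma)=2$), combines this with the Penrose inequality $\area_g(\Sigma)\le 16\pi m^2$ and its rigidity clause, and reads off the equality conditions from both ingredients. The extra detail you supply on the sphere topology and the verification in the Schwarzschild example is consistent with the paper's discussion.
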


This immediately implies the following:

\begin{prop}\label{lower-mass-geo}
Let $(M,g)$ be a geometrostatic manifold with $\Psi(x)=\psi(x)\chi(x)$
satisfying (\ref{chi-psi}).   Then by (\ref{mean-curv-1}) any minimal surface
$\Sigma$ satisfies:
\be \label{mean-curv}
H_x= - 2 \left(\frac{\nabla \psi_x\cdot {\bf n}}{\psi(x)}+ \frac{\nabla \chi_x\cdot {\bf n}}{\chi(x)}\right) \qquad \forall x \in \Sigma.
\ee
If in addition the minimal surface is outward minimizing, we have
\begin{eqnarray}
4\pi &\le&  \int_\Sigma \left(\frac{\nabla \psi_x\cdot {\bf n}}{\psi(x)}+ \frac{\nabla \chi_x\cdot {\bf n}}{\chi(x)}\right)^2 d\sigma_\delta\\
&\le& \area_g(\Sigma) \,\max_{x\in \Sigma} 
\left(\frac{\nabla \psi_x\cdot {\bf n}}{\psi^2(x)\chi(x)}+ \frac{\nabla \chi_x\cdot {\bf n}}{\chi^2(x)\psi^(x)}\right)^2 \\
&\le& 16 \pi m^2 \,\max_{x\in \Sigma} 
\left(\frac{|\nabla \psi_x|}{\psi^2(x)\chi(x)}+ \frac{|\nabla \chi_x|}{\chi^2(x)\psi(x)}\right)^2 
\end{eqnarray}
and we have equality iff $\psi(x)\chi(x)$ and $\nabla(\psi(x)\chi(x))$ are constant on
$\Sigma$  and $H^2=4K$
and $(M,g)$ is isometric to Schwarzschild space.
\end{prop}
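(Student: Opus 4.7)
This proposition is essentially a direct transcription of Proposition \ref{prop-mass-below} under the identification $\Psi = \psi\chi$, so I would organize the proof into three short steps mirroring the three inequalities, followed by a separate paragraph on equality.

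First, for the mean curvature formula, start from (\ref{mean-curv-1}) and use the product rule $\nabla\Psi = \chi\nabla\psi + \psi\nabla\chi$. Dividing by $\Psi = \psi\chi$ gives
\[
\frac{\nabla\Psi\cdot{\bf n}}{\Psi} = \frac{\nabla\psi\cdot{\bf n}}{\psi} + \frac{\nabla\chi\cdot{\bf n}}{\chi},
\]
and substituting into (\ref{mean-curv-1}) produces (\ref{mean-curv}).

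Second, for the leading Gauss--Bonnet bound, I would invoke Huisken--Ilmanen (Subsection~\ref{sect-HI}): because the scalar curvature of a geometrostatic manifold is nonnegative and $\Sigma$ is an outward minimizing boundary component, $\Sigma$ is a topological sphere, so $2\pi\chi(\Sigma) = 4\pi$. Plugging the mean curvature formula from the first step into (\ref{GB}) then yields the first inequality in the displayed chain.

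Third, for the middle and final inequalities, rewrite the integrand by pulling out $\Psi^2 = \psi^2\chi^2$:
\[
\Bigl(\tfrac{\nabla\psi\cdot{\bf n}}{\psi} + \tfrac{\nabla\chi\cdot{\bf n}}{\chi}\Bigr)^2 = \Psi^2\Bigl(\tfrac{\nabla\psi\cdot{\bf n}}{\psi^2\chi} + \tfrac{\nabla\chi\cdot{\bf n}}{\chi^2\psi}\Bigr)^2,
\]
pull the maximum of the right-hand parenthetical out of the integral, recognize that $\int_\Sigma \Psi^2\,d\sigma_\delta = \area_g(\Sigma)$, and then apply the Penrose inequality (\ref{Penrose-HI}) to bound $\area_g(\Sigma) \le 16\pi m^2$. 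Finally use the Cauchy--Schwarz type bounds $|\nabla\psi\cdot{\bf n}| \le |\nabla\psi|$ and $|\nabla\chi\cdot{\bf n}| \le |\nabla\chi|$ to replace the directional derivatives by gradient norms.

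For the equality statement I would chase equalities through each step: equality in Penrose forces $(M,g)$ to be isometric to Schwarzschild; equality in the max-out-of-the-integral step forces the parenthetical to be constant along $\Sigma$ and forces $\nabla\psi, \nabla\chi$ to align with ${\bf n}$, whence $\psi, \chi$ and therefore $\Psi=\psi\chi$ are constant on $\Sigma$ with $|\nabla\Psi|$ constant; equality in Gauss--Bonnet forces $H^2 = 4K$. The proof presents no real obstacle; the only mild bookkeeping is ensuring the equality conditions stack consistently, which they do because Schwarzschild rigidity from Proposition \ref{prop-mass-below} already encodes all the structural rigidity needed, and each of the intermediate equalities is read off a single clean inequality.
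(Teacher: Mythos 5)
Your proposal is correct and follows the same route as the paper, which simply declares this proposition an immediate consequence of Propositions~\ref{prop-area-below} and~\ref{prop-mass-below} specialized to $\Psi=\psi\chi$ via the product rule, Gauss--Bonnet with $\chi(\Sigma)=2$, and the Penrose inequality. Your explicit spelling out of the factorization $\bigl(\tfrac{\nabla\psi\cdot{\bf n}}{\psi}+\tfrac{\nabla\chi\cdot{\bf n}}{\chi}\bigr)^2=\Psi^2\bigl(\tfrac{\nabla\psi\cdot{\bf n}}{\psi^2\chi}+\tfrac{\nabla\chi\cdot{\bf n}}{\chi^2\psi}\bigr)^2$ and of the equality chase is exactly the intended argument.
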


\begin{example}
We can apply this proposition to Schwarzschild space $M_{\mathrm{Sch}}$, satisfying (\ref{Sch}) with mass $m=\alpha_1+\beta_1=2\alpha_1$ to
verify that the level set $\rho_1^{-1}(m/2)$ is a minimal surface and prove it
is the only outermost minimizing surface in $M_{\mathrm{Sch}}$.
On  $\rho_1^{-1}(m/2)$ we have 
\be
\chi(x)=\psi(x)=1+\beta_1/\rho_1(x) = 1+ (m/2)/(m/2) = 1+ 1=2
\ee
 and
\be
\nabla \chi_x=\nabla\psi_x=( -\beta_1/\rho_1^2(x))\nabla \rho_1=
-(m/2)/(m/2)^2\nabla \rho_1= -(2/m)\nabla \rho_1
\ee
Since $H_x= 2/\rho_1=2/(m/2)=4/m$ and 
\be 
- 2 \left(\frac{\nabla \psi_x\cdot {\bf n}}{\psi(x)}+ \frac{\nabla \chi_x\cdot {\bf n}}{\chi(x)}\right)= - 2 (-2/m)(1/2) - 2 (-2/m)(1/2)= 4/m,
\ee
we have (\ref{mean-curv}) and $\rho_1^{-1}(m/2)$ is a minimal surface in $(M,g)$.
So now we know our outermost region $M' \subset \rho_1^{-1}[m/2, \infty)$.

Next suppose $\Sigma \subset M'$ were an outermost
minimizing surface, then by Proposition~\ref{lower-mass-geo} we have
\begin{eqnarray}
4\pi &\le& 16 \pi m^2 \,\max_{x\in \Sigma} 
\left(\frac{|\nabla \psi_x|}{\psi^2(x)\chi(x)}+ \frac{|\nabla \chi_x|}{\chi^2(x)\psi(x)}\right)^2 \\
&= & 16 \pi m^2 \,\max_{x\in \Sigma} \left(\frac{2|\nabla \psi_x|}{\psi^3(x)}\right)^2\\
& = & 16 \pi m^2 \,\max_{x\in \Sigma} 
\frac{4\,| -(m/2)/\rho_1^2(x)\,|^2\,|\nabla \rho_1 |^2}{|1+(m/2)/\rho_1(x)|^6}\\
&=& 16 \pi m^2 \,\max_{x\in \Sigma}\frac{(m \rho_1(x))^2}{|\rho_1(x)+(m/2)|^6}
\qquad \textrm{ because } |\nabla \rho_1 |=1\\
& = & 16 \pi m^2 \,\max_{x\in \Sigma} F^2(\rho_1(x), m/2)
\textrm{ where  } F(\rho,\beta)= 2 \rho\beta/(\rho+\beta)^3.
\end{eqnarray}
For fixed $\beta=\beta_1=m/2$, $F(\rho, \beta)$ converges to $0$ as 
$\rho\to 0$ and as $\rho\to \infty$.  F increases to a single critical point at 
$\rho=\beta/2=m/4$ and then decreases.
Since $\Sigma\subset M'$ implies $\rho_1(x)\ge \beta_1=m/2$, the maximum of
$F$ occurs at $\rho=m/2$.  Thus we have
\begin{eqnarray}
4\pi &\le& 16 \pi m^2 \,\max_{\rho \in [m/2,\infty)} F^2(\rho, m/2)
\le 16 \pi m^2 F^2(m/2, m/2)\\
&=& 16 \pi m^2 \left(\frac{2(m/2)(m/2)}{((m/2)+(m/2))^3}\right)^2 
= 16 \pi m^2  \left(\frac{m^2/2}{m^3}\right)^2   
= 4\pi.
\end{eqnarray}
Thus we have equality in Proposition~\ref{lower-mass-geo}, which implies that
\be
\frac{m^2 \rho_1^2(x)}{|\rho_1(x)+(m/2)|^6}=\frac{m^2 (m/2)^2}{|(m/2)+(m/2)|^6}
\textrm{ for all } x\in \Sigma.
\ee
So $\rho_1(x)=m/2$ for all $x \in \Sigma$.  Thus we have confirmed that 
$\rho_1^{-1}(m/2)$ is the only outermost minimizing surface in $M_{\mathrm{Sch}}$.
\end{example}

We conjecture more generally that if $(M,g)$ is a geometrostatic manifold as in 
(\ref{M}), then the only outermost minimizing surfaces are the $\Sigma_i$ defined in
$\ref{outermost-surface}$.

\subsection{Estimates on the curvature and the injectivity radius}
\label{curv-inj}

In this section we prove that an appropriately rescaled annular region within a geometrostatic manifold has bounded curvature and injectivity radius. These bounds will be applied later to locate the outermost minimal surfaces in these manifolds.

Fix a geometrostatic manifold $(\mathbb{R}^3\setminus P, g)$ and fix some $i$ with $1\le i\le n$.   We assume 
there is some positive length, $c>0$, such that
\be\label{choice-c-ann}
\sigma_i = \min_{j\neq i} \{ |p_i-p_j|\} > 5c
\quad\textrm{ or } \quad\sigma_i^{out}=\,\max_{j\neq i}\{|p_i-p_j|\} < \tfrac{1}{5}c.
\ee
Consider a Euclidean
annulus 
\be\label{annulus}
\mathcal{A}=\{u\in \mathbb{R}^3\ \big|\ \tfrac{1}{4}\le |u|\le 4\} 
\ee
and the mapping 
\be\label{scaledpullback:defn}
\Phi: \mathcal{A}\to \mathbb{R}^3\setminus P \text{\ \ by\ \ } u\mapsto p_i+c u.
\ee
By our choice of $c$ in (\ref{choice-c-ann}) we know that
$\Phi(\mathcal{A})$ avoids $P$ by a definite amount.  In fact
$\Phi^{-1}(P\setminus\{p_i\})$ either lies beyond the outer ring of the annulus
when $\sigma_i < 5c$
or lies within the inner ring when $\sigma^{out}_i> \frac{1}{5} c$.

The scaled pullback metric $g_{c,\Phi}:=c^{-2}\Phi^*g$ 
on the annulus is easily seen to be 
\be\label{pullbackformula}
\left(1+\frac{\alpha_i/c}{|u|}+\varphi_\alpha(u)\right)^2
\left(1+\frac{\beta_i/c}{|u|}+\varphi_\beta(u)\right)^2\delta,
\ee
where
\be
\varphi_\alpha(u)=\sum_{j\neq i} \frac{\alpha_j}{|cu+p_i-p_j|} \text{\ \ and\ \ }
\varphi_\beta(u)=\sum_{j\neq i} \frac{\beta_j}{|cu+p_i-p_j|}.
\ee

Before we state the main results of this section, we introduce some terminology which will make our statements more efficient.  

\begin{defn}\label{psi-u} {\em
Let $\psi(u)$ be a function (or a tensor field) defined on the annulus $\mathcal{A}=\{u\in \mathbb{R}^3\ \big|\ \tfrac{1}{4}\le |u|\le 4\}$. We say that \emph{$\psi$ is controllable by $K_0$} whenever there exists a positive increasing function $P$, independent of $\psi$, such that 
\be
\|\psi\|_{C^0(\mathcal{A})}\le P(K_0).
\ee
Furthermore, we say that \emph{$\psi$ is controllable by $K_0$ with all of its derivatives} whenever all of the Euclidean partial derivatives $\partial^l_u$ (of the components of $\psi(u)$) are controllable by $K_0$. }
\end{defn}

Here are the two main results of this subsection. 

\begin{prop}\label{curvlemma:oct}
Assume there is a positive length, $c>0$, that satisfies (\ref{choice-c-ann})
and consider the scaled pullback metric $g_{c,\Phi}$ on
$\mathcal{A}$
defined in \eqref{scaledpullback:defn}--\eqref{pullbackformula}. 
Observe that
\begin{enumerate}
\item $g_{c,\Phi}\ge \delta$. 
\medbreak
\item There exist constants $k_1$ and $k_2$ such that $k_1\delta \le g_{c,\Phi}\le k_2\delta$ and such that both $\frac{k_1}{k_2}$ and $\frac{k_2}{k_1}$ are controllable by $m/(\sigma_i+c)$. 
\medbreak
\item The Christoffel symbols of $g_{c,\Phi}$ are controllable by 
$m/(\sigma_i+c)$ with all of their derivatives. 
\medbreak
\item The (sectional) curvature of $g_{c,\Phi}$ is controllable by 
$m/(\sigma_i+c)$ with all of its derivatives. 
\end{enumerate}
\end{prop}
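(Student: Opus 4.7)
The plan is to derive controllable $C^\infty$ bounds on the conformal factor $\Psi_c := \chi_c \psi_c$, where $\chi_c(u) = 1 + (\alpha_i/c)/|u| + \varphi_\alpha(u)$ and $\psi_c(u) = 1 + (\beta_i/c)/|u| + \varphi_\beta(u)$, and then read off the four claims from standard formulae for conformally flat metrics. Claim (1) is immediate: every summand in $\chi_c$ and $\psi_c$ is nonnegative, so $\Psi_c \ge 1$ pointwise and $g_{c,\Phi} = \Psi_c^2\,\delta \ge \delta$.

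The central geometric input is a lower bound $|cu+p_i-p_j| \ge (\sigma_i+c)/C_0$, valid for every $u \in \mathcal{A}$, every $j\neq i$, and a universal constant $C_0$. This uses the dichotomy (\ref{choice-c-ann}): when $\sigma_i > 5c$ one has $|cu+p_i-p_j| \ge |p_i-p_j|-c|u| \ge \sigma_i - 4c \ge \sigma_i/5$, while when $\sigma_i^{out} < c/5$ one has $|cu+p_i-p_j| \ge c|u| - \sigma_i^{out} \ge c/20$; in either regime the bound dominates a universal multiple of $\sigma_i+c$. Combined with $\sum_{j\neq i}\alpha_j,\sum_{j\neq i}\beta_j \le m$, differentiating the defining series for $\varphi_\alpha,\varphi_\beta$ produces at most an extra factor $(c/(\sigma_i+c))^k \le 1$ per order, yielding uniform bounds $|\partial^k\varphi_\alpha|,\,|\partial^k\varphi_\beta| \le C_k\, m/(\sigma_i+c)$ on $\mathcal{A}$ for every $k \ge 0$.

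For claim (2), take $k_1 = (\inf_\mathcal{A}\Psi_c)^2$ and $k_2 = (\sup_\mathcal{A}\Psi_c)^2$, so $k_1/k_2 \le 1$ is automatic. The only unbounded parameters are $A := \alpha_i/c$ and $B := \beta_i/c$, but on $\mathcal{A}$ one has $1/|u| \in [1/4,4]$, hence
\[
\frac{\sup_\mathcal{A}\chi_c}{\inf_\mathcal{A}\chi_c}\ \le\ \frac{1 + 4A + C\,m/(\sigma_i+c)}{1 + A/4}\ \le\ 16 + C\,\frac{m}{\sigma_i+c},
\]
using the elementary estimate $(1+4t)/(1+t/4)\le 16$ for all $t\ge 0$. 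The analogous bound for $\psi_c$ then makes $k_2/k_1$ controllable by $m/(\sigma_i+c)$.

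For claims (3) and (4) I invoke the standard conformal formulae: the Christoffel symbols of $\Psi^2\delta$ are linear in the partials of $\log\Psi$, and the coordinate Riemann components $R^l_{ijk}$ are polynomial in the Christoffels and their first derivatives. By Fa\`a di Bruno, $\partial^k \log\chi_c$ is a polynomial in the quotients $\partial^j\chi_c/\chi_c$ for $j\le k$, so it suffices to control these. The $\varphi_\alpha$ contribution is handled by the estimates of the second paragraph; the \emph{main obstacle} is the Schwarzschild-type contribution $\partial^j(A/|u|)/\chi_c$, where $A$ can be arbitrarily large. Uniform control comes from the fact that $A$ appears in both numerator and denominator: $|\partial^j(A/|u|)|$ is bounded by a constant multiple of $A$ on $\mathcal{A}$, while $\chi_c \ge 1 + A/4$, so the quotient is controlled uniformly in $A$ by the elementary inequality $A/(1+A/4) \le 4$. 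Combining, $\partial^k\log\Psi_c$ and all Christoffel derivatives are controllable by $m/(\sigma_i+c)$, and contracting the resulting bounds on $R^l_{ijk}$ with $g_{c,\Phi}^{-1} = \Psi_c^{-2}\delta^{-1}$ (where $\Psi_c^{-2}\le 1$ by claim (1)) yields the required control of sectional curvatures and their derivatives.
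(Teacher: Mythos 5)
Your proposal is correct and follows essentially the same route as the paper: the uniform lower bound $|cu+p_i-p_j|\gtrsim \sigma_i+c$ from the dichotomy (\ref{choice-c-ann}) giving $C^\infty$ control of $\varphi_\alpha,\varphi_\beta$ by $m/(\sigma_i+c)$ (the paper's Lemmas \ref{partial-r} and \ref{varphi:ests}), the quotient trick $a/(1+a/4)\le 4$ to absorb the possibly large Schwarzschild coefficient $\alpha_i/c$ into the denominator (the paper's Lemma \ref{derivbds:lemma}, stated there for $d\log$ of each factor), and the conformal formulae for Christoffel symbols and curvature together with $g_{c,\Phi}^{-1}\le\delta^{-1}$. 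No substantive differences.
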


We also control the injectivity radius.  Note that Cheeger-Gromov-Taylor
estimate the injectivity radius in a far more general setting in \cite{ChGrTa}.

\begin{prop}
\label{injrad:march}
Assume there is a positive length, $c>0$, that satisfies (\ref{choice-c-ann})
and let $m/(\sigma_i+c)\le 1$. Consider the scaled pullback metric $g_{c,\Phi}$ defined in \eqref{scaledpullback:defn}--\eqref{pullbackformula}. There is a uniform lower bound, $i_0>0$, on the injectivity radii of $g_{c,\Phi}$ over $\mathcal{A}'=\{u\in \mathbb{R}^3\big| \tfrac{1}{2}\le |u|\le 2\}$. 
\end{prop}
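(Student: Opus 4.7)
The plan is to apply a local injectivity radius estimate of Cheeger--Gromov--Taylor \cite{ChGrTa} to metric balls of $g_{c,\Phi}$ centered at points of $\mathcal{A}'$, using the uniform curvature and two-sided metric comparison bounds supplied by Proposition~\ref{curvlemma:oct}.

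First I would exploit the hypothesis $m/(\sigma_i+c)\le 1$: every quantity ``controllable by $m/(\sigma_i+c)$'' is then bounded by a universal constant, so Proposition~\ref{curvlemma:oct} furnishes universal positive constants $k_1,k_2,K$ such that
\[
k_1\,\delta \le g_{c,\Phi} \le k_2\,\delta \quad\text{and}\quad |K_{\mathrm{sec}}(g_{c,\Phi})|\le K \quad\text{on } \mathcal{A}.
\]
For any $u\in\mathcal{A}'$, the Euclidean distance from $u$ to $\partial \mathcal{A}$ is at least $\tfrac{1}{4}$, so combined with $g_{c,\Phi}\ge k_1\,\delta$ we obtain the containment $B_{g_{c,\Phi}}(u,r_0)\subset\mathcal{A}$ for the universal radius $r_0:=\tfrac{1}{4}\sqrt{k_1}$. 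The companion estimate $g_{c,\Phi}\le k_2\,\delta$ yields $B_\delta(u,r/\sqrt{k_2})\subset B_{g_{c,\Phi}}(u,r)$ together with $dV_{g_{c,\Phi}}\ge k_1^{3/2}\,dV_\delta$, producing the uniform lower volume bound
\[
\vol_{g_{c,\Phi}}(B_{g_{c,\Phi}}(u,r)) \ge v_0\,r^3 \quad \text{for all } 0<r\le r_0,
\]
with $v_0:=\tfrac{4\pi}{3}\,k_1^{3/2}\,k_2^{-3/2}$.

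Then I would invoke the Cheeger--Gromov--Taylor estimate \cite{ChGrTa}, which bounds $\injrad(u)$ from below in terms of an upper sectional curvature bound on, and a lower volume bound of, a metric ball about $u$ that is contained in the manifold. The ingredients just assembled---namely $|K_{\mathrm{sec}}|\le K$, the containment $B_{g_{c,\Phi}}(u,r_0)\subset\mathcal{A}$, and the volume lower bound $v_0\,r_0^3$---are universal and independent of $u\in\mathcal{A}'$, of the configuration of the holes $P$, and of the length $c$. Consequently a universal $i_0>0$ emerges with $\injrad_{g_{c,\Phi}}(u)\ge i_0$ for every $u\in\mathcal{A}'$, as required.

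The main obstacle is the incompleteness of $(\mathcal{A},g_{c,\Phi})$, which prevents a naive appeal to injectivity radius estimates formulated on complete manifolds. The fix is exactly the passage from $\mathcal{A}'$ to $\mathcal{A}$ described above: the Euclidean buffer $\tfrac{1}{4}$ between $\mathcal{A}'$ and $\partial\mathcal{A}$ converts, via the two-sided comparison $k_1\delta\le g_{c,\Phi}\le k_2\delta$, into the $g_{c,\Phi}$-buffer needed by the local form of the Cheeger--Gromov--Taylor estimate. As a self-contained alternative one could combine a Rauch comparison bound of $\pi/\sqrt{K}$ on the conjugate radius with a Klingenberg-type short-loop argument, in which the uniform volume lower bound rules out short closed geodesic loops through $u$.
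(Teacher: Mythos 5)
Your argument is correct, but it takes a genuinely different route from the paper's. You invoke a local Cheeger--Gromov--Taylor injectivity radius estimate --- precisely the alternative the authors flag in the remark preceding Proposition~\ref{injrad:march} (``Cheeger-Gromov-Taylor estimate the injectivity radius in a far more general setting in \cite{ChGrTa}'') but deliberately do not use. The paper instead gives a self-contained quantitative proof: the Christoffel-symbol control of Proposition~\ref{curvlemma:oct} plus Cauchy--Picard gives a uniform existence time for geodesics, and a Jacobi-field computation along parallel frames verifies the inverse-function-theorem criterion $\|\mathrm{Id}-D|_v(\exp_Q)\|_\delta<\tfrac{1}{2}$ on a ball of uniform size, so that $\exp_Q$ is a diffeomorphism there. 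Your route is shorter and outsources the hard analysis to \cite{ChGrTa}, and your handling of the incompleteness of $(\mathcal{A},g_{c,\Phi})$ --- converting the Euclidean buffer between $\mathcal{A}'$ and $\partial\mathcal{A}$ into a $g_{c,\Phi}$-buffer via the two-sided metric comparison --- is exactly the right fix; the paper's route buys independence from that reference and an explicit elementary derivation. One point to tighten in your write-up: Proposition~\ref{curvlemma:oct} does \emph{not} assert that $k_1$ and $k_2$ are individually universal, only that the ratios $k_1/k_2$ and $k_2/k_1$ are controllable (together with $g_{c,\Phi}\ge\delta$, so one may take $k_1\ge 1$); the constants $k_1,k_2$ themselves blow up when $\alpha_i/c$ or $\beta_i/c$ is large. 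Your argument survives because the three quantities actually fed into Cheeger--Gromov--Taylor are universal anyway: the curvature bound $K$ from item (4), an admissible radius (take $r_0=\tfrac{1}{4}\le\tfrac{1}{4}\sqrt{k_1}$, which guarantees $B_{g_{c,\Phi}}(u,r_0)\subset\mathcal{A}$), and the volume constant $v_0=\tfrac{4\pi}{3}\left(k_1/k_2\right)^{3/2}$, which is bounded below by the ratio control. But as written, ``universal positive constants $k_1,k_2$'' overstates what the cited proposition provides.
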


\begin{rmrk}
Note that the explicit bound, $m/(\sigma_i+c)\le 1$, given in Proposition~\ref{injrad:march} is not optimal, and has only been chosen for simplicity.   
\end{rmrk}

Before proving these propositions, we first prove a series of general lemmas.

\begin{lemma}\label{partial-r}
Let $\nu\in \mathbb{R}$. The $l$-th order partial derivatives of $\xi\mapsto |\xi|^\nu$ on $\mathbb{R}^3$ satisfy point-wise estimate
\be
|\partial^l (|\xi|^\nu)|\le C_{l,\nu} |\xi|^{\nu -l},
\ee
where the constant $C_{l,\nu}$ depends only on $l$ and $\nu$.
\end{lemma}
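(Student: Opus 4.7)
The plan is to exploit the fact that $f(\xi)=|\xi|^\nu$ is smooth on $\mathbb{R}^3\setminus\{0\}$ and positively homogeneous of degree $\nu$, and to deduce a corresponding homogeneity for its derivatives together with a bound on the unit sphere. The bound then propagates to all of $\mathbb{R}^3\setminus\{0\}$ by rescaling.

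First I would record the homogeneity. For any $t>0$ and $\xi\neq 0$, $f(t\xi)=t^\nu f(\xi)$. Differentiating this identity $l$ times in $\xi$ by the chain rule, and arguing by induction on $l$, yields
\be
\partial^l f(t\xi)=t^{\nu-l}\,\partial^l f(\xi)\quad\text{for all } t>0,\ \xi\neq 0,
\ee
so every component of $\partial^l f$ is positively homogeneous of degree $\nu-l$.

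Next I would use compactness of the unit sphere. Because $\partial^l f$ is smooth on $\mathbb{R}^3\setminus\{0\}$, the continuous function $\xi\mapsto |\partial^l f(\xi)|$ attains its maximum on $S^2=\{|\xi|=1\}$; call this maximum $C_{l,\nu}$ (it depends only on $l$ and $\nu$, since $f$ depends only on $\nu$). For arbitrary $\xi\neq 0$ apply the homogeneity identity above with $t=|\xi|$ and the unit vector $\xi/|\xi|$:
\be
|\partial^l f(\xi)|=|\xi|^{\nu-l}\,\bigl|\partial^l f(\xi/|\xi|)\bigr|\le C_{l,\nu}\,|\xi|^{\nu-l},
\ee
which is the claimed estimate.

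There is no real obstacle here; the only point to be careful about is that the induction step for the homogeneity relation is valid because $f$ is smooth away from the origin, so chain-rule differentiation is justified. The estimate is trivial at $l=0$ and scales correctly at each higher order, and the constant $C_{l,\nu}$ can be taken to be $\max_{|\xi|=1}|\partial^l f(\xi)|$, which depends only on $l$ and $\nu$ as required.
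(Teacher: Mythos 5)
Your proof is correct, but it takes a genuinely different route from the paper's. You exploit the positive homogeneity of $f(\xi)=|\xi|^\nu$: differentiating $f(t\xi)=t^\nu f(\xi)$ shows each $l$-th partial derivative is homogeneous of degree $\nu-l$, and the bound then follows from continuity of $\partial^l f$ on the compact unit sphere together with rescaling. The paper instead argues computationally: it first proves the case $\nu=1$ by induction on $l$, using the identity $0=\partial^l(|\xi|^2)=2|\xi|\,\partial^l(|\xi|)+\sum_{i=1}^{l-1}\binom{l}{i}\partial^i(|\xi|)\,\partial^{l-i}(|\xi|)$ for $l\ge 3$ to bootstrap the estimate, and then handles general $\nu$ by observing that $\partial^l(|\xi|^\nu)$ is a linear combination of terms $|\xi|^{\nu-k}\partial^{l_1}(|\xi|)\cdots\partial^{l_k}(|\xi|)$ with $l_1+\cdots+l_k=l$. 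Your argument is shorter and more conceptual, and it generalizes immediately to any function smooth away from the origin and positively homogeneous of some degree, in any dimension; the price is an appeal to compactness, so the constant $C_{l,\nu}=\max_{|\xi|=1}|\partial^l f(\xi)|$ is not produced explicitly. The paper's induction is more elementary and, in principle, tracks the constants through explicit recursions, though it never actually computes them either. Both proofs are complete and yield a constant depending only on $l$ and $\nu$, and both implicitly restrict to $\mathbb{R}^3\setminus\{0\}$, where the function is smooth.
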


\begin{proof}
We first prove the lemma in the case of $\nu=1$:
\be\label{basecase}
|\partial^l (|\xi|)|\le C(l)\cdot |\xi|^{1-l},
\ee
where the constant $C(l)$ depends only on $l$.
We do so by induction on $l$. A direct computation verifies the base cases $l\le 2$. For $l\ge 3$ the claim follows from the inductive hypothesis and 
\be
\partial^l(|\xi|^2)=2|\xi| \partial^l(|\xi|) +\sum_{i=1}^{i=l-1}{{l}\choose{i}}\partial^i(|\xi|)\partial^{l-i}(|\xi|). 
\ee
For general values of $\nu$ observe that the derivative $\partial^l(|\xi|^\nu)$ is a linear combination of terms of the form 
\be
|\xi|^{\nu-k}\partial^{l_1}(|\xi|)...\partial^{l_k}(|\xi|),\ \ 1\le k\le l
\ee
with coefficients which depend only on $\nu$ and positive integers $l_1$, ..., $l_k$ which satisfy $l_1+...+l_k=l$. The claim of our lemma is now a consequence of \eqref{basecase}.
\end{proof}

\begin{lemma}\label{derivbds:lemma}
Let $\varphi\ge 0$ be a smooth function defined on the annulus,
$\mathcal{A}$, defined in (\ref{annulus})
Let $a$ be a positive real number and let 
\be
f(u)=\ln(1+\tfrac{a}{|u|}+\varphi(u)).
\ee
For each integer value of $l\ge 0$ there exist polynomials $P_l$ whose (positive) coefficients are independent of $a$ and $\varphi$ such that 
\be
\|df\|_{C^l(\mathcal{A})}\le P_l(\|\varphi\|_{C^{l+1}(\mathcal{A})}).
\ee
\end{lemma}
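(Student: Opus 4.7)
The plan is to write $f = \ln h$ with
$$
h(u) := 1 + \frac{a}{|u|} + \varphi(u), \qquad h \ge \max\!\left(1,\; 1+\tfrac{a}{|u|}\right),
$$
and apply the Fa\`a di Bruno formula to differentiate the logarithm. For a multi-index $\alpha$ with $1 \le |\alpha| \le l+1$, this yields a representation
$$
\partial^\alpha f \;=\; \sum_{\substack{\beta_1+\cdots+\beta_k=\alpha \\ |\beta_j|\ge 1}} c_{k,(\beta_j)} \, \frac{\prod_{j=1}^{k}\partial^{\beta_j} h}{h^k},
$$
where the combinatorial constants $c_{k,(\beta_j)}$ are independent of $h$. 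Since each $|\beta_j|\ge 1$, the constants in $h$ drop out and $\partial^{\beta_j} h = \partial^{\beta_j}(a/|u|) + \partial^{\beta_j}\varphi$. Lemma~\ref{partial-r}, applied with $\nu = -1$, gives $|\partial^{\beta_j}(a/|u|)| \le a\, C_{|\beta_j|}\, |u|^{-1-|\beta_j|}$ for constants depending only on $|\beta_j|$.

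Next I expand each product $\prod_j \partial^{\beta_j} h$ over all subsets $S \subseteq \{1,\dots,k\}$ indexing the factors that contribute the $a/|u|$-term rather than the $\varphi$-term. The crucial step is the $a$-absorption identity: since $\varphi \ge 0$ we have $h \ge 1+a/|u|$ and $h \ge 1$, whence for any $|S|\le k$,
$$
\frac{a^{|S|}}{h^k} \;\le\; \frac{a^{|S|}}{(1+a/|u|)^{|S|}} \;=\; |u|^{|S|}\!\left(\frac{a/|u|}{1+a/|u|}\right)^{|S|} \;\le\; |u|^{|S|}.
$$
The factor $|u|^{|S|}$ cancels exactly the accumulated $|u|^{-|S|}$ coming from the $-1$ exponents in the bound on $\partial^{\beta_j}(a/|u|)$, leaving only $|u|^{-\sum_{j\in S}|\beta_j|}$, which on $\mathcal{A}$ is bounded above by $4^{|\alpha|} \le 4^{l+1}$ because $|u|\ge \tfrac14$.

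Collecting these estimates, each summand of $\partial^\alpha f$ is dominated by a constant depending only on $l$ and on the particular partition, times $\prod_{j\notin S}|\partial^{\beta_j}\varphi| \le \|\varphi\|_{C^{l+1}(\mathcal{A})}^{\,k-|S|}$. Summing over $1\le |\alpha|\le l+1$, over partitions of $\alpha$ and over subsets $S$ produces a polynomial $P_l$ with positive coefficients depending only on $l$ such that $\|df\|_{C^l(\mathcal{A})} \le P_l(\|\varphi\|_{C^{l+1}(\mathcal{A})})$, and the coefficients are manifestly independent of $a$ and $\varphi$. The main obstacle is the uniformity in $a$: naive differentiation of $\ln(1+a/|u|+\varphi)$ produces terms like $a^{|S|}|u|^{-|S|-\ast}$ that appear to blow up as $a\to\infty$, and the one subtle ingredient that makes the bound $a$-uniform is the elementary estimate $\tfrac{a/|u|}{1+a/|u|}\le 1$; everything else is combinatorial bookkeeping via Fa\`a di Bruno together with Lemma~\ref{partial-r}.
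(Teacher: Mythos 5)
Your proposal is correct and follows essentially the same route as the paper: both express the derivatives of $\ln h$ (with $h=1+\tfrac{a}{|u|}+\varphi$) as polynomial combinations of $\partial^\beta h/h$ — you via Fa\`a di Bruno, the paper via an induction on $\tilde f^{-1}\partial^i\tilde f$ with $\tilde f=e^f$ — control the $a/|u|$ contributions by Lemma~\ref{partial-r}, and obtain uniformity in $a$ from the same elementary fact that $a/h\le a/(1+a/|u|)\le |u|\le 4$ on $\mathcal{A}$. Your subset expansion over $S$ is just a more explicit bookkeeping of what the paper compresses into the bound $\bigl|\partial^i\tilde f/\tilde f\bigr|\le 4(1+c_i)+\|\varphi\|_{C^i(\mathcal{A})}$.
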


\begin{proof}
Consider the function $\tilde{f}=e^f$. A straightforward induction argument shows that the components of the $l$-th derivatives of $df$ are polynomials in $\tilde{f}^{-1}\partial^i \tilde{f}$ whose coefficients depend only on $l$. Thus, it suffices to prove point-wise bounds on $\tilde{f}^{-1}\partial^i \tilde{f}$ in terms of 
$\|\varphi\|_{C^{i}(\mathcal{A})}$ and constants which depend only on $i$.
 
By virtue of the fact that the annulus $\mathcal{A}$ is compact and bounded away from the origin we know that there is a constant $c_i$ depending only on $i$ such that 
\be
|\partial^i\tilde{f}|\le 1+c_ia+\|\varphi\|_{C^{i}(\mathcal{A})}.
\ee
Thus, 
\be
\left|\frac{\partial^i \tilde{f}}{\tilde{f}}\right|\le \frac{1+c_i a}{1+a/4}+\|\varphi\|_{C^{i}(\mathcal{A})}\le 4(1+c_i)+\|\varphi\|_{C^{i}(\mathcal{A})}.
\ee
This completes our proof. 
\end{proof}

\begin{lemma}\label{varphi:ests}
Assume there is a positive length, $c>0$, that satisfies (\ref{choice-c-ann}) and consider the scaled pullback metric $g_{c,\Phi}$ 
on $\mathcal{A}$
defined in \eqref{scaledpullback:defn}--\eqref{pullbackformula}.
Then for every integer $l\ge 0$ there exists a constant $C_l$ which depends only on $l$ such that 
\be
\|\varphi_\alpha\|_{C^{l}(\mathcal{A})}, \|\varphi_\beta\|_{C^{l}(\mathcal{A})}
\le C_l \tfrac{m}{\sigma_i + c}.
\ee
\end{lemma}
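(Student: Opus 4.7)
The plan is to reduce the estimate to a point-wise lower bound on $|cu+p_i-p_j|$ for $u\in\mathcal{A}$ and $j\neq i$, then apply Lemma~\ref{partial-r} term by term and sum. The first step, which I expect to be the main obstacle, is to establish a uniform estimate of the form
\be
|cu+p_i-p_j|\ge c_0\,(\sigma_i+c) \qquad \text{for all } u\in\mathcal{A},\ j\neq i,
\ee
with a universal constant $c_0>0$. The dichotomy (\ref{choice-c-ann}) is what makes this possible, and one must treat the two alternatives separately: if $\sigma_i>5c$, then $|cu|\le 4c<\tfrac{4}{5}\sigma_i$ gives the reverse triangle bound $|cu+p_i-p_j|\ge |p_i-p_j|-|cu|\ge \sigma_i-4c\ge \tfrac{1}{5}\sigma_i$, and then $\sigma_i+c<\tfrac{6}{5}\sigma_i$ upgrades this to a lower bound in terms of $\sigma_i+c$; if instead $\sigma_i^{\mathrm{out}}<\tfrac{c}{5}$, then $|p_i-p_j|\le\tfrac{c}{5}<|cu|$ on $\mathcal{A}$ and a symmetric calculation gives the bound in terms of $c$, hence in terms of $\sigma_i+c$ since now $c$ dominates $\sigma_i$. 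Carefully tracking these constants so that the final bound is phrased in the common quantity $\sigma_i+c$ is the only delicate point.

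Once that lower bound is in hand, the $l=0$ case is immediate:
\be
\varphi_\alpha(u)=\sum_{j\neq i}\frac{\alpha_j}{|cu+p_i-p_j|}\le \frac{1}{c_0(\sigma_i+c)}\sum_{j\neq i}\alpha_j\le \frac{m}{c_0(\sigma_i+c)},
\ee
using (\ref{ADM}) in the form $\sum_{j\neq i}\alpha_j\le m$, and similarly for $\varphi_\beta$.

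For higher derivatives, I would apply Lemma~\ref{partial-r} with $\nu=-1$ to the function $\xi\mapsto |\xi|^{-1}$, composed with the affine map $u\mapsto cu+p_i-p_j$. The chain rule produces a factor of $c^l$, so
\be
|\partial^l_u(|cu+p_i-p_j|^{-1})|\le C_l\,c^l\,|cu+p_i-p_j|^{-(l+1)}\le \frac{C_l\,c^l}{c_0^{\,l+1}(\sigma_i+c)^{l+1}}.
\ee
The key algebraic observation is then that $c^l/(\sigma_i+c)^{l+1}\le 1/(\sigma_i+c)$, because $c\le \sigma_i+c$. Summing over $j\neq i$ and absorbing the $j$-independent constants yields
\be
\|\partial^l\varphi_\alpha\|_{C^0(\mathcal{A})}\le \frac{C_l}{c_0^{\,l+1}}\cdot\frac{m}{\sigma_i+c},
\ee
which is the required bound (after relabeling the constant), and the same argument applies verbatim to $\varphi_\beta$. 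The lemma then follows by passing from the individual partial derivative bounds to a $C^l$ norm bound in the standard way.
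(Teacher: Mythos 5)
Your proposal is correct and follows essentially the same route as the paper: the same case analysis of the dichotomy (\ref{choice-c-ann}) to get a lower bound on $|cu+p_i-p_j|$ comparable to $\sigma_i+c$ (the paper uses the constant $\tfrac{1}{40}$), the same application of Lemma~\ref{partial-r} to each summand with the factor $c^l$ from the affine substitution, and the same observation that $\left(c/(\sigma_i+c)\right)^l\le 1$ combined with $\sum_{j\neq i}\alpha_j\le m$.
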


\begin{proof}
As a consequence of Lemma \ref{partial-r} we have that  
\be \label{varphi:ests:1}
\left|\partial_u^l \left(\frac{\alpha_j}{|cu+p_i-p_j|}\right)\right|\le C'_l\frac{\alpha_j c^l}{|cu+p_i-p_j|^{l+1}},
\ee
where the constant $C'_l$ depends only on $l$. 

There are two cases in our hypothesis that $c$ satisfies (\ref{choice-c-ann}).
In the case where $\sigma_i  > 5c$, we have $|p_i-p_j|>5c$ for all $j\neq i$,
and so we have 
\be
|cu+p_i-p_j|\ge |p_i-p_j|-c|u|\ge \sigma_i - 4c>
\sigma_i - \tfrac{4\sigma_i}{5}=\tfrac{\sigma_i}{5}> \tfrac{\sigma_i+c}{10}
> \tfrac{\sigma_i+c}{40}.
\ee
In the case where $\sigma_i^{out} < \tfrac{1}{5}c$ we have
$|p_i-p_j|< c/5$ and so we have
\be
|cu+p_i-p_j|\ge c|u|-|p_i-p_j| \ge \tfrac{c}{4} - \tfrac{c}{5}=\tfrac{c}{20}
> \tfrac{\sigma_i+c}{40}
\ee
because $\sigma_i<\sigma_i^{out}$.

In both cases, it follows from (\ref{varphi:ests:1}) and $\sum_j \alpha_j<m$ 
that 
\be\label{rndbound}
\left|\partial^l_u \varphi_\alpha(u)\right|
\le C'_l \sum_{j\neq i} \alpha_j\frac{c^l}{\,\,\,\,\left(\frac{c+\sigma_i}{40}\right)^{l+1}}
< C'_l  \left(\tfrac{m}{\sigma_i+c}\right)\left(\tfrac{c}{\sigma_i+c}\right)^l
< C'_l \left(\tfrac{m}{\sigma_i+c}\right).
\ee
The same argument applies to $\varphi_\beta$. 
\end{proof}

We now prove Proposition~\ref{curvlemma:oct}:

\begin{proof}[Proof of Proposition~\ref{curvlemma:oct}]

The first claim that $g_{c,\Phi}\ge \delta$ is immediate from \eqref{pullbackformula}.
 
By (\ref{pullbackformula}), the fact that $\varphi_\alpha>0$ and $\varphi_\beta>0$, the fact that $4>|u|>1/4$ on $\mathcal{A}$,
and Lemma \ref{varphi:ests} with $l=0$ we have
\be
\left(1+\tfrac{\alpha_i}{4c}\right)^2
\left(1+\tfrac{\beta_i}{4c}\right)^2\delta \le g_{c,\Phi} \le  \left(1+4\tfrac{\alpha_i}{c}+C_0\tfrac{m}{\sigma_i+c}\right)^2
\left(1+4\tfrac{\beta_i}{c}+C_0\tfrac{m}{\sigma_i+c}\right)^2\delta.
\ee
Thus we have the second claim of 
Proposition~\ref{curvlemma:oct}. 

To prove the remaining claims we express 
$g_{c,\Phi}$ in the form of $e^{2f}\delta$ where 
\be
f(u)=\ln\left(1+\frac{\alpha_i/c}{|u|}+\varphi_\alpha(u)\right)+\ln\left(1+\frac{\beta_i/c}{|u|}+\varphi_\beta(u)\right)
\ee
for functions $\varphi_\alpha$ and $\varphi_\beta$ of Lemma \ref{varphi:ests}. In fact, by applying Lemmas \ref{derivbds:lemma} and \ref{varphi:ests} we see that there exist polynomials $P_l$ whose (positive) coefficients depend only on $l$ such that 
\be
\|df\|_{C^l(\mathcal{A})}\le P_l\left(\tfrac{m}{\sigma_i+c}\right).
\ee
The claim about the Christoffel symbols of $g_{c,\Phi}$ is now immediate from the fact that $\Gamma_{ij}^k$ can be expressed in terms of components of $df$. 
This also means that the components $R_{ijk}^l$ of the Riemann curvature tensor are bounded by a (universal) polynomial in $\frac{m}{\sigma_i+c}$. 
Since $g_{c,\Phi}^{-1}\le \delta^{-1}$, 
the same applies to the sectional curvatures of $g_{c,\Phi}$.
\end{proof}

We now prove Proposition~\ref{injrad:march}:

\begin{proof}[Proof of Proposition~\ref{injrad:march}]
It follows from Proposition \ref{curvlemma:oct} 
that the Christoffel symbols of $g_{c,\Phi}$ are bounded over $\mathcal{A}$, together with all of their derivatives. Thus, the Cauchy-Picard Theorem implies the uniform time of existence $T$ for all geodesics $\gamma$ 
of $g_{c,\Phi}$ with 
\be
\gamma(0)\in \mathcal{A}',\ \ \ \|\gamma'(0)\|_\delta \le 1.
\ee
In particular, we know that for each $Q\in \mathcal{A}'$ the 
mapping $\exp_Q$ is defined on
\be
B_\delta(0,T)\supseteq B_{g_{c,\Phi}}(0,T).
\ee
The fact that for each such $Q$ the 
mapping $\exp_Q$ is a local diffeomorphism follows from the 
Inverse Function Theorem. In fact, the proof of the Inverse 
Function Theorem shows that if 
\begin{equation}\label{uniformity-inv-func-thm}
\|\mathrm{Id} - D|_v(\exp_Q)\|_\delta <\tfrac{1}{2}
\end{equation}
for all $v\in B_\delta (Q,2r_*)$ then $\exp_Q$ maps diffeomorphically 
onto the ball $B_\delta (Q, r_*)$. 

We proceed by showing that a radius $r_0$ can be 
chosen independently of $Q$ so that the estimate \eqref{uniformity-inv-func-thm} holds for all $v\in B_{g_{c,\Phi}} (Q,2r_0)$. 
Since 
\be
B_\delta(Q, 2r_0/k_2)\subseteq B_{g_{c,\Phi}}(Q,2r_0) \text{\ \ and\ \ } B_{g_{c,\Phi}}(Q,k_1r_0/k_2)\subseteq B_\delta (Q,r_0/k_2)
\ee
for the constants $k_1$ and $k_2$ addressed in Proposition \ref{curvlemma:oct}, the estimate \eqref{uniformity-inv-func-thm}  and
the proof of the Implicit Function Theorem imply that $\exp_Q$ is a diffeomorphism on $B_{g_{c,\Phi}}(Q,k_1r_0/k_2)$. This observation makes the claim of our proposition a consequence of the fact that \eqref{uniformity-inv-func-thm} holds for all $v\in B_{g_{c,\Phi}} (Q,2r_0)$.
\bigbreak
As $D|_v(\exp_Q)$ is identity on the span of $v$, it suffices to 
study 
$D|_v(\exp_Q)$ on the orthogonal complement of $v$. 
There the mapping is given by the Jacobi vector fields 
$Y$ along the $g_{c,\Phi}$-unit speed geodesic 
$\gamma(t)=\exp_Q(t\cdot \tfrac{1}{\|v\|}v)$:
\be
D|_v(\exp_Q): w\mapsto \tfrac{1}{\|v\|} Y(\|v\|),\ \ Y(0)=0,\ \ Y'(0)=w.
\ee
Note that here $\|v\|$ is taken with respect to $g_{c,\Phi}$. 
For the purposes of addressing \eqref{uniformity-inv-func-thm} 
it suffices to work with $w$ which are unit with respect to $\delta$. 
Note that we then have 
\be
k_1\le \|w\|_{g_{c,\Phi}}\le k_2.
\ee

Let $W$ be the $g_{c,\Phi}-$parallel transport of $w$ along $\gamma$ and 
let $\{E_1,E_2, E_3\}$ be any $g_{c,\Phi}-$parallel orthonormal 
frame along $\gamma$. Since the Christoffel symbols of 
$g_{c,\Phi}$ are controlled by $m/(\sigma_i+c) \le 1$ 
with all their derivatives 
and since $\|w\|_\delta=1$, there is a time of existence $T'$ 
(which is independent of our choices of $v$ and $w$) such 
that on $[0,T']$ we have
\be
\|w-W\|_\delta \le \tfrac{1}{4}.
\ee

For $1\le i\le 3$ define the auxiliary functions 
\be
F_i(t)=g_{c,\Phi}(Y-tW, E_i).
\ee
Note that $F_i(0)=F_i'(0)=0$. The Jacobi equation implies 
\begin{eqnarray}
F_i''(t)&=&-R_{g_{c,\Phi}}(Y,\gamma',\gamma',E_i)\\
&=&
-R_{g_{c,\Phi}}(Y-tW,\gamma',\gamma',E_i)-tR_{g_{c,\Phi}}(W,\gamma',\gamma',E_i).
\end{eqnarray}
Temporarily fix a value of $0<t_0<T$. By Proposition~\ref{curvlemma:oct}
the sectional 
curvatures of $g_{c,\Phi}$ are controllable by $m/(\sigma_i+c)<1$, 
and so the same applies to the Jacobi operators 
$R_{g_{c,\Phi}}(., \gamma')\gamma'$. In particular, 
the fact that $\|E_i\|_{g_{c,\Phi}}=1$ and $\|W\|_{g_{c,\Phi}}\le k_2$ 
along $\gamma(t)$, implies  
\be
|F_i''(t)|\le \kappa \left(\sup_{t\le t_0} \|Y-tW\|_{g_{c,\Phi}}+k_2 t\right),\ \ 0\le t\le t_0
\ee
where $\kappa$ denotes the bound on the norms of the 
Jacobi operators $R_{g_{c,\Phi}}(., \gamma', \gamma',E_i)$. 
Upon integration we obtain 
\be
|F_i(t)|\le \kappa \left(t^2 (\sup_{t\le t_0} \|Y-tW\|_{g_{c,\Phi}})+\tfrac{1}{3}k_2 t^3\right).
\ee
As $\|Y-tW\|_{g_{c,\Phi}}\le \sum_{i=1}^3 |F_i(t)|$ we obtain 
\be
\|Y-tW\|_{g_{c,\Phi}}\le 3\kappa (t^2 (\sup_{t\le t_0} \|Y-tW\|_{g_{c,\Phi}})+\tfrac{1}{3}k_2 t^3).
\ee
Under the assumption of $1-3\kappa t_0^2\ge \tfrac{1}{2}$, 
i.e $t_0\le \tfrac{1}{\sqrt{6\kappa}}$, 
and after taking the supremum over $0\le t\le t_0$, 
we arrive at 
\be
\sup_{t\le t_0}\|Y-tW\|_{g_{c,\Phi}}\le 2\kappa k_2 t_0^3.
\ee
Since 
\be
\|Y-t_0W\|_{\delta}\le \sup_{t\le t_0}\|Y-tW\|_{\delta}\le \tfrac{1}{k_1}\sup_{t\le t_0}\|Y-tW\|_{g_{c,\Phi}}
\ee 
and since $t_0<T$ was arbitrary we see that 
\be
\|Y-tW\|_{\delta}\le 2\kappa \tfrac{k_2}{k_1} t^3 \text{\ \ i.e\ \ }
\left\|\tfrac{1}{t}Y(t) -W\right\|_\delta < 2\kappa \tfrac{k_2}{k_1}t^2
\ee
for all $0\le t<\min\{\frac{1}{\sqrt{6\kappa}}, T\}$. 
If in addition $t<T'$, we have 
\be
\|w-\tfrac{1}{t}Y(t)\|_\delta \le 2\kappa \tfrac{k_2}{k_1}t^2+\tfrac{1}{4}.
\ee
It follows that so long as $\|v\|_{g_{c,\Phi}}$ is such that 
\be
0\le \|v\|_{g_{c,\Phi}}<\min\{\tfrac{1}{\sqrt{6\kappa}}, T, T'\}
\ee
and 
\be
2\kappa \tfrac{k_2}{k_1} \|v\|_{g_{c,\Phi}}^2<\tfrac{1}{4}
\ee
 the estimate \eqref{uniformity-inv-func-thm} is fulfilled. The  boundedness of $\tfrac{k_2}{k_1}$ (see Proposition \ref{curvlemma:oct}) implies that the estimate \eqref{uniformity-inv-func-thm} holds for all $v\in B_{g_{c,\Phi}} (Q,2r_0)$ where $r_0$ can be chosen independently of $Q$. This completes our proof. 
\end{proof}

\subsection{The Area of the Minimal Surface} \label{CM:review}

Here we use the estimates in the prior subsection
combined with Colding-Minicozzi's monotonicity formula for the area of a minimal surface to
prove the following theorem depicted in Figure~\ref{fig-S-new}:

\begin{thm}\label{rmrk-s_0}
Fix a length $c>0$.  Let $(M,g)$ be a geometrostatic manifold 
such that for all $i=1,2,...,n$ we have
\be 
\frac{m}{\sigma_i+c}\le 1, 
\ee
and
\be
\qquad \sigma_i> 5c \qquad \textrm{ or }\qquad \sigma_i^{out} < \frac{c}{5}.
\ee
Consider the scaled pullback metric $g_{c,\Phi}$ on
the Euclidean annulus, $\mathcal{A}$, as 
defined in \eqref{scaledpullback:defn}--\eqref{pullbackformula}. 
Then there is an $s_0>0$ which is 
independent of the choice of our geometrostatic manifold 
satisfying the conditions above such that
for any smooth connected $g_{c,\Phi}$-minimal surface 
$\Sigma$ in $\mathcal{A}$
with  
\be
\partial \Sigma \subseteq \partial B_\delta(0, 2) \cup \partial B_\delta(0, 1)
\ee
that contain points
\be
q \in \partial \Sigma \,\cap \, \partial B_\delta(0, 2)
\textrm{ and } q' \in \partial \Sigma \,\cap \, \partial B_\delta(0, 1)
\ee
must satisfy  
\be \label{CMcoreq}
\mathrm{Area}_{g_{c, \Phi}}(\Sigma)\geq (\pi e^{-2}) s_0^2.
\ee
\end{thm}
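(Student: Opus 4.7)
The plan is to prove the area lower bound via the Colding--Minicozzi monotonicity formula for minimal surfaces, applied at a uniformly interior point of $\Sigma$ whose ambient geometry is universally controlled by Propositions \ref{curvlemma:oct} and \ref{injrad:march}.

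First I would exploit connectedness to produce a good base point. Since $\Sigma$ is connected and meets both $\partial B_\delta(0,1)$ and $\partial B_\delta(0,2)$, the continuous function $x\mapsto |x|$ attains every value in $[1,2]$ on $\Sigma$ by the intermediate value theorem; so I fix $p\in \Sigma$ with $|p|=3/2$. Then $p\in \mathcal{A}'=\{1/2\le |u|\le 2\}$, and the Euclidean ball $B_\delta(p,1/2)$ is contained in $\{1\le |u|\le 2\}\subset \mathcal{A}'$ and is disjoint from $\partial\Sigma\subseteq \partial B_\delta(0,1)\cup \partial B_\delta(0,2)$. Because Proposition \ref{curvlemma:oct}(1) gives $g_{c,\Phi}\ge \delta$, one has $B_{g_{c,\Phi}}(p,r)\subseteq B_\delta(p,r)$ for every $r$, so the $g_{c,\Phi}$-geodesic ball of radius $r\le 1/2$ about $p$ stays inside $\mathcal{A}'$ and avoids $\partial\Sigma$.

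Next I would invoke the uniform geometric bounds. Under the hypothesis $m/(\sigma_i+c)\le 1$, Proposition \ref{curvlemma:oct}(4) furnishes a universal constant $K_0$ bounding the absolute values of sectional curvatures of $g_{c,\Phi}$, and Proposition \ref{injrad:march} furnishes a universal lower bound $i_0>0$ for the injectivity radius on $\mathcal{A}'$. The Colding--Minicozzi monotonicity formula for a smooth $2$-dimensional minimal surface in a $3$-manifold with sectional curvature bounded in absolute value by $K_0$ then yields, at any interior $p\in \Sigma$ with $B_{g_{c,\Phi}}(p,r)\cap \partial\Sigma=\emptyset$ and $r$ less than both $i_0$ and $\pi/(2\sqrt{K_0})$,
\begin{equation*}
\mathrm{Area}_{g_{c,\Phi}}\!\bigl(\Sigma \cap B_{g_{c,\Phi}}(p,r)\bigr)\;\ge\;\pi r^2 \, e^{-CK_0 r^2}
\end{equation*}
for a universal constant $C$. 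Setting
\begin{equation*}
s_0:=\min\!\left\{\tfrac{1}{2},\ i_0,\ \tfrac{\pi}{2\sqrt{K_0}},\ \sqrt{2/(CK_0)}\right\}>0
\end{equation*}
ensures $CK_0 s_0^2\le 2$ and preserves all the hypotheses, giving
\begin{equation*}
\mathrm{Area}_{g_{c,\Phi}}(\Sigma) \;\ge\;\mathrm{Area}_{g_{c,\Phi}}\!\bigl(\Sigma \cap B_{g_{c,\Phi}}(p,s_0)\bigr)\;\ge\;\pi e^{-2} s_0^2 .
\end{equation*}
Since every ingredient is universal, $s_0$ depends neither on the particular geometrostatic manifold nor on $\Sigma$.

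The hard part is not any single step but the bookkeeping required to keep $s_0$ universal: one must verify that the monotonicity constant $C$ depends only on a sectional curvature bound and the dimension (which is where one truly needs the full Riemann tensor control of Proposition \ref{curvlemma:oct}(4), not merely Ricci or scalar control), and that the four constraints on $s_0$ above can be met simultaneously by a single positive number determined by universal data. The inclusion $g_{c,\Phi}\ge \delta$ together with the choice $|p|=3/2$ and $s_0\le 1/2$ takes care of the disjointness of $B_{g_{c,\Phi}}(p,s_0)$ from $\partial \Sigma$, closing the argument.
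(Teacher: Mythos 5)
Your proposal is correct and follows essentially the same route as the paper: the paper also picks a base point on $\Sigma\cap\partial B_\delta(0,3/2)$ via connectedness, uses $g_{c,\Phi}\ge\delta$ to keep a fixed-radius ball inside the annulus and away from $\partial\Sigma$, invokes Propositions~\ref{curvlemma:oct} and~\ref{injrad:march} for universal curvature and injectivity radius bounds, and applies the Colding--Minicozzi monotonicity formula with a universal $s_0$ (the paper packages this last step as Proposition~\ref{CMcor}, with $s_0=\tfrac12\min\{i_0,1/\sqrt{\kappa},1/3\}$). The only cosmetic difference is the form of the exponential factor in the monotonicity inequality.
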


\begin{figure}[h] 
\begin{tikzpicture}[scale=0.8]
\draw[fill=lightgray] (0.92, 0.3) to [out=-100, in =100] (0.87, -0.7) to [out=-30, in= -120] (3.75,-1.5) to [out=100, in=-120] (4,0) to [out=150, in=0] (0.92, 0.3);
\draw[thick, red] (-1,2.6) to [out=-30, in=90] (1,0) to [out=-90, in=30] (-1,-2.6);
\draw[thick, blue] (1.92,3.92) to [out=-45, in=90] (4,0) to [out=-90, in=60] (3.1,-3);
\node[left] at (2.1, 3.4) {$\textcolor{blue}{\partial B_\delta(0, 2)}$};
\node[left] at (-0.3, 1.9) {$\textcolor{red}{\partial B_\delta(0, 1)}$};
\draw[red, thick] (0.92, 0.3) to [out=-100, in =100] (0.87, -0.7);
\draw[blue, thick] (4, 0) to [out=-120, in =100] (3.75, -1.5);
\node at (2.3, -0.5) {$\Sigma$};
\node at (-2, -0.5) {$0$};
\draw[fill] (-2,0) circle(0.08);
\node at (3.3, -1.5) {$q$};
\draw[fill] (3.75, -1.5) circle(0.08);
\node at (1.3, 0.7) {$q'$};
\draw[fill] (0.92, 0.3) circle(0.08);
\end{tikzpicture}
   \caption{The minimal surface, $\Sigma$, of Theorem~\ref{rmrk-s_0}.}
   \label{fig-S-new}
\end{figure}
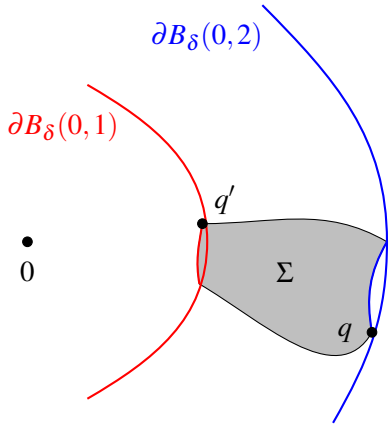

Before we prove the theorem we recall the following
 monotonicity formula (7.5) from Colding \& Minicozzi's 
textbook \cite{CoMinCourse}. 

\begin{thm}[Monotonicity Formula]\label{CM-JJ:thm}
Let $x_0$ be a point on a smooth minimal surface $\Sigma$ in a $3$-manifold $(M,g)$. Let $\kappa>0$ be a bound on sectional curvatures $K_M$ on $M$ (as in $|K_M|<\kappa$) and let $i_0>0$ denote a positive lower bound on the injectivity radius on $M$. Then the function  
\be
e^{2\sqrt{\kappa}s} s^{-2} \mathrm{Area}(B_{s,g}(x_0)\,\cap \, \Sigma)
\ee
of $0<s<\min\{i_0,\tfrac{1}{\sqrt{\kappa}}, \mathrm{dist}(x_0, \partial \Sigma)\}$ is non-decreasing. 
\end{thm}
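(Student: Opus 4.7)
The plan is to apply the Colding--Minicozzi monotonicity formula (Theorem~\ref{CM-JJ:thm}) to the minimal surface $\Sigma$ at a carefully chosen interior point, using the uniform curvature bound from Proposition~\ref{curvlemma:oct} and the uniform injectivity radius bound from Proposition~\ref{injrad:march}. Concretely, Proposition~\ref{curvlemma:oct} (with $m/(\sigma_i+c)\le 1$) gives a universal bound $\kappa$ on $|K_{g_{c,\Phi}}|$ over all of $\mathcal{A}$, and Proposition~\ref{injrad:march} gives a universal injectivity radius lower bound $i_0$ on the smaller annulus $\mathcal{A}'=\{1/2\le |u|\le 2\}$. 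Together these allow the monotonicity formula to be invoked at any point of $\mathcal{A}'$ with a uniform admissible radius.

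First I would locate an interior basepoint. Since $\Sigma$ is connected, the continuous function $u\mapsto |u|$ restricted to $\Sigma$ takes the values $1$ at $q'$ and $2$ at $q$, so by the intermediate value theorem there exists $x_0\in\Sigma$ with $|x_0|=3/2$. In particular $x_0\in\mathcal{A}'$, so the injectivity radius of $g_{c,\Phi}$ at $x_0$ is at least $i_0$. Since $\partial\Sigma\subseteq \partial B_\delta(0,1)\cup \partial B_\delta(0,2)$, the Euclidean distance from $x_0$ to $\partial\Sigma$ is at least $1/2$; by the inequality $g_{c,\Phi}\ge \delta$ from Proposition~\ref{curvlemma:oct}, the same holds for the $g_{c,\Phi}$-distance, that is $\mathrm{dist}_{g_{c,\Phi}}(x_0,\partial\Sigma)\ge \tfrac{1}{2}$.

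Next I would fix a universal radius $s_0>0$ satisfying
\[
s_0 \le \min\left\{\,i_0,\ \tfrac{1}{\sqrt{\kappa}},\ \tfrac{1}{2}\right\}.
\]
Note $s_0$ depends only on the universal constants $\kappa$ and $i_0$, not on the particular geometrostatic manifold. By the choice of $x_0$, the monotonicity formula applies to $\Sigma$ at $x_0$ throughout the range $0<s\le s_0$. Thus the quantity
\[
e^{2\sqrt{\kappa}\,s}\, s^{-2}\, \mathrm{Area}_{g_{c,\Phi}}\!\bigl(B_{s,g_{c,\Phi}}(x_0)\cap \Sigma\bigr)
\]
is non-decreasing on $(0,s_0]$. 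Letting $s\to 0^+$, since $\Sigma$ is a smooth embedded minimal surface, the standard asymptotic gives
\[
\lim_{s\to 0^+} s^{-2}\,\mathrm{Area}_{g_{c,\Phi}}\!\bigl(B_{s,g_{c,\Phi}}(x_0)\cap \Sigma\bigr) = \pi.
\]
Combining the monotonicity with this limit and the bound $2\sqrt{\kappa}\,s_0\le 2$ gives
\[
\mathrm{Area}_{g_{c,\Phi}}(\Sigma) \,\ge\, \mathrm{Area}_{g_{c,\Phi}}\!\bigl(B_{s_0,g_{c,\Phi}}(x_0)\cap \Sigma\bigr) \,\ge\, \pi\, e^{-2\sqrt{\kappa}\,s_0}\, s_0^2 \,\ge\, (\pi e^{-2}) s_0^2,
\]
which is exactly \eqref{CMcoreq}.

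The only mildly subtle point, and the one I would spend care on, is ensuring that the choice of $s_0$ is genuinely universal across all admissible geometrostatic manifolds: this requires quoting that $\kappa$ and $i_0$ in Propositions~\ref{curvlemma:oct} and~\ref{injrad:march} depend only on the quantities $m/(\sigma_i+c)\le 1$ and on the fixed annulus $\mathcal{A}$, not on $c$, $P$, or the particular conformal factors. Given that, the remaining ingredients (connectedness of $\Sigma$, the intermediate value argument, and the standard Euclidean asymptotic $s^{-2}\,\mathrm{Area}(B_s\cap \Sigma)\to \pi$ for smooth minimal surfaces) are routine and the lower bound follows immediately.
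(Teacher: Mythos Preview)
Your proposal does not prove the stated theorem. Theorem~\ref{CM-JJ:thm} is the Colding--Minicozzi monotonicity formula itself, and your argument \emph{assumes} it rather than establishes it: you invoke it at the point $x_0$ to derive the area lower bound~\eqref{CMcoreq}. The paper does not prove Theorem~\ref{CM-JJ:thm} either; it is quoted without proof from the textbook \cite{CoMinCourse} as formula~(7.5) there.

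What you have actually written is a proof of Theorem~\ref{rmrk-s_0} (whose conclusion is~\eqref{CMcoreq}). In that capacity your argument is correct and follows the same route as the paper. The paper factors the work into two pieces: Proposition~\ref{CMcor} (choose $x_0$ on the sphere $|u|=3/2$ via connectedness, use $g\ge\delta$ to bound the $g$-distance from $x_0$ to $\partial\Sigma$ below by $1/2$, and apply monotonicity with $s_0=\tfrac{1}{2}\min\{i_0,1/\sqrt{\kappa},1/3\}$), followed by the short proof of Theorem~\ref{rmrk-s_0} (observe that the hypotheses put us in the setting of Propositions~\ref{curvlemma:oct} and~\ref{injrad:march}, which furnish uniform $\kappa$ and $i_0$, so $s_0$ is independent of the particular manifold). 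You have merged these two steps into one, but the content is identical.
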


For smooth minimal surfaces the function in Theorem \ref{CM-JJ:thm} converges to $\pi$ as $s\to 0$. Consequently, the monotonicity formula gives us an inequality of the form 
\begin{equation}\label{lowerbound:minsurf}
\mathrm{Area}(B_{s,g}(x_0)\,\cap \, \Sigma)\geq (\pi e^{-2\sqrt{\kappa}s}) s^2\geq (\pi e^{-2}) s^2
\end{equation}
on the interval for $s$ stated in the theorem. 

Following an idea used by Jauregui in \cite{Jaur-lower}, we can use this 
monotonicity formula
to provide a lower bound for the area of a minimal surface which runs between two spheres as follows:

\begin{prop}\label{CMcor}
Let $g\ge \delta$ be a metric on $B_\delta(0, 4)\setminus B_\delta(0, \tfrac{1}{2})$ whose sectional curvatures are bounded by $\kappa$ and whose injectivity radii on $B_\delta(0, 2) \cup \partial B_\delta(0, 1)$ are bounded from below by $i_0$. Let $s_0=\tfrac{1}{2}\min\{i_0,\tfrac{1}{\sqrt{\kappa}},\tfrac{1}{3}\}$. 
Then all smooth connected minimal surfaces $\Sigma$ with  
\be
\partial \Sigma \subseteq \partial B_\delta(0, 2) \cup \partial B_\delta(0, 1)
\ee
that contain points
\be
q \in \partial \Sigma \,\cap \, \partial B_\delta(0, 2)
\textrm{ and } q' \in \partial \Sigma \,\cap \, \partial B_\delta(0, 1)
\ee
must satisfy  
\be 
\mathrm{Area}_g(\Sigma)\geq (\pi e^{-2}) s_0^2.
\ee
\end{prop}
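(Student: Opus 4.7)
The plan is to apply the Colding--Minicozzi monotonicity formula (Theorem~\ref{CM-JJ:thm}) at a carefully chosen interior point $x_0\in\Sigma$ lying at Euclidean radius $\tfrac{3}{2}$, exploiting the hypothesis $g\ge\delta$ to convert Euclidean control on the position of $x_0$ into $g$-distance estimates sufficient to apply the formula on a $g$-ball of radius $s_0$.

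First I would locate such an $x_0$. Since $\Sigma$ is connected and the Euclidean norm $x\mapsto|x|$ takes the value $1$ at $q'$ and $2$ at $q$, the intermediate value theorem produces a point $x_0\in\Sigma$ with $|x_0|=\tfrac{3}{2}$. Because $\partial\Sigma\subseteq\partial B_\delta(0,2)\cup\partial B_\delta(0,1)$, this point satisfies $\mathrm{dist}_\delta(x_0,\partial\Sigma)\ge\tfrac{1}{2}$. The inequality $g\ge\delta$ gives $L_g(\gamma)\ge L_\delta(\gamma)$ for every path and hence $\mathrm{dist}_g\ge\mathrm{dist}_\delta$, so $\mathrm{dist}_g(x_0,\partial\Sigma)\ge\tfrac{1}{2}$. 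The same comparison yields the inclusion $B_{s_0,g}(x_0)\subseteq B_{s_0,\delta}(x_0)$, and since $s_0\le\tfrac{1}{6}$ this Euclidean ball sits inside $B_\delta(0,2)\setminus B_\delta(0,1)$, where the assumed curvature bound $\kappa$ and injectivity radius bound $i_0$ are in force.

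Next I would verify the hypotheses of Theorem~\ref{CM-JJ:thm} at $s=s_0$, namely $s_0<\min\{i_0,1/\sqrt{\kappa},\mathrm{dist}_g(x_0,\partial\Sigma)\}$. The first two bounds follow directly from the definition $s_0=\tfrac{1}{2}\min\{i_0,1/\sqrt{\kappa},\tfrac{1}{3}\}$, and the third from $s_0\le\tfrac{1}{6}<\tfrac{1}{2}\le\mathrm{dist}_g(x_0,\partial\Sigma)$. Since $\Sigma$ is smooth, the monotone quantity $e^{2\sqrt{\kappa}s}s^{-2}\mathrm{Area}_g(B_{s,g}(x_0)\cap\Sigma)$ tends to $\pi$ as $s\to 0^+$, so monotonicity yields
\begin{equation*}
\mathrm{Area}_g(\Sigma)\ge \mathrm{Area}_g(B_{s_0,g}(x_0)\cap\Sigma)\ge \pi\, e^{-2\sqrt{\kappa}\,s_0}\,s_0^{\,2}\ge (\pi e^{-2})s_0^{\,2},
\end{equation*}
where the last inequality uses $2\sqrt{\kappa}\,s_0\le 1<2$.

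The only subtlety is ensuring that the ambient ball $B_{s_0,g}(x_0)$ fits inside the region where $g$, its curvature, and its injectivity radius are all controlled; this is automatic from the choice $|x_0|=\tfrac{3}{2}$, the bound $s_0\le\tfrac{1}{6}$, and the comparison $g\ge\delta$. Beyond this, no deeper obstacle appears to arise: the proposition is essentially a packaging of the monotonicity formula at a connectedness-forced interior point.
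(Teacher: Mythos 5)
Your proof is correct and follows essentially the same route as the paper's: pick $x_0\in\Sigma\cap\partial B_\delta(0,3/2)$ by connectedness, use $g\ge\delta$ to place $B_g(x_0,s_0)$ inside the controlled annulus and to bound $\mathrm{dist}(x_0,\partial\Sigma)$ from below by $1/2$, then apply the Colding--Minicozzi monotonicity formula at scale $s_0$. Your write-up is in fact slightly more careful than the paper's (explicitly checking each hypothesis of Theorem~\ref{CM-JJ:thm} and the bound $2\sqrt{\kappa}\,s_0\le 1$), but there is no substantive difference in approach.
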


\begin{proof}
Take $x_0 \in \Sigma \,\cap \, \partial B_\delta(0,3/2)$ which exists because
$\Sigma$ is connected and has points $q$ and $q'$ as in the hypothesis.
Since $g \ge \delta$
\be
B_g(x_0, 1/2) \subset B_\delta(x_0, 1/2) \subset B_\delta(0, 1) \setminus B_\delta(0,2).
\ee
Since $s_0<1/2$ and satisfies the given bounds depending on
injectivity radius and sectional curvature, we can apply
Theorem~\ref{CM-JJ:thm} and (\ref{lowerbound:minsurf}) to obtain
\be
\mathrm{Area}_g(\Sigma)\geq \mathrm{Area}_g(\Sigma \,\cap \, B_g(x_0, s_0))
 \ge (\pi e^{-2}) s_0^2. \qedhere
\ee
\end{proof}

We now prove Theorem~\ref{rmrk-s_0}:

\begin{proof}[Proof of Theorem~\ref{rmrk-s_0}]
Note that $c$ in Theorem~\ref{rmrk-s_0}
satisfies (\ref{choice-c-ann}) which is the hypothesis of Propositions \ref{curvlemma:oct} and \ref{injrad:march}.  So we obtain uniform
bounds on $i_0$ and $\kappa$ for all $(M,g)$. 
Thus the value of $s_0$ in Proposition~\ref{CMcor} 
applied to the metric $g_{c,\Phi_0}=c^{-2}\Phi_0^*g$
does not depend on $(M,g)$ satisfying the hypotheses of the theorem.
\end{proof}

\subsection{Part of the Minimal Surface is near the Point}\label{part}
Before we prove Theorem~\ref{minsurf} we prove that every outermost
minimal surface intersects with a small ball about one of the $p_i \in P$.
Lemma~\ref{old-lem-for-prop} is applied to show each $\Sigma_i$ for $i=1..n$ intersects with a small ball about $p_i$.  Lemma~\ref{lem-for-prop} can be applied to every outermost minimal surface, even the ones which do not surround a $p_i$.

\begin{lem} \label{old-lem-for-prop}
In any Riemannian manifold, $M\subseteq \mathbb{R}^3$, endowed with a metric $g \ge \delta$, a surface $\Sigma'=\partial \Omega$ surrounding a point $p\in \Omega$ satisfies
\be 
\Sigma'\,\cap \, \bar{B}_\delta\left(p, \sqrt{\area_g(\Sigma')/(4\pi)}\,\right) \neq \emptyset.
\ee 
\end{lem}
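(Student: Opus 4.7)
The plan is to argue by contradiction using a radial projection from $p$ onto a Euclidean sphere. Set $r_0=\sqrt{\area_g(\Sigma')/(4\pi)}$ and suppose $\Sigma'\cap \bar B_\delta(p,r_0)=\emptyset$. Since $\Sigma'$ is compact, this leaves some margin, so $\Sigma'$ lies in the open exterior of $\bar B_\delta(p,r)$ for some $r>r_0$. Define the radial projection
\[
\pi:\mathbb{R}^3\setminus\{p\}\to \partial B_\delta(p,r),\qquad \pi(x)=p+r\tfrac{x-p}{|x-p|}.
\]

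Next I would bound the Jacobian of $\pi|_{\Sigma'}$. At $x\in \Sigma'$ with $\rho:=|x-p|\ge r$, the differential $d\pi_x$ annihilates the radial direction and scales any vector perpendicular to $x-p$ by $r/\rho$. Taking an orthonormal frame of $T_x\Sigma'$ adapted to the angle $\theta$ between the unit normal $n$ of $\Sigma'$ and the radial direction $\hat r$, a short computation gives
\[
J_\pi(x)=\Bigl(\tfrac{r}{\rho}\Bigr)^{\!2}|\cos\theta|\le 1.
\]
This is the standard conformal-type cap estimate and is the only computational ingredient needed.

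Now the topological step: since $\Omega$ is bounded (the bounded component of $\mathbb{R}^3\setminus \Sigma'$ by Jordan--Brouwer) and $p\in\Omega$, every ray from $p$ starts inside $\Omega$ and must eventually exit, so it meets $\Sigma'$. Hence $\pi|_{\Sigma'}$ surjects onto $\partial B_\delta(p,r)$. The area formula for Lipschitz maps then gives
\[
\area_\delta(\Sigma')\ge \int_{\Sigma'}J_\pi\,d\area_\delta
=\int_{\partial B_\delta(p,r)}\#\pi^{-1}(y)\,d\area_\delta(y)\ge 4\pi r^2.
\]

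Finally, because $g\ge\delta$, we have $\area_g(\Sigma')\ge \area_\delta(\Sigma')\ge 4\pi r^2>4\pi r_0^2=\area_g(\Sigma')$, a contradiction. The main (and very mild) obstacle is the Jacobian computation; the topological surjectivity and the pointwise comparison $g\ge\delta$ are immediate from the hypotheses. No smoothness of $\Sigma'$ beyond what is needed to apply the area formula is required.
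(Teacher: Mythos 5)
Your proof is correct, but it takes a genuinely different route from the paper. The paper also argues by contradiction, but its key tool is the sharp isoperimetric inequality in $\mathbb{R}^3$: if $\bar{B}_\delta(p,\nu r_0)\subset\Omega$ for some $\nu>1$, then
$\area_g(\Sigma')\ge\area_\delta(\Sigma')\ge C_{iso}\vol_\delta(\Omega)^{2/3}\ge\area_\delta(\partial B_\delta(p,\nu r_0))=\nu^2\area_g(\Sigma')$,
a contradiction. You instead use the radial projection $\pi$ onto $\partial B_\delta(p,r)$, the Jacobian bound $J_\pi=(r/\rho)^2|\cos\theta|\le 1$ for $\rho\ge r$, and the surjectivity of $\pi|_{\Sigma'}$ (every ray from $p$ must exit the bounded set $\Omega$ through $\partial\Omega$), then conclude via the area formula. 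Both arguments are sound and of comparable length; the isoperimetric route is a one-liner once one grants the sharp constant (which the paper implicitly uses when it equates the isoperimetric lower bound for the ball with the area of its boundary sphere), whereas your projection argument avoids the isoperimetric inequality entirely at the cost of the Jacobian computation and an appeal to the area formula for Lipschitz maps on a rectifiable surface. Your version also makes explicit the topological input (boundedness of $\Omega$ so that rays exit), which in the paper is replaced by the containment $\bar{B}_\delta(p,\nu r_0)\subset\Omega$ obtained from connectedness of the ball. Either proof suffices for the application in Remark~\ref{oldlemma}.
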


\begin{proof}
Indeed, had there existed a $\nu>1$ such that
\be \label{BinOm}
\bar{B}_\delta\left(p, \nu\sqrt{\area_g(\Sigma')/(4\pi)}\,\right) \subset \Omega.
\ee 
then we would have
\begin{eqnarray} 
\qquad \area_g(\Sigma')&\ge &\area_\delta(\Sigma') 
\textrm{ because } g\ge \delta, \\
 &\ge& C_{iso} \left(\mathrm{Vol}_\delta(\Omega)\right)^{2/3}
\textrm{ by the isoperimetric inequality,}\\
&\ge& C_{iso} \left(\mathrm{Vol}_\delta\left(
B_\delta\left(p, \nu\sqrt{\area_g(\Sigma')/(4\pi)}\,\right)\right)\right)^{2/3} 
\textrm{ by (\ref{BinOm}),}
\\
&=&\area_\delta\left(\partial B_\delta\left(p, \nu\sqrt{\area_g(\Sigma')/(4\pi)}\,\right)\right)\\
&=& 4\pi \left(\nu\sqrt{\area_g(\Sigma')/(4\pi)}\,\right)^2
= \nu^2 \area_g(\Sigma')>\area_g(\Sigma'),
\end{eqnarray}
which is a contradiction.
\end{proof}

\begin{rmrk}\label{oldlemma}
By taking $p=p_i$ we see that in the cases of the outermost minimal surfaces $\Sigma_i$ surrounding $p_i$ we have 
\begin{equation}
\Sigma_i\,\cap \, \bar{B}_\delta\left(p_i, \sqrt{\area_g(\Sigma_i)/(4\pi)}\right) \neq \emptyset.
\end{equation}
\end{rmrk}

As it is possible there are other outermost minimizing surfaces which do not contain a point $p_i$, we prove the following lemma using the area lower bounds developed in Proposition \ref{prop-area-below}:

\begin{lem}\label{lem-for-prop}
Let $\Sigma_i$ be an outermost minimal surface of a geometrostatic manifold, $-n'\le i\le n$. There exists $j=j(i) \in \{1,...,n\}$ and $p_j\in P$ with 
\begin{equation}\label{newprop3.15}
\Sigma_i \,\cap \, B_\delta \left(p_j, \sqrt{\area_g(\Sigma_i)/\pi}\,\right)\neq \emptyset.
\end{equation} 
Note that $j(i)=i$ when $i\ge 1$.
\end{lem}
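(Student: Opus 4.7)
My plan is to split into two cases by the sign of $i$. For $i\geq 1$ the claim is immediate from Lemma \ref{old-lem-for-prop} (equivalently Remark \ref{oldlemma}) applied at $p=p_i$: since $p_i\in\Omega_i$ and geometrostatic metrics satisfy $g=(\psi\chi)^2\delta\geq\delta$, that lemma yields
\[
\Sigma_i\cap\bar B_\delta\!\left(p_i,\,\sqrt{A_i/(4\pi)}\,\right)\neq\emptyset,\qquad A_i:=\area_g(\Sigma_i),
\]
and $\bar B_\delta(p_i,\sqrt{A_i/(4\pi)})\subset B_\delta(p_i,\sqrt{A_i/\pi})$, so we may take $j(i)=i$.

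For the harder case $i\leq 0$, the region $\Omega_i$ contains no point of $P$, so Lemma \ref{old-lem-for-prop} cannot be invoked. My plan is to combine the area lower bound of Proposition \ref{prop-area-below} with a pointwise upper bound on $|\nabla\Psi|/\Psi^2$ in terms of the Euclidean distance to $P$. Since Huisken--Ilmanen guarantees $\chi(\Sigma_i)=2$, Proposition \ref{prop-area-below} with $\Psi=\psi\chi$ yields $x_0\in\Sigma_i$ at which
\[
\frac{|\nabla\Psi(x_0)|}{\Psi(x_0)^2}\geq\frac{2\sqrt{\pi}}{\sqrt{A_i}}.
\]
For the matching upper bound, set $r_{\min}(x):=\min_j|x-p_j|$ and note that $\nabla\Psi=\chi\nabla\psi+\psi\nabla\chi$ together with $\chi,\psi\geq 1$ gives $|\nabla\Psi|/\Psi^2\leq|\nabla\psi|/\psi^2+|\nabla\chi|/\chi^2$. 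I would then derive two complementary bounds on $|\nabla\psi|/\psi^2$: first, $|\nabla\psi|\leq\sum_j\beta_j/\rho_j^2\leq m/r_{\min}^2$ combined with $\psi\geq 1$ gives $|\nabla\psi|/\psi^2\leq m/r_{\min}^2$; second, $|\nabla\psi|\leq(1/r_{\min})\sum_j\beta_j/\rho_j\leq\psi/r_{\min}$ gives $|\nabla\psi|/\psi^2\leq 1/r_{\min}$. Symmetric bounds hold for $\chi$, so
\[
\frac{|\nabla\Psi|}{\Psi^2}(x)\leq 2\,F(r_{\min}(x)),\qquad F(r):=\min\!\left(\tfrac{m}{r^2},\ \tfrac{1}{r}\right).
\]

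Combining the two inequalities at $x_0$ gives $F(r_{\min}(x_0))\geq\sqrt{\pi/A_i}$, and a case split finishes. If $r_{\min}(x_0)\leq m$ then $F(r)=1/r$, so $r_{\min}(x_0)\leq\sqrt{A_i/\pi}$ directly. If $r_{\min}(x_0)>m$ then $F(r)=m/r^2$, giving $r_{\min}(x_0)^2\leq m\sqrt{A_i/\pi}$; the assumption $r_{\min}(x_0)>m$ then forces $A_i>\pi m^2$, which in turn yields $r_{\min}(x_0)\leq\sqrt{m}\,(A_i/\pi)^{1/4}\leq\sqrt{A_i/\pi}$. Letting $j=j(i)\in\{1,\ldots,n\}$ realize $r_{\min}(x_0)$ places $x_0\in\bar B_\delta(p_j,\sqrt{A_i/\pi})$; promotion to the open ball follows because the rigidity clause of Proposition \ref{prop-area-below} (requiring $\Psi$ and $|\nabla\Psi|$ constant on $\Sigma_i$ and $H^2=4K$) is incompatible with a conformally flat metric on an $\Omega_i$ disjoint from $P$. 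The main technical obstacle is engineering the correct two-sided bound on $|\nabla\Psi|/\Psi^2$: neither $m/r^2$ nor $1/r$ alone matches against $\sqrt{A_i/\pi}$, but their minimum does.
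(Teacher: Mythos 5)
Your overall strategy is the same as the paper's: for $i\ge 1$ invoke Lemma~\ref{old-lem-for-prop}, and for the remaining surfaces play the lower bound of Proposition~\ref{prop-area-below} (with $\chi(\Sigma_i)=2$) against a pointwise upper bound on $|\nabla\Psi|/\Psi^2$ coming from the explicit form of $\chi$ and $\psi$. The packaging differs: the paper argues by contradiction, assumes $1/|x-p_j|\le\sqrt{\pi/\area_g(\Sigma_i)}$ for all $j$ and all $x\in\Sigma_i$, and closes the loop with the single estimate $\tfrac{|\nabla\Psi|}{\Psi^2}\le\sqrt{\pi/\area_g(\Sigma_i)}\bigl(\tfrac{\chi-1}{\chi}+\tfrac{\psi-1}{\psi}\bigr)<2\sqrt{\pi/\area_g(\Sigma_i)}$, whereas you work in the contrapositive direction with the two-sided bound $\min\{m/r_{\min}^2,\,1/r_{\min}\}$ and a case split on $r_{\min}(x_0)$ versus $m$. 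Both routes are correct and rest on the same gradient computation; the paper's version is shorter because the factor $(\chi-1)/\chi+(\psi-1)/\psi<2$ absorbs the sum over $j$ and delivers the needed strictness in one stroke.

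The one genuine weak point is your promotion from the closed to the open ball in Case~1. You assert that equality in Proposition~\ref{prop-area-below} is ``incompatible with a conformally flat metric on an $\Omega_i$ disjoint from $P$,'' but this is not proved and is not obvious: equality in that proposition does occur for geometrostatic data (the paper's Schwarzschild computation realizes it), so ruling it out for the surfaces with $i\le 0$ would itself require an argument, and the paper explicitly leaves open (as a conjecture) whether such surfaces exist at all. Fortunately you do not need the rigidity clause: your own second bound is strict, since $\sum_j\beta_j/\rho_j^2\le(\psi-1)/r_{\min}$ gives $|\nabla\psi|/\psi^2\le(\psi-1)/(r_{\min}\psi^2)<1/r_{\min}$ because $\psi>1$ everywhere (and likewise for $\chi$), so $|\nabla\Psi(x_0)|/\Psi(x_0)^2<2/r_{\min}(x_0)$ and Case~1 already yields $r_{\min}(x_0)<\sqrt{\area_g(\Sigma_i)/\pi}$ strictly. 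This is precisely the mechanism behind the paper's strict inequality, and with that substitution your proof is complete.
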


\begin{proof}
Suppose the opposite: that for all $p_j\in P$ we have 
\be
\Sigma_i\,\cap \, B_\delta\left(p_j, \sqrt{\area_g(\Sigma_i)/\pi}\right)=\emptyset.
\ee 
This is true iff for all $p_j\in P$ we have
\be \label{est-june17}
\frac{1}{|x-p_j|}\,\,\,\le\,\,\, \sqrt{\frac{\pi}{\area(\Sigma_i)}}
\ee
for all $x\in \Sigma_i$.
By the work of Huisken-Ilmanen $\Sigma_i$ is diffeomorphic to $S^2$, and so  Proposition \ref{prop-area-below} implies
\begin{equation}\label{arealowerbd}
\sqrt{\frac{4\pi} {\area_g(\Sigma_i)}} 
\,\,\,\le\,\,\, \,\max_{x\in \Sigma_i}\left\{\frac{\,|\nabla \Psi_x|\,}{\Psi^2(x)}\right\},
\end{equation}
where $\Psi(x)=\chi(x)\psi(x)$. We proceed by estimating 
${|\nabla \Psi_x|}/{\Psi^2(x)}$ using \eqref{est-june17}:
\begin{align}
\frac{\,|\nabla \Psi_x|\,}{\Psi^2(x)}\,\,\,\le 
& \,\,\,\,\frac{\,|\nabla \Psi_x|\,}{\Psi(x)}
\,\,\le\,\,\frac{\,|\nabla \chi_x|\,}{\chi(x)}+ \frac{\,|\nabla \psi_x|\,}{\psi(x)}\\
\le&\,\,\,\,\frac{\,\left(\sum_j \frac{\alpha_j}{|x-p_j|^2}\right)\,}{\chi(x)}
\,\,+\,\,\frac{\,\left(\sum_j \frac{\beta_j}{|x-p_j|^2}\right)\,}{\psi(x)}\\
\le&\,\,\,\,\sum_j\frac{1}{|x-p_j|}
\left(\frac{\left(\frac{\alpha_j}{|x-p_j|}\right)}{\chi(x)}\,+\,\frac{\left(\frac{\beta_j}{|x-p_j|}\right)}{\psi(x)}\right)\\
\le &\,\,\,\,\sqrt{\frac{\pi}{\area_g(\Sigma_i)}}\,
\left(\frac{\left(\sum_j\frac{\alpha_j}{|x-p_j|}\right)}{\chi(x)}
+\frac{\left(\sum_j\frac{\beta_j}{|x-p_j|}\right)}{\psi(x)}\right)\\
= & \,\,\,\,\sqrt{\frac{\pi}{\area_g(\Sigma_i)}}\,\left(\frac{(\chi(x)-1)}{\chi(x)}
+\frac{(\psi(x)-1)}{\psi(x)}\right)\,\,<\,\, 2 \sqrt{\frac{\pi}{\area_g(\Sigma_i)}}.
\end{align}
This chain of inequalities proves that 
\be
 \,\max_{x\in \Sigma_i}\left\{\frac{|\nabla \Psi_x|}{\Psi^2(x)}\right\}<\sqrt{\frac{4\pi} {\area_g(\Sigma_i)}}
 \ee
 which is a direct contradiction to \eqref{arealowerbd}.
 The final note follows from Lemma~\ref{old-lem-for-prop}.
\end{proof}

\subsection{The Whole Minimal Surface is Near the Point}\label{whole}
In this subsection we prove the first part of Theorem~\ref{minsurf}. Recall that $\Sigma_i$ for $-n'\le i\le n$ denotes the $i^{th}$ outermost minimal surface, and recall that 
\be
\sigma_j =\min_{k\neq j}\{|p_k-p_j|\},\ \  \sigma=\min_j\{\sigma_j, |p_j|\} \textrm{ and }
\sigma_j^{out}=\,\max_{k\neq j}\{|p_k-p_j|\}.
\ee

\begin{prop}\label{prop1}
Let $s_0$ be as in the Theorem~\ref{rmrk-s_0}.
The universal constant 
\be\label{defnC1}
C_1=1 + 2e/s_0
\ee 
is such that 
for all geometrostatic $(\mathbb{R}^3\setminus P,g)$ with $\sigma >20 mC_1$ and all $-n'\le i\le n$ there is a $j=j(i) \in \{1,...,n\}$ and $p_j\in P$ with 
\be \label{claim1a}
\Sigma_i \subseteq B_\delta\left(p_j, 2C_1\sqrt{\area_g(\Sigma_i)/\pi}\right).
\ee 
Furthermore, if $i>0$ then we may take $j(i)=i$.
\end{prop}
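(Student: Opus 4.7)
Let $r := \sqrt{\area_g(\Sigma_i)/\pi}$; by Huisken--Ilmanen \eqref{Penrose-HI} applied to the component $\Sigma_i$ we have $r \le 4m$. By Lemma~\ref{lem-for-prop} (and Remark~\ref{oldlemma} when $i\ge 1$, in which case $j=i$) there is an index $j = j(i) \in \{1,\ldots,n\}$ with
$\Sigma_i \cap B_\delta(p_j, r) \neq \emptyset$.
Suppose for contradiction that $\Sigma_i \not\subseteq B_\delta(p_j, 2C_1 r)$, and pick $y \in \Sigma_i$ with $|y - p_j| > 2C_1 r$.

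The strategy is to slip an annular shell of the form $\{c \le |x - p_j| \le 2c\}$ strictly between the inner intersection point (at distance $\le r$) and $y$ (at distance $> 2C_1 r$), then apply Theorem~\ref{rmrk-s_0} to a connected piece of $\Sigma_i$ straddling this shell, producing an area lower bound that contradicts the Penrose-derived upper bound $\area_g(\Sigma_i) = \pi r^2$. Set $c := 2re/s_0$. Since $s_0 \le 1/6 < e$ we have $r < c$, and since $C_1 = 1 + 2e/s_0$ we get $c < C_1 r$, so also $2c < 2C_1 r$. Because $\Sigma_i$ is connected (in fact diffeomorphic to $S^2$ by Huisken--Ilmanen regularity), applying the Intermediate Value Theorem to $x \mapsto |x-p_j|$ along a path in $\Sigma_i$ from the inner point to $y$ produces crossings of both spheres $\partial B_\delta(p_j, c)$ and $\partial B_\delta(p_j, 2c)$. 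Connecting these crossings by the intervening subpath shows that some connected component $\Sigma'$ of $\Sigma_i \cap \{c \le |x-p_j| \le 2c\}$ has boundary points on both bounding spheres.

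Next, verify the hypotheses of Theorem~\ref{rmrk-s_0} at center $p_j$ with this $c$: using $r \le 4m$ and $\sigma > 20mC_1$,
\be
5c \,=\, \tfrac{10re}{s_0} \,\le\, \tfrac{40me}{s_0} \,<\, 20m\!\left(1+\tfrac{2e}{s_0}\right) \,=\, 20mC_1 \,<\, \sigma \,\le\, \sigma_j,
\ee
so $\sigma_j > 5c$, and likewise $m/(\sigma_j + c) \le m/\sigma < 1$. After a small generic perturbation of $c$ within the interval $(re/s_0,\, C_1 r)$ (Sard's theorem applied to $x\mapsto |x-p_j|\big|_{\Sigma_i}$ yields a full-measure set of transverse levels), the pullback $\Phi_j^{-1}(\Sigma')$ is a smooth connected $g_{c,\Phi_j}$-minimal surface in $\mathcal{A}$ meeting both $\partial B_\delta(0,1)$ and $\partial B_\delta(0,2)$. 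Theorem~\ref{rmrk-s_0} therefore gives
\be
\area_g(\Sigma') \,=\, c^2\,\area_{g_{c,\Phi_j}}\!\left(\Phi_j^{-1}(\Sigma')\right) \,\ge\, c^2 \pi e^{-2} s_0^2 \,=\, 4\pi r^2,
\ee
contradicting the trivial bound $\area_g(\Sigma') \le \area_g(\Sigma_i) = \pi r^2$.

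The main technical obstacle is the double-sided balancing of the single parameter $c$: it must lie in the narrow window $(re/s_0,\, C_1 r)$ so that the scaled surface still traverses both bounding spheres of $\mathcal{A}$ (forcing $c < C_1 r$) while being long enough for the monotonicity-driven lower bound $c^2\pi e^{-2}s_0^2$ to exceed the Penrose-driven budget $\pi r^2$ (forcing $c > re/s_0$), and the separation hypothesis $\sigma_j > 5c$ of Theorem~\ref{rmrk-s_0} must survive this choice. These three constraints together pin down the precise form $C_1 = 1 + 2e/s_0$ and consume the hypothesis $\sigma > 20mC_1$.
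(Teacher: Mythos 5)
Your proof is correct and follows essentially the same route as the paper's: the paper likewise finds an inner point via Lemma~\ref{lem-for-prop}, assumes an outer point exists, extracts a connected piece of $\Sigma_i$ crossing two concentric spheres about $p_j$, rescales, and plays Theorem~\ref{rmrk-s_0} against the Penrose area bound --- the only differences being that the paper places the shell at radii $[C_1 r,\, 2C_1 r]$ (so the contradiction reads $C_1 > e/s_0$) whereas you place it at $[2er/s_0,\, 4er/s_0]$, and that the paper factors the argument through an auxiliary lemma reused later. Your explicit Sard-type transversality remark handles a detail the paper leaves implicit.
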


This proposition is a consequence of the 
following lemma which will be applied again later in the paper as well:

\begin{lem}\label{lem-prop1}
Let $s_0$ be as in the Theorem~\ref{rmrk-s_0}, let $C_1=1 + 2e/s_0$.
\begin{itemize}
\item {\sc Case 1:} Suppose $1\le i\le n$ and suppose that $r\ge \sqrt{\area_g(\Sigma_i)/ \pi}$ is a radius such that 
\be 
\sigma_i > 5 C_1 r  \text{\ \ or\ \ } \sigma_i^{out} < \tfrac{1}{5} C_1 r.
\ee
We then have 
\be \label{claim1b+}
\Sigma_i \subseteq B_\delta(p_i, 2 C_1 r).
\ee 
\item {\sc Case 2:} Suppose that $-n'\le i\le 0$ and suppose that $r\ge \sqrt{\area_g(\Sigma_i)/ \pi}$ is a radius such that 
\be 
\sigma > 5 C_1 r.
\ee
Then there is a $j=j(i) \in \{1,...,n\}$ and $p_j\in P$ for which 
\be \label{claim1b-}
\Sigma_i \subseteq B_\delta(p_j, 2 C_1 r).
\ee 
\end{itemize}
\end{lem}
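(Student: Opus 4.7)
The plan is to argue by contradiction, combining Lemma~\ref{lem-for-prop} (which forces $\Sigma_i$ to come close to some $p_j\in P$) with the area lower bound of Theorem~\ref{rmrk-s_0} (which forces any minimal ``connecting neck'' across an annular shell to have substantial area). Pick $p_j\in P$ from Lemma~\ref{lem-for-prop} so that $\Sigma_i$ meets $\bar B_\delta\bigl(p_j,\sqrt{\area_g(\Sigma_i)/\pi}\bigr)\subseteq \bar B_\delta(p_j,r)$; in Case~1, Remark~\ref{oldlemma} allows the choice $j=i$. Suppose for contradiction that the desired inclusion $\Sigma_i\subseteq B_\delta(p_j,2C_1r)$ fails, so that there exist $x_1,x_0\in\Sigma_i$ with $|x_1-p_j|\le r$ and $|x_0-p_j|>2C_1r$.

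Since $\Sigma_i$ is a connected topological $2$-sphere by the Huisken-Ilmanen theory reviewed in Subsection~\ref{sect-HI}, I may join $x_1$ and $x_0$ by a continuous path $\gamma$ in $\Sigma_i$. Set $c=C_1r$, so that $r<c<2c<2C_1r$; by continuity $|\gamma(\cdot)-p_j|$ attains both values $c$ and $2c$. Taking the sub-arc of $\gamma$ from its last exit of $\{|x-p_j|\ge 2c\}$ to the ensuing entry of $\{|x-p_j|\le c\}$, and then letting $\Sigma^*$ denote the connected component of $\Sigma_i\cap\{c\le|x-p_j|\le 2c\}$ containing this sub-arc, I obtain a smooth connected minimal piece of $\Sigma_i$ whose boundary lies on $\partial B_\delta(p_j,c)\cup\partial B_\delta(p_j,2c)$ and meets each of these two spheres.

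Next I verify the hypotheses of Theorem~\ref{rmrk-s_0} with center $p_j$ and scale $c=C_1r$: in Case~1 (where $j=i$) the alternative $\sigma_i>5c$ or $\sigma_i^{out}<c/5$ is exactly the lemma's assumption, while in Case~2 the global bound $\sigma>5C_1r$ gives $\sigma_j>5c$. Pulling $\Sigma^*$ back under $\Phi(u)=p_j+cu$ yields a smooth connected $g_{c,\Phi}$-minimal surface in $\mathcal{A}$ meeting both $\partial B_\delta(0,1)$ and $\partial B_\delta(0,2)$, so Theorem~\ref{rmrk-s_0} and the scaling identity $\area_g(\Sigma^*)=c^2\,\area_{g_{c,\Phi}}(\Phi^{-1}(\Sigma^*))$ give $\area_g(\Sigma^*)\ge C_1^2 r^2\pi e^{-2}s_0^2$. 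Since $\Sigma^*\subseteq\Sigma_i$ and $\pi r^2\ge\area_g(\Sigma_i)$ by hypothesis, this forces $C_1^2 e^{-2}s_0^2\le 1$, i.e.\ $C_1\le e/s_0$, contradicting $C_1=1+2e/s_0>e/s_0$. The only delicate point is the extraction of the single connected piece $\Sigma^*$ with the correct boundary structure so that Theorem~\ref{rmrk-s_0} applies verbatim; once that is in place the value $C_1=1+2e/s_0$ is calibrated exactly so that the factor $c^2=C_1^2 r^2$ from rescaling overwhelms the upper bound $\pi r^2$ coming from $r\ge\sqrt{\area_g(\Sigma_i)/\pi}$.
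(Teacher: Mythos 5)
Your proof is correct and follows essentially the same route as the paper: argue by contradiction, use Lemma~\ref{lem-for-prop} to place a point of $\Sigma_i$ near $p_j$, use connectedness to extract a minimal piece crossing the rescaled annulus between radii $C_1r$ and $2C_1r$, and apply Theorem~\ref{rmrk-s_0} to get an area lower bound contradicting $r\ge\sqrt{\area_g(\Sigma_i)/\pi}$ via $C_1>e/s_0$. Your extraction of a single connected component of $\Sigma_i\cap\{c\le|x-p_j|\le 2c\}$ is in fact slightly more careful than the paper's treatment of the corresponding surface $S_i$.
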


Before proving Lemma~\ref{lem-prop1}, we apply it to 
prove Proposition~\ref{prop1}:

\begin{proof}[Proof of Proposition \ref{prop1}]
We are given $\sigma > 20 m C_1$.
Let $r = \sqrt{\area_g(\Sigma_i)/\pi}$.   Then by the Penrose inequality:
\be
\sigma >20 C_1m\ge 20 C_1\sqrt{\area_g(\Sigma_i)/(16\pi)}= 5C_1 r.
\ee 
Such an $r$ satisfies $r< \sigma/(5C_1)$, so we have (\ref{claim1b+}) which implies
(\ref{claim1a}).
\end{proof}

We now prove Lemma~\ref{lem-prop1}:

\begin{proof}[Proof of Lemma \ref{lem-prop1}]
Let $j=j(i)$ be as in Remark \eqref{oldlemma} and Lemma \ref{lem-for-prop}. 
Suppose the opposite: there exists a point,
\be
q_i \in \Sigma_i \setminus B_\delta(p_j, 2C_1r).
\ee

Applying Lemma~\ref{lem-for-prop}
we are able to conclude that $\Sigma_i$ contains a point  
\be
q_i\in B_\delta\left(p_j, C_1\sqrt{\area_g(\Sigma_i)/\pi}\right) \subseteq B_\delta(p_j, C_1r).   
\ee
Since $\Sigma_i$ is closed and connected, this means
we can choose $q_i$ above such that
 \be
q_i \in \Sigma_i \,\cap \,  \partial B_\delta(p_j, 2C_1r).
\ee
and we can choose 
 \be
q'_i \in \Sigma_i \,\cap \,  \partial B_\delta(p_j, C_1r).
\ee
In particular the minimal surface depicted in Figure~\ref{fig-S},
\be \label{S-as-in-fig}
S_i=\Sigma_i\,\cap \, \left(\bar{B}_\delta(p_j, 2C_1r)\setminus B_\delta(p_j, C_1r)\right),
\ee
contains the points, $q_i$ and $q'_i$, as above.  

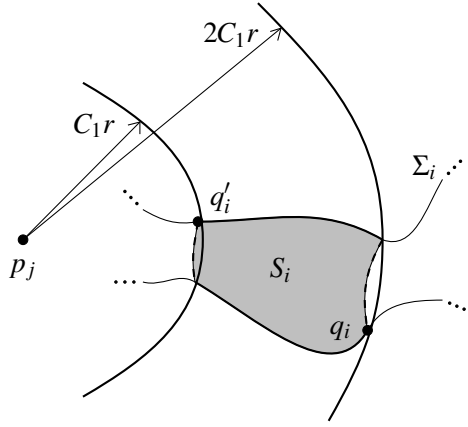
\begin{figure}[h] 
\begin{tikzpicture}[scale=0.8]
\draw[fill=lightgray] (0.92, 0.3) to [out=-100, in =100] (0.87, -0.7) to [out=-30, in= -120] (3.75,-1.5) to [out=100, in=-120] (4,0) to [out=150, in=0] (0.92, 0.3);
\draw[thick] (-1,2.6) to [out=-30, in=90] (1,0) to [out=-90, in=30] (-1,-2.6);
\draw[thick] (1.92,3.92) to [out=-45, in=90] (4,0) to [out=-90, in=60] (3.1,-3);
\draw (-2,0) to (-0.05, 1.95);
\draw (-2,0) to (2.305, 3.505); 
\draw (2.125, 3.45) to (2.305, 3.505) to (2.25, 3.3);
\node[left] at (2.1, 3.4) {$2C_1r$};
\draw (-0.25, 1.9) to (-0.05, 1.95) to (-0.05, 1.75);
\node[left] at (-0.3, 1.9) {$C_1r$};
\draw (0,0.5) to [out=-30, in =-170] (0.92, 0.3);
\draw[thick] (0.92, 0.3) to [out=0, in =150] (4,0);
\draw (4,0) to  [out =-20, in=-120] (5, 1);
\draw (0,-0.7) to [out=-15, in=150] (0.87, -0.7);
\draw[thick] (0.87, -0.7) to [out=-30, in= -120] (3.75,-1.5);
\draw (3.75, -1.5) to [out=60, in =180] (5, -1);
\draw[dashed, thick] (0.92, 0.3) to [out=-100, in =100] (0.87, -0.7);
\draw[dashed, thick] (4, 0) to [out=-120, in =100] (3.75, -1.5);
\node at (2.3, -0.5) {$S_i$};
\node at (-2, -0.5) {$p_j$};
\draw[fill] (-2,0) circle(0.08);
\node at (3.3, -1.5) {$q_i$};
\draw[fill] (3.75, -1.5) circle(0.08);
\node at (1.3, 0.7) {$q_i'$};
\draw[fill] (0.92, 0.3) circle(0.08);
\node at (4.7, 1.2) {$\Sigma_i$};
\draw[fill] (5.1, 1.1) circle (0.02);
\draw[fill] (5.2, 1.2) circle (0.02);
\draw[fill] (5.3, 1.3) circle (0.02);
\draw[fill] (5.12, -1.05) circle (0.02);
\draw[fill] (5.24, -1.1) circle (0.02);
\draw[fill] (5.36, -1.15) circle (0.02);
\draw[fill] (-0.1, 0.6) circle (0.02);
\draw[fill] (-0.2, 0.7) circle (0.02);
\draw[fill] (-0.3, 0.8) circle (0.02);
\draw[fill] (-0.15, -0.7) circle (0.02);
\draw[fill] (-0.3, -0.7) circle (0.02);
\draw[fill] (-0.45, -0.7) circle (0.02);
\end{tikzpicture}
   \caption{The minimal surface, $S_i\subset \Sigma_i$.}
   \label{fig-S}
\end{figure}

Consider the embedding 
\be 
\Phi: \left(B_\delta(0, 4)\setminus B_\delta(0, \tfrac{1}{4})\right)\to \mathbb{R}^3\setminus P \text{\ \  given by\ \ } u\mapsto p_j+ c_1 u 
\ee
where $c_1=C_1r$.

The surface $\Phi^{-1}(S_i)$ is minimal with respect to $g_{c_1,\Phi}$ and has
 points
 \be
\Phi^{-1}(q_i) \in \Phi^{-1}(S_i) \,\cap \,  \Phi^{-1}(\partial B_\delta(p_i, 2C_1r))
= \Phi^{-1}(S_i) \,\cap \, \partial B_\delta(0, 2)
\ee
and 
 \be
\Phi^{-1}(q'_i) \in \Phi^{-1}(S_i) \,\cap \, \Phi^{-1}( \partial B_\delta(p_i, C_1r)) = \Phi^{-1}(S_i) \,\cap \, \partial B_\delta(0, 1).
\ee
In addition,
\be 
\partial \Phi^{-1}(S_i) 
\subset \partial B_\delta(0, 2) \cup \partial B_\delta(0, 1).
\ee
We may now apply Theorem~\ref{rmrk-s_0} 
to obtain
\be \label{CMcoreq-again}
\mathrm{Area}_{g_{c_1,\Phi}}(\Phi^{-1}(S_i))\ge \pi e^{-2} s_0^2.
\ee
Thus
\begin{eqnarray} 
\qquad \area_g(\Sigma_i) &\ge& \mathrm{Area}_{g}(S_i) 
\quad\qquad\qquad \qquad \textrm{ by } S_i \subset \Sigma_i\\
&=&\mathrm{Area}_{\Phi^*g}(\Phi^{-1}(S_i)) 
\qquad\qquad\textrm{ by the defn of pullback}\\
&=& c_1^2 \mathrm{Area}_{g_{c_1,\Phi}}(\Phi^{-1}(S_i)) 
\qquad\quad\textrm{ by rescaling}\\
&\ge & c_1^2 \pi e^{-2} s_0^2 
\qquad\qquad \quad\qquad \,\,\textrm{ by (\ref{CMcoreq}) }\\
&=& \left(C_1x\right)^2 \pi e^{-2} s_0^2 
\qquad\qquad\quad \textrm{ by $c_1=C_1r$}\\
&\ge & s_0^2 C_1^2\area_g(\Sigma_i)/e^2 
\qquad\qquad \quad\,\,\textrm{ by $r \ge \sqrt{\area_g(\Sigma_i)/\pi}$ }\\
&>& \area_g(\Sigma_i) \qquad \qquad
\qquad\qquad\qquad\textrm{ because $C_1>e/s_0$},
\end{eqnarray}
which is a contradiction. 
\end{proof}

\subsection{The Minimal Surface is Not Too Close to the Point}\label{not-close}

We now prove the second part of Theorem~\ref{minsurf}.
Recall $C_1$ defined in Proposition~\ref{prop1}.

\begin{prop}\label{prop2}
If $m<\sigma/(20C_1)$, then 
\be \label{claim2}
\Sigma_i \,\cap \, B_\delta\left(p_i, \frac{\alpha_i\beta_i}{4C_1(\alpha_i+\beta_i)}\right)= \emptyset \text{\ \ for all\ \ }1\le i\le n.
\ee  
\end{prop}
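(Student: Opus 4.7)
The strategy is to reduce Proposition~\ref{prop2} to Proposition~\ref{prop1} (equivalently, Case~1 of Lemma~\ref{lem-prop1}) by means of the Kelvin-type inversion of the Appendix. Centered at $p_i$ with inversion scale $k$ of order $\alpha_i\beta_i$, this inversion sends $(M,g)\setminus\{p_i\}$ isometrically to another Brill--Lindquist geometrostatic manifold $(\tilde M,\tilde g)$ in which $p_i$ has been sent to infinity. The original infinity becomes a new puncture $\tilde p_0$, each $p_j$ (for $j\ne i$) is sent to $\tilde p_j=p_i+k(p_j-p_i)/|p_j-p_i|^2$, and the ADM mass of $\tilde M$ at its new end at infinity equals $m_i$ as given by \eqref{m_i}. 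Since $\Sigma_i$ separates $p_i$ from the original infinity of $M$, its image $\tilde\Sigma$ is the outermost minimal surface of $\tilde M$ surrounding $\tilde p_0$, and $\area_{\tilde g}(\tilde\Sigma)=\area_g(\Sigma_i)$ since the inversion is an isometry.

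\textbf{Key steps.} Relabel $\tilde p_0$ as a numbered hole of $\tilde M$ and apply Case~1 of Lemma~\ref{lem-prop1} there. The necessary bounds are: first, inserting $r_{i,j}\ge\sigma$ and $m<\sigma/(20C_1)$ into \eqref{m_i} yields
\begin{equation}
m_i\le(\alpha_i+\beta_i)\Bigl(1+\tfrac{m}{\sigma}\Bigr)<\Bigl(1+\tfrac{1}{20C_1}\Bigr)(\alpha_i+\beta_i),
\end{equation}
so $m_i$ is essentially $\alpha_i+\beta_i$; second, $|\tilde p_j-\tilde p_0|=k/|p_j-p_i|\le\alpha_i\beta_i/\sigma$ for all $j\ne i$, so the outer separation $\tilde\sigma_0^{out}$ around $\tilde p_0$ in $\tilde M$ is at most $\alpha_i\beta_i/\sigma$, which is tiny; third, setting $r=\sqrt{\area_{\tilde g}(\tilde\Sigma)/\pi}$, the Penrose inequality applied in $\tilde M$ gives $r\le 4m_i$, and the outer-separation hypothesis $\tilde\sigma_0^{out}<C_1 r/5$ is easily verified using a lower bound on $r$ coming from the Schwarzschild-like throat structure at $\tilde p_0$. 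Case~1 of Lemma~\ref{lem-prop1} then delivers
\begin{equation}
\tilde\Sigma\subseteq B_\delta(\tilde p_0,\,2C_1r).
\end{equation}

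\textbf{Reverting the inversion and main obstacle.} The inversion formula $|\tilde y-\tilde p_0|=\alpha_i\beta_i/|y-p_i|$ converts the above containment into the exclusion
\begin{equation}
\Sigma_i\cap B_\delta\Bigl(p_i,\,\tfrac{\alpha_i\beta_i}{2C_1 r}\Bigr)=\emptyset,
\end{equation}
and to match the statement of Proposition~\ref{prop2} it remains to verify $r\le 2(\alpha_i+\beta_i)$. The main obstacle is precisely this final quantitative step: the plain Penrose bound $r\le 4m_i\approx 4(\alpha_i+\beta_i)$ is a factor of two too loose, so one must exploit the sharper area estimate specific to the Brill--Lindquist form of $\tilde g$ near $\tilde p_0$ (in the single-puncture limit the exact area $4\pi(\sqrt{\alpha_i}+\sqrt{\beta_i})^4$ of the throat is readily available, and the multi-puncture case requires a careful accounting of the corrections from the nearby $\tilde p_j$). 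A secondary subtlety, handled by Proposition~\ref{prop1} applied inside $\tilde M$ together with the smallness of $\alpha_i\beta_i/\sigma$ relative to $m_i$, is to verify that $\tilde\Sigma$ is indeed the outermost minimal surface of $\tilde M$ around $\tilde p_0$ and not fused with one of the other $\tilde p_j$.
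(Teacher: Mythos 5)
Your overall architecture is the paper's: invert at $p_i$ via Theorem~\ref{inversion}, apply Case~1 of Lemma~\ref{lem-prop1} in the inverted manifold, and pull the resulting containment back to an exclusion around $p_i$. But two of your steps do not go through as stated. First, the image of $\Sigma_i$ under the inversion need \emph{not} be the outermost minimal surface of the inverted manifold about the new puncture $\tilde p_0$ (outermost is measured from the \emph{new} infinity, which is the old end at $p_i$, so the roles of inner and outer are reversed), and Proposition~\ref{prop1} cannot certify that a given surface is outermost. The correct move, and the one the paper makes, is weaker and sufficient: the image of $\Sigma_i$ lies \emph{inside} the region $\Omega_0$ bounded by the genuine outermost surface $\Sigma_0$ of the inverted manifold, one applies Lemma~\ref{lem-prop1} to $\Sigma_0$, and the containment of $\Omega_0$ in a ball transfers to the image of $\Sigma_i$.

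Second, and more seriously, your ``main obstacle'' is a phantom created by your choice $r=\sqrt{\area_{\tilde g}(\tilde\Sigma)/\pi}$. Lemma~\ref{lem-prop1} only requires $r\ge\sqrt{\area/\pi}$, so $r$ may be taken larger; the paper takes $r$ equal to the mass $m_i$ of the new end at infinity (which dominates $\sqrt{\area(\Sigma_0)/(16\pi)}$ by Penrose), verifies the outer-separation hypothesis directly from $\tilde\sigma_0^{out}\le\alpha_i\beta_i/\sigma\le\tfrac{1}{5}(\alpha_i+\beta_i)\le m_i/5$ without any lower bound on the area, and then uses only $m_i<2(\alpha_i+\beta_i)$ from \eqref{m_i} to convert $2C_1r$ into the stated radius $\alpha_i\beta_i/(4C_1(\alpha_i+\beta_i))$. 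No sharpened Penrose inequality, no exact throat area $4\pi(\sqrt{\alpha_i}+\sqrt{\beta_i})^4$, and no multi-puncture corrections are needed; the constant $4C_1$ in the denominator is simply whatever this computation produces, not a target to be hit by a sharp area estimate. As written, your plan leaves both the lower bound on $\area(\tilde\Sigma)$ (needed for your separation check) and the upper bound $r\le2(\alpha_i+\beta_i)$ as unexecuted programs, so the proof is incomplete; replacing your $r$ by $m_i$ and the outermost claim by the containment argument closes both gaps.
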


In order to prove Proposition \ref{prop2} we apply an inversion to the geometrostatic manifold sending $p_i$ to $\infty$.   As we could not find this inversion process in the literature, we provide the details in the appendix.  See Theorem~\ref{inversion}.

\begin{proof}[Proof of Proposition \ref{prop2}]
Without loss of generality take $i=n$.
Apply the inversion of Theorem~\ref{inversion} with $x_i=p_i$,
to obtain a minimal surface $F^{-1}(\Sigma_n)$ about $y_0=0$ in 
\be
Y=\mathbb{R}^n\setminus\{y_0,...,y_{n-1}\} 
\textrm{ with }
g_Y=\left(1+\sum_{i=0}^{n-1} \frac{\alpha_{Y,i}}{|y-y_i|}\right)^2
\left(1+\sum_{i=0}^{n-1} \frac{\beta_{Y,i}}{|y-y_i|}\right)^2 \delta_y.
\ee
There is an outermost minimal surface about $y_0=0$ such that
\be
\Sigma_0=\partial \Omega_0 \subset Y.
\ee
By the definition of outermost as in Section~\ref{sect-HI}, we know  
\be\label{FinU}
F^{-1}(\Sigma_n) \subset \Omega_0.
\ee

Let
\be
A_0=\area_{g_Y}(\Sigma_0)
\ee
In Corollary~\ref{mADM-Y} it was seen that the
ADM mass of $(Y, g_Y)$ is 
\be
m_Y=\alpha_n +\beta_n +\sum_{j=1}^{n-1} \frac{\alpha_n\beta_{j}+\alpha_{j}\beta_n}{r_{j,n}}=m_n.
\ee
Observe that  
\be\label{mnanbn}
m_n \le \alpha_n+\beta_n + \alpha_n\sum_{j=1}^{n-1} 
\tfrac{\beta_j}{\sigma}
+ \beta_n\sum_{j=1}^{n-1} \tfrac{\alpha_j}{\sigma}
<(\alpha_n+\beta_n)(1+\tfrac{m}{\sigma})
\le 2(\alpha_n+\beta_n)
\ee
By the Penrose Inequality, and the fact that $\Sigma_0$ is outermost
minimizing in $(Y, g_Y)$ we have
\be
m_n=m_Y \ge \sqrt{A_0/(16\pi)}.
\ee
In addition, by Theorem~\ref{inversion},
 the $0^{th}$ separation constant of $(Y, g_Y)$
satisfies
\be\label{x-check-Penrose}
\sigma^{out}_0= \,\max_{j\neq 0}\{|y_j-0|\}
=\,\max_{j\neq 0} \{\alpha_n \beta_n /r_{j,n} \} \le \alpha_n \beta_n/ \sigma
\ee
where $\sigma$ is the separation constant for $M$ as in
(\ref{separation}).
Furthermore
\be
\frac{\alpha_n \beta_n}{ \sigma} =
\frac{\alpha_n}{(\alpha_n+\beta_n)} \frac{\beta_n}{\sigma} (\alpha_n+\beta_n)
\le 1 \cdot \frac{m}{\sigma} (\alpha_n+\beta_n) \le \frac{1}{5} (\alpha_n+\beta_n).
\ee
In particular
\be\label{x-check-1}
\sigma^{out}_0 \le m_n/5.
\ee
We may apply Lemma~\ref{lem-prop1}  to
the geometrostatic manifold $(Y, g_Y)$ with $i=0$ taking
$r=m_n$ by (\ref{x-check-Penrose}) and (\ref{x-check-1}).
Thus 
\be \label{claim1}
\Sigma_0 \subseteq B_\delta(0, 2 C_1 r).
\ee 
As a consequence, by (\ref{FinU}), we have
\be
F^{-1}(\Sigma_n) \subset \Omega_0 \subseteq B_\delta(0, 2 C_1 r).
\ee
Thus
\be
\Sigma_n \subset F(B_\delta(0, 2 C_1 r))= \mathbb{R}^3 \setminus
\bar{B}_\delta\left(p_n, \tfrac{\alpha_n\beta_n}{2C_1r}\right)
\ee
By our choice of $r=m_n$ and (\ref{mnanbn}) we have
\be
r < 2(\alpha_n +\beta_n)
\ee
which implies that
\be
\frac{\alpha_n\beta_n}{2C_1r}>\frac{\alpha_n\beta_n}{4C_1(\alpha_n+\beta_n)}.
\ee
Thus
\be
\Sigma_n \,\cap \, B_\delta\left(p_n, \tfrac{\alpha_n\beta_n}{4C_1(\alpha_n+\beta_n)}\right)
\subset
\Sigma_n \,\cap \, B_\delta\left(p_n, \tfrac{\alpha_n\beta_n}{2C_1r}\right)=\emptyset,
\ee
and our proof is now complete.
\end{proof}

\subsection{Proof of Theorems~\ref{minsurf} and~\ref{thm-M'} } 
Theorem \ref{minsurf} is a special case of the following, more general result. The result shows that all the outermost minimal surfaces $\Sigma_i$ with $i\in \{-n',..., n\}$ are located in an annular neighborhood of some point $p_{j(i)}\in P$. 

\begin{thm}\label{Ivascor}
Let $s_0$ be as in the Theorem~\ref{rmrk-s_0}.
The universal constant 
\be\label{defnC1x}
C_1=1 + 2e/s_0
\ee 
is such that for all geometrostatic $(\mathbb{R}^3\setminus P,g)$ with $\sigma >20 mC_1$ and all $-n'\le i\le n$ there is a $j=j(i) \in \{1,...,n\}$ and $p_j\in P$ with 
\be \label{claim1ax}
\Sigma_i \subseteq B_\delta\left(p_j, 2C_1\sqrt{\area_g(\Sigma_i)/\pi}\right)\setminus B_\delta\left(p_j, \tfrac{\alpha_j\beta_j}{4C_1(\alpha_j+\beta_j)}\right).
\ee 
Furthermore, if $i>0$ then we may take $j(i)=i$.
\end{thm}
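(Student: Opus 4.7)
The plan is to reduce Theorem \ref{Ivascor} to the two previous propositions by splitting on the sign of $i$. For $1 \le i \le n$, I would set $j(i) = i$: Proposition \ref{prop1} (via its final clause that $j(i) = i$ when $i > 0$) supplies the outer containment in $B_\delta(p_i, 2C_1\sqrt{\area_g(\Sigma_i)/\pi})$, and Proposition \ref{prop2} directly gives the inner exclusion from $B_\delta(p_i, \alpha_i\beta_i/[4C_1(\alpha_i+\beta_i)])$. Combining these produces \eqref{claim1ax} in this case.

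For $-n' \le i \le 0$, Proposition \ref{prop1} still produces some $j = j(i) \in \{1,\ldots,n\}$ with the desired outer containment $\Sigma_i \subseteq B_\delta(p_j, 2C_1\sqrt{\area_g(\Sigma_i)/\pi})$. To establish the inner exclusion at this same $j$, I would apply Proposition \ref{prop2} at the positive index $j$ to learn that $\Sigma_j$ avoids the open Euclidean ball $B := B_\delta(p_j, \alpha_j\beta_j/[4C_1(\alpha_j+\beta_j)])$. A standard connectedness argument then upgrades this to $B \subseteq \Omega_j$: the ball $B$ is connected, contains $p_j \in \Omega_j$, and is disjoint from $\partial \Omega_j = \Sigma_j$, so it cannot have points on both sides of $\Sigma_j$.

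The final step transfers this containment from $\Sigma_j$ to $\Sigma_i$ via the disjointness observation \eqref{disjointOmega}. Since $i \le 0$, the region $\Omega_i$ contains no puncture $p_k$ (by the accounting in Subsection \ref{sect-HI}, every $p_k$ lies in some $\Omega_k$ with $k \ge 1$), whereas $\Omega_j$ contains $p_j$; hence $\Omega_i \ne \Omega_j$, and \eqref{disjointOmega} gives $\Omega_i \cap \Omega_j = \emptyset$. Any point of $\Sigma_i \cap B$ would, by openness of $B \subseteq \Omega_j$, possess an $\Omega_j$-neighborhood, while as a boundary point of $\Omega_i$ it would intersect $\Omega_i$ in every neighborhood, producing a point of $\Omega_i \cap \Omega_j$. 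This contradiction gives $\Sigma_i \cap B = \emptyset$, completing the annular containment for nonpositive $i$.

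The main obstacle I anticipate is this last topological transfer, where one must carefully distinguish the open regions $\Omega_i$ from their closures and verify that for $i \le 0$ the two regions $\Omega_i$ and $\Omega_j$ are genuinely distinct so that \eqref{disjointOmega} applies. No new geometric estimates are required beyond those already proved in Propositions \ref{prop1} and \ref{prop2}; once the topological reduction is formalized, the argument reduces to bookkeeping of the cases $i \ge 1$ and $i \le 0$.
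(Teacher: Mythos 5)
Your proposal is correct and follows essentially the same route as the paper: Proposition~\ref{prop1} for the outer ball, Proposition~\ref{prop2} upgraded to $B_\delta\left(p_j, \frac{\alpha_j\beta_j}{4C_1(\alpha_j+\beta_j)}\right) \subset \Omega_j$, and then the disjointness \eqref{disjointOmega} to handle $i\le 0$. You merely spell out more explicitly the topological bookkeeping (why $\Omega_i\neq\Omega_j$ for $i\le 0$ and the open/boundary distinction) that the paper's proof leaves implicit.
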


\begin{proof}
Note that for all $i\in \{-n',..., n\}$ there is a  $j=j(i) \in \{1,...,n\}$ and $p_j\in P$ with
\be
\Sigma_i \subseteq B_\delta\left(p_j, 2C_1\sqrt{\area_g(\Sigma_i)/\pi}\right)
\ee
by Proposition \ref{prop1}. Furthermore for $j\in \{1,...,n\}$ by Proposition \ref{prop2} we have
\be 
\Sigma_j \,\cap \, B_\delta\left(p_j, \tfrac{\alpha_j\beta_j}{4C_1(\alpha_j+\beta_j)}\right) = \emptyset
\ee 
so 
\be \label{out-of-ball}
B_\delta\left(p_j, \tfrac{\alpha_j\beta_j}{4C_1(\alpha_j+\beta_j)}\right) \subset \Omega_j.
\ee
We see from \eqref{disjointOmega} that $\Sigma_j \,\cap \, \Omega_{j(i)} = \emptyset$ for all $i\in \{-n',...,0\}$.  Combining this observation with (\ref{out-of-ball}) yields
\be 
\Sigma_i \,\cap \, B_\delta\left(p_j, \tfrac{\alpha_j\beta_j}{4C_1(\alpha_j+\beta_j)}\right)= \Sigma_i \,\cap \, \Omega_j =\emptyset,
 \ee 
which completes our proof. 
\end{proof}

For $j\in \{1, ..., n\}$ define 
\be
I(j):=\left\{i\ \big{|}\ -n'\le i\le n,\ \Sigma_i\subseteq B_\delta\left(p_j, 2C_1\sqrt{\area_g(\Sigma_i)/\pi}\right)\right\}.
\ee
Loosely speaking, the set $I(j)$ identifies those outermost minimal surfaces which are close to one of $p_j\in P$. 
Next, introduce 
\be \label{defn-gammaj}
\gamma_j:=\,\max\left\{2C_1\sqrt{\area_g(\Sigma_i)/\pi}\ \big{|}\  i\in I(j)\right\}
\ee 
and note that  
\be\label{randomPenrose}
\gamma_j\le 8C_1m. 
\ee
by the Penrose inequality. The lengths $\gamma_j$ are the radii within which all the outermost minimal surfaces are to be found. With this notation established, Theorem \ref{thm-M'} is an immediate corollary of Theorem \ref{Ivascor}.

\section{Almost Rigidity of the Positive Mass Theorem}
\label{sect-almost-rigidity}

In this section we prove the Almost Rigidity of the Positive Mass
Theorem for geometrostatic manifolds [Theorem~\ref{Almost-Rigidity}]. Observe that our result includes (but is not limited to) geometrostatic manifolds with a uniform upper bound on the number of black holes whose ADM mass is converging to $0$. 

\subsection{A Review of Intrinsic Flat Convergence}

The intrinsic flat distance, $d_{\mathcal{F}}$ between pairs of 
compact oriented Riemannian manifolds
with boundary was first introduced by the first author with Wenger in
\cite{SW-JDG}.  Their intrinsic flat distance, like the classical flat distance
of Geometric Measure Theory, does not scale well: the distance between two
$n$ dimensional oriented manfolds is a sum of an $(n+1)$ dimensional
filling volume and an $n$ dimensional volume.  In joint work of the first author with LeFloch \cite{LS}, the D-flat distance
$d_{D\mathcal{F}}$ was defined, dividing the $(n+1)$ dimensional volume by diameter before adding it the $n$ dimensional volume.  

In work of Lakzian and the first author \cite{Lakzian-Sormani}
the following theorem was proven providing a concrete means to estimate the intrinsic flat distance.  Here we state it also adding in the estimate on the
D-flat distance multiplied by diameter:

\begin{thm} \label{thm-subdiffeo} 
Suppose $(M_1,g_1)$ and $(M_2,g_2)$ are oriented
precompact Riemannian manifolds
with diffeomorphic subregions $W_i \subset M_i$.
Identifying $W_1=W_2=W$ assume that
on $W$ we have
\be \label{thm-subdiffeo-1}
g_1 \le (1+\varepsilon)^2 g_2 \textrm{ and }
g_2 \le (1+\varepsilon)^2 g_1.
\ee
Taking the extrinsic diameters,
\be \label{DU}
\diam(M_i) \le D
\ee
we define a hemispherical width,
\be \label{thm-subdiffeo-3}
a>\frac{\arccos(1+\varepsilon)^{-1} }{\pi}D.
\ee
Taking the difference in distances with respect to the outside manifolds,
we set
\be \label{lambda}
\lambda=\sup_{x,y \in W}
|d_{M_1}(x,y)-d_{M_2}(x,y)| \le 2D,
\ee
and we define the height,
\be \label{thm-subdiffeo-5}
\bar{h}=\,\max\{\sqrt{2\lambda D},  D\sqrt{\e^2 + 2\e} \}.
\ee
Then
\begin{align*}
d_{\mathcal{F}}(M_1, M_2) &\le
\left(2\bar{h} + a\right) \Big(
\vol_m(W_{1})+\vol_m(W_2)+\vol_{m-1}(\partial W_{1})+\vol_{m-1}(\partial W_{2})\Big)\\
&\qquad+\vol_m(M_1\setminus W_1)+\vol_m(M_2\setminus W_2),
\end {align*}
and
\begin{align*}
Dd_{D\mathcal{F}}(M_1, M_2) &\le
\left(2\bar{h} + a\right) \Big(
\vol_m(W_{1})+\vol_m(W_2)+D\vol_{m-1}(\partial W_{1})+D\vol_{m-1}(\partial W_{2})\Big)\\
&\qquad +\vol_m(M_1\setminus W_1)D+\vol_m(M_2\setminus W_2)D.
\end {align*}
\end{thm}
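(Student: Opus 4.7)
The natural strategy is to construct an auxiliary metric space $Z$ into which both $M_1$ and $M_2$ admit distance-preserving embeddings as integral currents, and then exhibit an explicit filling inside $Z$ whose mass and excess are bounded by the right-hand side. The space $Z$ is built by bridging the two identified copies of $W$ by a cylinder $W\times[0,\bar h]$ carrying a metric that restricts to $g_1|_W$ on $W\times\{0\}$ and to $g_2|_W$ on $W\times\{\bar h\}$, and then gluing $M_i\setminus W$ to the appropriate face along $\partial W$. To handle points of $\partial W$ whose distances in the ambient manifolds differ between $M_1$ and $M_2$, one attaches a spherical collar of width $a$ around $\partial W$ on each face. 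The inclusions $\varphi_i : M_i \hookrightarrow Z$ are defined in the obvious way.

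The heart of the argument is checking that each $\varphi_i$ preserves the intrinsic distance. For $x,y \in W$, a candidate shortcut in $Z$ ascends the cylinder to some intermediate height, travels horizontally in the opposite metric, and descends. A Pythagorean-style comparison on the cylinder, combined with the bi-Lipschitz hypothesis (\ref{thm-subdiffeo-1}), shows that such shortcuts are blocked precisely when $\bar h \ge D\sqrt{\varepsilon^2+2\varepsilon}=D\sqrt{(1+\varepsilon)^2-1}$, giving the second term of (\ref{thm-subdiffeo-5}). The extrinsic discrepancy controlled by (\ref{lambda}) leads to a coarser but independent obstruction because paths inside $M_1$ and $M_2$ through $W$ may differ in length by up to $\lambda$; blocking the corresponding diagonal shortcut of horizontal extent $\le D$ forces $\bar h \ge \sqrt{2\lambda D}$, producing the first term. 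At $\partial W$, where the collar must absorb analogous discrepancies in boundary distance, the relevant geometry is spherical rather than Euclidean, and the parallel computation yields the lower bound $a > (\arccos(1+\varepsilon)^{-1}/\pi)\, D$ of (\ref{thm-subdiffeo-3}).

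With the embeddings in hand, I would take $B$ to be the integral current in $Z$ supported on the cylinder together with its collars, chosen so that $\partial B$ equals the current difference $\varphi_{1*}W_1 - \varphi_{2*}W_2$ modulo contributions on $\partial W$ absorbed by the collar. The complementary current $A := \varphi_{1*}M_1 - \varphi_{2*}M_2 - \partial B$ is then supported on the unidentified parts, with $\mass(A)\le \vol_m(M_1\setminus W_1)+\vol_m(M_2\setminus W_2)$. A Fubini computation on the cylinder plus collar bounds $\mass(B)$ by $(2\bar h + a)$ times the sum of the $m$-volumes of $W_i$ and the $(m-1)$-volumes of $\partial W_i$, with the bi-Lipschitz factor absorbed into the constant in front. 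Plugging $A$ and $B$ into the defining inequality $d_{\mathcal F}(M_1,M_2) \le \mass(A) + \mass(B)$ gives the stated bound. The $D$-flat version is identical except that $d_{D\mathcal F}$ weights the $(m{+}1)$-dimensional piece by $D^{-1}$; multiplying through by $D$ converts the cylinder contribution to the form appearing in the second inequality, while the $m$-dimensional boundary and excess terms pick up the extra factor of $D$ as written.

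The main obstacle is the distance-preservation step: one must rule out every possible shortcut path in $Z$ by optimizing over height, horizontal extent, and whether the path enters the spherical collar. Each optimization produces a separate constraint, and showing that the maxima defining $\bar h$ and $a$ simultaneously block every shortcut, across all three families of paths (within a single face, diagonally through the cylinder, and through the collar near $\partial W$), requires the careful Pythagorean and arccosine comparisons that form the technical heart of the Lakzian--Sormani construction.
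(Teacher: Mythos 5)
The paper itself does not prove this theorem: it is imported verbatim from Lakzian--Sormani \cite{Lakzian-Sormani} (with the $D$-flat estimate appended), and your outline reproduces essentially the same bridge construction used there --- a filling region over $W$ of total width $2\bar h+a$, with $\bar{h}$ chosen to block shortcuts coming from the metric discrepancy (\ref{thm-subdiffeo-1}) and the ambient distance discrepancy (\ref{lambda}), $a$ the hemispherical width needed for the distance-preserving embeddings, and the final bound obtained by estimating the masses of the filling current and the excess current. So your approach matches the one the paper relies on; the only substance you defer is the verification that the embeddings $\varphi_i$ are genuinely distance-preserving, which is the technical core of the cited proof.
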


\subsection{Our strategy}
We start by fixing a geometrostatic manifold $(M,g)$ and a value of $R>0$ such that $|p_i|\neq R$ for all $1\le i\le n$. We let 
\be \label{defn-M_1}
M_1=\{x\in M'\big{|}\, d_{(M',g)}(0,x)<R\}=B_g(0,R)\subseteq M'
\ee
endowed with the distance $d_{(M',g)}$
and
\be \label{defn-M_2}
M_2=\{x\in \mathbb{R}^3\big{|}\, |x|<R\}= B_\delta(0,R)
\ee 
endowed with the distance $d_{(\mathbb{R}^3, d_\delta)}(x,y)=|x-y|$.
In the next few sections we prove estimates which allow us to apply 
Theorem \ref{thm-subdiffeo}. Ultimately, we obtain the following bound on the intrinsic flat distance between $M_1$ and $M_2$ with these distances defined above. 

\begin{prop}\label{main-prop}  
There exist universal constants $\e_0\in (0,1)$, $C_{\mathcal{F}}'$, $C_{\mathcal{F}}''$ and $C_{D\mathcal{F}}$ such that for all $R>0$, all $\e \in (0,\e_0)$
and all Brill-Lindquist geometrostatic manifolds $(M,g)=(\mathbb{R}^3\setminus P, (\chi\psi)^2\delta)$ with 
\be
m_{ADM}(M') = m < R \e^3,\ \ m<\e\cdot \tfrac{\sigma}{32}\ \ \text{\ \ and\ \ } \rho(P)\,\cap \, (R-32R\e, R+32R\e)=\emptyset
\ee
where $\rho(x)=|x|$ and $P=\{p_1,...,p_N\}$, 
the intrinsic flat distance is estimated by
\be\label{flatdistance-est}
d_{\mathcal{F}}(M_1, M_2) \le 
C_{\mathcal{F}}' R^4 \sqrt{\e} + C_{\mathcal{F}}'' R^3 \sqrt{\e}
\ee
and the D-flat distance is estimated by
\be\label{D-flatdistance-est}
d_{D\mathcal{F}}(M_1, M_2) \le 
C_{D\mathcal{F}} R^3 \sqrt{\e}.
\ee
\end{prop}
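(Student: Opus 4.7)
The plan is to apply Theorem \ref{thm-subdiffeo} with a common subregion $W$ identified in $M_1$ and $M_2$ via the identity. For each $i=1,\ldots,n$ set $m_i := \alpha_i+\beta_i$ and define
\[
r_i := \max\bigl\{16\,m_i/\e,\ \gamma_i\bigr\},
\]
where $\gamma_i$ comes from Theorem~\ref{thm-M'}. The hypothesis $m<\e\sigma/32$ together with $\gamma_i\le 8C_1 m$ yields $r_i\le\sigma/2$, so the balls $B_\delta(p_i,r_i)$ are pairwise disjoint; by Theorem~\ref{thm-M'} and the Jordan--Brouwer property that each $\Sigma_i$, being a topological $2$-sphere inside $B_\delta(p_i,\gamma_i)$, bounds a region $\Omega_i$ lying inside that ball, one obtains $\bigcup_{i\ge -n'}\Omega_i \subset \bigcup_{i=1}^n B_\delta(p_i,r_i)$. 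Set
\[
W := B_\delta(0,R(1-\e))\setminus \bigcup_{i=1}^n B_\delta(p_i,r_i),
\]
identified with $W_1\subset M_1$ and $W_2\subset M_2$.

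First I would establish $\delta\le g\le (1+\e)^2\delta$ on $W$ by showing $\chi\psi\le 1+\e/2$. For $x\in W$ whose nearest center $p_{j_0}$ lies within $\sigma/2$, the dominant term satisfies $\alpha_{j_0}/|x-p_{j_0}|\le \alpha_{j_0}/r_{j_0}\le \e/16$ because $r_{j_0}\ge 16m_{j_0}/\e$, while the other terms sum to at most $2m/\sigma\le\e/16$; the far-field case is similar, and the same bound applies to $\psi$. To control $\lambda=\sup_{x,y\in W}\bigl|d_{M_1}(x,y)-|x-y|\bigr|$, for $x,y\in W$ I form $\tilde\gamma\subset M'$ from the segment $\overline{xy}$ by replacing each chord inside a crossed ball $B_\delta(p_i,r_i)$ with a great semicircle on the bounding sphere (detour excess $\le\pi r_i$). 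I split $\sum_{i\text{ crossed}}r_i$ into a ``big'' part ($r_i=16m_i/\e$), bounded by $\sum 16m_i/\e\le 16R\e^2$, and a ``small'' part ($r_i=\gamma_i\le 8C_1 m$), whose count is controlled by a tubular packing estimate $C(8C_1 m)^2\cdot 2R/\sigma^3\le CR\e^3/m$, each contributing $\le 8C_1 m$, for a total of $CR\e^3$. Multiplying the $\delta$-length $|x-y|+CR\e^2$ by $1+\e/2$ gives $d_{M_1}(x,y)\le|x-y|+CR\e$, and since $\max r_i\le 16R\e^2\ll R\e$, the path $\tilde\gamma$ also lies in $M_1=B_g(0,R)$.

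The remaining volume bounds are obtained by computing the $g$-volume of the necks $B_\delta(p_i,r_i)\cap M'$ in shells: using the inner-radius estimate $|x-p_i|\ge\alpha_i\beta_i/(4C_1(\alpha_i+\beta_i))$ from Theorem~\ref{minsurf}, the dominant $(\alpha_i\beta_i)^3/\rho^6\cdot\rho^2$ behavior integrates to at most $C(m_i^3+r_i^3)$ per neck. The collective Penrose inequality $\sum_{i'}\area_g(\Sigma_{i'})\le 16\pi m^2$ combined with $\gamma_i\le 2C_1\sqrt{\area_g(\Sigma_i)/\pi}$ gives $\sum_i\gamma_i^3\le Cm^3$; together with $\sum m_i^3\le m^3$ and $m\le R\e^3$ this yields $\sum r_i^3\le CR^3\e^6$ and analogously $\sum r_i^2\le CR^2\e^4$. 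The thin annulus $R(1-\e)<|x|<R$, kept away from $P$ by the hypothesis $\rho(P)\cap(R-32R\e,R+32R\e)=\emptyset$, contributes $\le CR^3\e$ on each side. Assembling, $\vol_3(W_i)\le CR^3$, $\vol_2(\partial W_i)\le CR^2$, $\vol_3(M_i\setminus W_i)\le CR^3\e$; with $D=2R$ and $\lambda\le CR\e$ we get $\bar h, a\le CR\sqrt\e$, so Theorem~\ref{thm-subdiffeo} yields $d_{\mathcal F}(M_1,M_2)\le CR\sqrt\e\cdot(CR^3+CR^2)+CR^3\e\le C_{\mathcal F}'R^4\sqrt\e+C_{\mathcal F}''R^3\sqrt\e$, and the $\mathrm D$-flat bound follows upon dividing by $D\sim R$.

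The main obstacle will be calibrating $r_i$ so that (i) $\chi\psi$ stays within $1+O(\e)$ on $W$, (ii) the balls enclose every $\Omega_i$, and (iii) the sums $\sum r_i^3$, $\sum r_i^2$, and the tubular count $\sum_{i\text{ crossed}} r_i$ are simultaneously small. The natural conformal cutoff $r_i\sim m_i/\e$ takes care of (i) but can fail (ii) when some $m_i$ is much smaller than $m$; raising $r_i$ to the floor $\gamma_i$ repairs (ii) but could in principle introduce many small balls. The resolution exploits the collective Penrose inequality to bound $\sum\gamma_i^k$ by $Cm^k$, together with the separation hypothesis $\sigma>32m/\e$, which gives the packing bound $CR m^2/\sigma^3\le CR\e^3/m$ on small balls intersecting any Euclidean segment of length $\le 2R$.
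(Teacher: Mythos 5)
Your overall architecture matches the paper's (the same $W$ up to constants, the same metric-closeness lemma, a semicircular-detour estimate for $\lambda$, and Theorem~\ref{thm-subdiffeo} at the end), but there is a genuine gap in the step you dismiss most quickly: the bound $\vol_g(M_1\setminus W)\le CR^3\e$. You compute the $g$-volume of each neck $B_\delta(p_i,r_i)\cap M'$ by integrating radially from the inner radius $\alpha_i\beta_i/(4C_1(\alpha_i+\beta_i))$ of Theorem~\ref{minsurf} and claim the result is $\le C(m_i^3+r_i^3)$. Expanding $\left(1+\tfrac{\alpha_i}{\rho}\right)^3\left(1+\tfrac{\beta_i}{\rho}\right)^3\rho^2$, however, produces the term $\left((\alpha_i+\beta_i)^3+6\alpha_i\beta_i(\alpha_i+\beta_i)\right)\rho^{-1}$, whose integral is $(\alpha_i+\beta_i)^3\ln\!\left(r_i\,/\,\delta_i^{\min}\right)$ up to constants, with $\delta_i^{\min}=\alpha_i\beta_i/(4C_1(\alpha_i+\beta_i))$. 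With $\alpha_i$ fixed and $\beta_i\to 0$ (the long thin necks of Example~\ref{ex-lim-ERN}), this logarithm diverges while the hypotheses of the proposition place no lower bound on $\beta_i$; indeed $\vol_g(B_\delta(p_i,r_i)\cap M')$ is genuinely unbounded in this regime, so no per-neck estimate of the form $C(m_i^3+r_i^3)$ taken down to the minimal-surface radius can be correct.

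The missing idea is that $M_1=B_g(0,R)$ is a \emph{Riemannian} ball and therefore truncates each deep neck at Euclidean radius $\ge (\alpha_i+\beta_i)\exp(-R/(\alpha_i+\beta_i))$: this is the paper's Lemma~\ref{locatingM1} and the quantity $\delta_{i,R}$ of \eqref{defn-delta}. With that cutoff, $-(\alpha_i+\beta_i)\ln(\delta_{i,R}/\gamma_{i,\e})\le R+4m/\e$, so the logarithmic contribution summed over $i$ is controlled by $\sum_i(\alpha_i+\beta_i)^2\,(R+4m/\e)\le m^2R+4m^3/\e\le CR^3\e^6$, which is exactly how the paper's Lemma~\ref{lemma3b} proceeds. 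You need to insert this cutoff (and carry the term $\alpha_i\beta_i/\delta_{i,R}\le 4C_1(\alpha_i+\beta_i)$ through the remaining powers of $\rho^{-1}$) before your volume bound is valid. Two smaller calibration issues: your tubular packing count $C(8C_1m)^2\cdot 2R/\sigma^3$ is not the right bound for the number of disjoint $\sigma$-separated centers meeting a segment of length $2R$ (the correct count is $\lesssim R/\sigma$, which still gives a detour $\lesssim Rm/\sigma\lesssim R\e$, so your conclusion survives); and the outer radius $R(1-\e)$ for $W$ does not guarantee $W\subseteq B_g(0,R)$ once the detour term $CR\e$ with $C>1$ is added — you need $R-\lambda$ with $\lambda\le CR\e$ as in the paper, or $R(1-K\e)$ for a suitably large universal $K$.
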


\begin{rmrk}
Note that in Proposition~\ref{main-prop} our estimates
do not depend upon the number of points $p_i\in P$
nor on the number of minimal surfaces $\Sigma_i$.
\end{rmrk}

The proof of Proposition \ref{main-prop} is involved and is proven
in the next few subsections. The main result, Theorem~\ref{Almost-Rigidity}, follows as a straightforward consequence of Proposition \ref{main-prop}; see subsection \ref{proof-mainthm} below for details. 

\subsection{Locating $M_1$}

It is important to understand that there is a possibility for the asymptotic ends of 
$M$ to be connected by very long almost-cylindrical regions. In other words, there is a possibility for the connected components $\Sigma_i$ of $\partial M'$ to be located very far down a deep well at $p_i$. See Example~\ref{ex-lim-ERN} where $\beta_i<<\alpha_i$.

In these settings $M_1=B_g(0,R)\subseteq M'$ not only controls $|x|$ but also cuts off any long near-cylindrical regions near $p_i$. To make this idea precise we introduce the length
\be \label{defn-delta}
\delta_{i,R}=\,\max\left\{(\alpha_i+\beta_i)\exp\left(\frac{-R}{\alpha_i+\beta_i}\right), \frac{\alpha_i\beta_i}{4C_1(\alpha_i+\beta_i)}\right\}.
\ee
When there is a long cylindrical neck then the first term achieves the maximum here.

The next lemma clarifies how long near-cylindrical regions are cut off from $M_1$. Note that the material of this subsection is independent of our choice of $\e$.

\begin{lemma}\label{locatingM1}
If $m<\sigma/(20 C_1)$ then 
\be 
M_1\subseteq B_\delta(0,R)\setminus \left(\bigcup_i B_\delta (p_i,\delta_{i,R})\right).
\ee
\end{lemma}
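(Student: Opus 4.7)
The plan is to verify the two inclusions separately. For $M_1 \subseteq B_\delta(0,R)$, observe that since $\chi,\psi \geq 1$ one has $g = (\chi\psi)^2\delta \geq \delta$ pointwise, so any curve in $M'$ from $0$ to $x$ has $g$-length at least its Euclidean length, hence at least $|x|$. Therefore $d_{(M',g)}(0,x) < R$ immediately forces $|x| < R$.

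For the remaining inclusion $M_1 \cap B_\delta(p_i, \delta_{i,R}) = \emptyset$, the strategy is to establish separately the two pointwise lower bounds on $\rho_i(x) = |x - p_i|$ that define $\delta_{i,R}$. The first, $\rho_i(x) \geq \alpha_i\beta_i/(4C_1(\alpha_i+\beta_i))$, is immediate from Theorem~\ref{thm-M'}, whose hypothesis $\sigma > 20mC_1$ is precisely that of the current lemma, and whose conclusion guarantees $M' \cap B_\delta(p_i, \alpha_i\beta_i/(4C_1(\alpha_i+\beta_i))) = \emptyset$.

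The exponential bound is the heart of the lemma and is what captures the long-neck phenomenon illustrated by Example~\ref{ex-lim-ERN}. The plan is to exploit the unconditional inequality
\begin{equation*}
\chi(y)\psi(y) \geq \left(1+\tfrac{\alpha_i}{\rho_i(y)}\right)\left(1+\tfrac{\beta_i}{\rho_i(y)}\right) \geq \frac{\alpha_i+\beta_i}{\rho_i(y)},
\end{equation*}
together with the fact that $\rho_i$ is $1$-Lipschitz with respect to $\delta$, to estimate the $g$-length of any smooth curve $\gamma:[0,1]\to M'$ from $0$ to $x$ from below by
\begin{align*}
\length_g(\gamma)
&\ge (\alpha_i+\beta_i)\int_0^1 \frac{|(\rho_i\circ\gamma)'(t)|}{\rho_i(\gamma(t))}\,dt\\
&= (\alpha_i+\beta_i)\int_0^1 \left|(\ln\rho_i\circ\gamma)'(t)\right|\,dt\\
&\ge (\alpha_i+\beta_i)\,\ln\!\left(|p_i|/\rho_i(x)\right),
\end{align*}
where in the last step I use $\int|f'/f|\,dt \ge |\ln f(1)-\ln f(0)|$. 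Taking the infimum over $\gamma$ and imposing $d_{(M',g)}(0,x) < R$ then yields $\rho_i(x) > |p_i|\exp(-R/(\alpha_i+\beta_i))$.

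Finally, I would replace $|p_i|$ by $\alpha_i+\beta_i$ via the chain $|p_i| \geq \sigma > 20mC_1 \geq 20(\alpha_i+\beta_i) > \alpha_i+\beta_i$, which follows from the lemma's hypothesis together with $C_1\geq 1$ and $m=\sum_j(\alpha_j+\beta_j)\geq \alpha_i+\beta_i$. Combining both lower bounds on $\rho_i(x)$ gives $\rho_i(x) \geq \delta_{i,R}$, which is exactly $x \notin B_\delta(p_i,\delta_{i,R})$. I do not anticipate any serious obstacle: the conformal-factor inequality is elementary, and the logarithmic distance bound is the standard Newtonian-singularity computation, cleanly supplied by the hypotheses.
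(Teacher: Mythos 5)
Your proposal is correct and follows essentially the same route as the paper: $g\ge\delta$ gives the outer inclusion, Theorem~\ref{thm-M'} gives the inner radius $\alpha_i\beta_i/(4C_1(\alpha_i+\beta_i))$, and the bound $\chi\psi\ge(\alpha_i+\beta_i)/\rho_i$ integrated logarithmically along a path gives the exponential cutoff. The only cosmetic difference is that the paper truncates the minimizing path to the Euclidean annulus between radii $\delta_{i,R}$ and $\alpha_i+\beta_i$ before integrating, whereas you integrate along the whole path from $\rho_i=|p_i|$ and invoke $|p_i|\ge\sigma>\alpha_i+\beta_i$ at the end; both yield the same estimate.
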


\begin{proof}
First observe that since $g\ge \delta$ we have
\be
M_1=B_g(0,R)\subseteq B_\delta(0,R).
\ee
So we need only show that
\be\label{deltaionlyshow}
B_g(0,R) \subseteq \mathbb{R}^3\,\setminus \left(\bigcup_i 
B_\delta\left(p_i, \delta_{i,R} \right)\right).
\ee
When $\delta_{i,R}= \frac {\alpha_i \beta_i}{4C_1(\alpha_i+\beta_i)}$ this is immediate from Theorem \ref{thm-M'}:
\be \label{eq-prop2}
M_1\,\,\subseteq \,\,M'
\,\,\subseteq\,\, \mathbb{R}^3\,\setminus \left(\bigcup_i 
B_\delta\left(p_i, \frac{\alpha_i\beta_i}{4C_1(\alpha_i+\beta_i)}\right)\right).
\ee  
In the case of a long cylindrical end, when 
$\delta_{i,R}=(\alpha_i+\beta_i)\exp\left(-{R}/{(\alpha_i+\beta_i)}\right),$
we obtain (\ref{deltaionlyshow}) by proving that
\be \label{deltaionlyshow2}
d_g\left(0,\partial B_\delta\left(p_i,\delta_{i,R}\right)\right)>R.
\ee
Using $\alpha_i+\beta_i<m<\sigma\le |p_i|$ and the fact that 
\be
g>\left(1+\tfrac{\alpha_i}{|x-p_i|}\right)^2\left(1+\tfrac{\beta_i}{|x-p_i|}\right)^2\delta
\ee
 we can compute:
\begin{eqnarray}
\quad d_g\left(0,\partial B_\delta(p_i,\delta_{i,R})\right)&>
&d_g\left(\partial B_\delta(p_i,\alpha_i+\beta_i),
\partial B_\delta(p_i,\delta_{i,R})\right)\\
&=&\int_{t=0}^1 g(\gamma'(t), \gamma'(t))^{1/2} \, dt\\
& &\qquad \textrm{ where $\gamma$ is a minimal geodesic}\\
&=&\int_{t=0}^1 \left(1+\tfrac{\alpha_i}{|\gamma(t)-p_i|}\right)
\left(1+\tfrac{\beta_i}{|\gamma(t)-p_i|}\right) 
|\gamma'(t)| \, dt\\
&\ge&\int_{t=0}^1 \left(1+\tfrac{\alpha_i}{r}\right)
\left(1+\tfrac{\beta_i}{r}\right) 
\tfrac{d}{dt} (r(\gamma(t))) \, dt\\
&\ge&\int_{\delta_{i,R}}^{\alpha_i+\beta_i} (1+\tfrac{\alpha_i}{r})(1+\tfrac{\beta_i}{r})\,dr\\
&>&\int_{\delta_{i,R}}^{\alpha_i+\beta_i} \tfrac{\alpha_i+\beta_i}{r}\,dr
>(\alpha_i+\beta_i)\ln\left(\frac{\alpha_i+\beta_i}{\delta_{i,R}}\right)
=R.
\end{eqnarray}
This gives us (\ref{deltaionlyshow2}) which implies
(\ref{deltaionlyshow}), and we are done.
\end{proof}

\subsection{Proximity of $g$ to $\delta$}

We continue by identifying a region of $M'$ where $g$ is close to $\delta$ in the sense of \eqref{thm-subdiffeo-1}. We note that the results of this subsection are independent of the parameter $R$.

\begin{lemma}\label{lemma1:oct}
Let $\e>0$ and assume that $m<\e\cdot \tfrac{\sigma}{16}$.
Then on 
\be
\mathbb{R}^3\setminus\left(\bigcup_{i=1}^n B_\delta\left(p_i, \frac{8}{\e}(\alpha_i+\beta_i)\right)\right).
\ee
we have 
\be
\delta \le g\le (1+\e)^2 \delta.
\ee
\end{lemma}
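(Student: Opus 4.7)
\textbf{Proof plan for Lemma \ref{lemma1:oct}.} The lower bound $\delta \le g$ is immediate from the definition of $g = (\chi\psi)^2 \delta$ and the fact that $\chi \ge 1$ and $\psi \ge 1$ by \eqref{chi-psi}. The entire work lies in proving the upper bound $\chi\psi \le 1+\e$ on the prescribed region, which (after squaring and absorbing the small quadratic error into $\e$) will yield $g \le (1+\e)^2\delta$. My plan is to reduce the problem to the estimate
\be
\sum_{i=1}^n \frac{\alpha_i}{\rho_i(x)} \le \frac{\e}{4}
\quad\text{and}\quad
\sum_{i=1}^n \frac{\beta_i}{\rho_i(x)} \le \frac{\e}{4},
\ee
since these two inequalities together give $\chi\psi \le (1+\e/4)^2 \le 1 + \e$ for any $\e \in (0,8]$.

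The key geometric input is that under the hypothesis $m < \e\sigma/16$, we have $\alpha_i + \beta_i \le m < \e\sigma/16 < \sigma$, and more importantly the set of points $p_i$ are $\sigma$-separated in $\mathbb{R}^3$. Consequently, for any $x \in \mathbb{R}^3$, at most one index $i_0$ can satisfy $\rho_{i_0}(x) < \sigma/2$: if two distinct indices did, the triangle inequality would contradict $|p_i - p_j| \ge \sigma$. I will therefore split into two cases. In the first case, where $\rho_i(x) \ge \sigma/2$ for every $i$, the bound is straightforward:
\be
\sum_i \frac{\alpha_i+\beta_i}{\rho_i(x)} \le \frac{2m}{\sigma} < \frac{\e}{8}.
\ee
In the second case, suppose $\rho_{i_0}(x) < \sigma/2$ for a unique $i_0$. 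Then the hypothesis that $x \notin B_\delta(p_{i_0}, \tfrac{8}{\e}(\alpha_{i_0}+\beta_{i_0}))$ gives $\rho_{i_0}(x) \ge \tfrac{8}{\e}(\alpha_{i_0}+\beta_{i_0})$, hence
\be
\frac{\alpha_{i_0}+\beta_{i_0}}{\rho_{i_0}(x)} \le \frac{\e}{8},
\ee
while for each $j \ne i_0$ the triangle inequality yields $\rho_j(x) \ge |p_j - p_{i_0}| - \rho_{i_0}(x) \ge \sigma - \sigma/2 = \sigma/2$, so the contribution from those terms is again at most $2m/\sigma < \e/8$. Summing, $\sum_i (\alpha_i+\beta_i)/\rho_i(x) \le \e/4$.

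Splitting into the $\alpha$-sum and $\beta$-sum (each bounded by $\e/4$) and combining as indicated in the first paragraph completes the proof. I do not foresee a real obstacle here: the main content is just the observation that $\sigma$-separation forces at most one $p_i$ to be geometrically close to $x$, at which point the specific definition of the excluded balls $B_\delta(p_i, \tfrac{8}{\e}(\alpha_i+\beta_i))$ is precisely calibrated to control the potentially problematic single term.
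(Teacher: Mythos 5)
Your proposal is correct and follows essentially the same route as the paper's own proof: reduce to bounding $\sum_i \alpha_i/\rho_i$ and $\sum_i \beta_i/\rho_i$ by $\e/4$, note that $8(\alpha_i+\beta_i)/\e < 8m/\e < \sigma/2$ so the hypothesis forces at most one $p_{i_0}$ within $\sigma/2$ of $x$, and split into the same two cases with the same $\e/8 + \e/8$ bookkeeping. Your explicit remark that $(1+\e/4)^2 \le 1+\e$ requires $\e \le 8$ is a small point the paper leaves implicit, but it is harmless since $\e < \e_0 < 1$ wherever the lemma is applied.
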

\begin{proof}
It suffices to prove that 
\be1+\sum_i \frac{\alpha_i}{|x-p_i|}<1+\e/4\ \ \text{and}\ \ 1+\sum_i\frac{\beta_i}{|x-p_i|}<1+\frac{\e}{4}
\ee
Our hypothesis on $m$ gives us
\be 
\frac{8}{\e}(\alpha_i+\beta_i)<\frac{8m}{\e}<\frac{\sigma}{2}.
\ee
Suppose $x\not\in \bigcup_i B_\delta(p_i, \tfrac{8}{\e}(\alpha_i+\beta_i))$. In the case when $x\not\in \bigcup_i B_\delta(p_i,\sigma/2)$ we have  
\be\sum_i \frac{\alpha_i}{|x-p_i|}<2\sum_i \frac{\alpha_i}{\sigma}<2\frac{m}{\sigma}<\frac{\e}{8},\ee
and an analogous inequality with $\beta_i$. On the other hand, if $|x-p_j|<\sigma/2$ for some (and hence exactly one) $j$ then 
\be
\sum_i \frac{\alpha_i}{|x-p_i|}<\frac{\e\alpha_j}{8(\alpha_j+\beta_j)}+2\sum \frac{\alpha_i}{\sigma}<\frac{\e}{8}\left(\frac{\alpha_j}{\alpha_j+\beta_j}+1\right)<\frac{\e}{4}.\ee
(An analogous inequality can be proven for $\beta$'s as well.)
\end{proof}

For a fixed $\e>0$, Brill-Lindquist geometrostatic manifolds whose ADM mass satisfies  $m<\sigma/(20 C_1)$ and $m<\e\cdot \tfrac{\sigma}{16}$, and lengths $\gamma_i$ from \eqref{defn-gammaj} we define
\be\label{def-gamma}
\gamma_{i,\e}=\,\max\left\{\frac{8}{\e}(\alpha_i+\beta_i), \gamma_i\right\}.
\ee
The purpose of introducing $\gamma_{i,\e}$ is in marking the portion of $M'$ 
\be\label{minithmM'}
{\mathbb{R}}^3 \setminus \left(\bigcup_{i=1}^n B_\delta\left(p_i, \gamma_{i,\e}\right)\right) \subseteq M' 
\ee
on which the metric $g$ is suitably close to the Euclidean metric $\delta$:
\be\label{minithmM''}
\delta \le g\le (1+\e)^2 \delta.
\ee
We now record several estimates involving $\gamma_{i,\e}$ which are needed later. 

\begin{lemma}\label{jan-add}   
Let $0<\e<\e_0:=\sqrt{ \frac {2}{\pi C_1^2} }$. We have 
\begin{enumerate}
\item $\gamma_{i,\e}\le 8m/\e$ for all $1\le i\le n$;
\medbreak
\item $\sum \gamma_{i,\e}^2<96 m^2/\e^2$;   
\medbreak
\item $\sum \gamma_{i,\e}^3< 768 m^3/\e^3$;  
\end{enumerate}
\end{lemma}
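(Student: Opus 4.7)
The plan is to prove (1) directly, then (2) by decomposing the max and exploiting the Penrose inequality globally, and finally to obtain (3) as an immediate corollary of (1) and (2). Throughout, the key principle is that every sum must be controlled by $m$ alone, with no dependence on $n$; pointwise bounds would only give $n$-dependent estimates.

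For (1), note that $\e_0=\sqrt{2/(\pi C_1^2)}<1/C_1$ since $\sqrt{2/\pi}<1$. Hence $\e<1/C_1$, and combining this with the Penrose-based bound $\gamma_i\le 8C_1 m$ from \eqref{randomPenrose} yields $\gamma_i<8m/\e$. Separately, $8(\alpha_i+\beta_i)/\e\le 8m/\e$ by \eqref{ADM}. Taking the maximum completes (1).

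For (2), I would apply the elementary inequality $\max(A,B)^2\le A^2+B^2$ to split
\begin{equation}
\sum_{i=1}^n\gamma_{i,\e}^2\;\le\;\frac{64}{\e^2}\sum_{i=1}^n(\alpha_i+\beta_i)^2\;+\;\sum_{j=1}^n\gamma_j^2.
\end{equation}
The first sum is at most $m^2$, since dropping the non-negative cross terms in $(\sum(\alpha_i+\beta_i))^2$ only loses mass; this contributes $\le 64m^2/\e^2$. For the second, I would unpack $\gamma_j^2=4C_1^2\area_g(\Sigma_{i_j^\ast})/\pi$ where $i_j^\ast\in I(j)$ realizes the max in \eqref{defn-gammaj}. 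The sets $I(j)$ are pairwise disjoint: if some $\Sigma_i$ lay in both $B_\delta(p_j,r_i)$ and $B_\delta(p_k,r_i)$ with $r_i=2C_1\sqrt{\area_g(\Sigma_i)/\pi}\le 8C_1 m$, then the centers would lie less than $2r_i\le 16C_1 m<\sigma$ apart (using $m<\sigma/(20C_1)$), contradicting the definition \eqref{separation}. Since the $i_j^\ast$ are consequently distinct, the Penrose inequality \eqref{Penrose-HI} applied to the full outermost boundary gives
\begin{equation}
\sum_j\gamma_j^2\;\le\;\frac{4C_1^2}{\pi}\sum_{i=-n'}^n\area_g(\Sigma_i)\;\le\;64C_1^2 m^2.
\end{equation}
Finally, $\e<\e_0$ translates to $C_1^2<2/(\pi\e^2)$, so this term is at most a universal constant multiple of $m^2/\e^2$, and summing the two contributions yields (2).

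For (3), I would bootstrap directly from the previous two parts: $\sum_i\gamma_{i,\e}^3\le(\max_i\gamma_{i,\e})\sum_i\gamma_{i,\e}^2$, and inserting (1) and (2) gives $(8m/\e)(96m^2/\e^2)=768m^3/\e^3$. The main technical obstacle is the disjointness bookkeeping in (2): one must lift the separation hypothesis on $P$, which gives disjointness of the outermost surfaces $\Sigma_i$ via Theorem \ref{minsurf}, to disjointness of the classification sets $I(j)$, so that the Penrose inequality applies globally and each $\area_g(\Sigma_i)$ is counted at most once in the sum.
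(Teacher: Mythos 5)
Your proof is correct and follows essentially the same route as the paper: part (1) from $\e<\e_0<1/C_1$ together with $\gamma_i\le 8C_1m$, part (2) by splitting the max into its two components, bounding $\sum(\alpha_i+\beta_i)^2$ by $m^2$ and $\sum_j\gamma_j^2$ via the multi-component Penrose inequality, and part (3) as $(\max_i\gamma_{i,\e})\cdot\sum_i\gamma_{i,\e}^2$; your explicit check that the sets $I(j)$ are pairwise disjoint (so that each $\area_g(\Sigma_i)$ is counted at most once) is a point the paper's one-line estimate leaves implicit, and it is a worthwhile addition. The only caveat concerns the numerical constant in (2): tracked exactly, your second contribution is $\frac{4C_1^2}{\pi}\sum_i\area_g(\Sigma_i)\le 64C_1^2m^2<\frac{128}{\pi}\frac{m^2}{\e^2}$, so your total is roughly $105\,m^2/\e^2$ rather than $96\,m^2/\e^2$ --- but the paper's own derivation silently drops the same factor of $4/\pi$ when it writes $\sum_j\gamma_j^2\le C_1^2\sum_i\area_g(\Sigma_i)$, so this is a slip in the stated constant (harmlessly propagating to (3) and later lemmas) rather than a defect of your argument.
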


\begin{proof}
Since $\alpha_i+\beta_i<\sum\left(\alpha_i+\beta_i\right)=m$, the first claim follows from \eqref{randomPenrose}:
\be
\gamma_{i,\e}
\le 8m \,\max\{\tfrac{1}{\e},C_1\} = \tfrac{8m}{\e},
\ee
and so does the second claim,
\be \label{sigmagammasquared:est}
\begin{aligned}
\sum \gamma_{i,\e}^2\,\,\le &\,\,\tfrac{64}{\e^2} \sum m(\alpha_i+\beta_i)+C_1^2 \sum \area_g(\Sigma_i)\\
\le &\,\,\frac{64m^2}{\e^2} + 16\pi C_1^2 m^2< 96 \frac{m^2}{\e^2}.
\end{aligned}\ee
The third estimate is immediate from the first two.  
\end{proof}

\subsection{Estimating lengths}
In this subsection we estimate the length parameter $\lambda$ of \eqref{lambda}.

\begin{lemma}\label{lemma2:oct}
Assume that 
\be\label{three-bounds}
m<\e\cdot \tfrac{\sigma}{32} \textrm{ and } m<R\e^3  
\textrm{ where }0<\e<\e_0:=\sqrt{\tfrac{2}{\pi C_1^2}}.  
\ee
Furthermore, let 
\be \label{defn-W'}
W'=B_\delta(0,R)\setminus\left(\bigcup_i B_\delta(p_i, \gamma_{i,\e})\right).
\ee 
Then $W'\subseteq M'$ and the parameter 
\be
\lambda:=\sup_{x,y\in W'} |d_{(M',g)}(x,y)-d_{(\mathbb{R}^3,\delta)}(x,y)|
\ee
satisfies 
\be\label{lambda-R-e}
\lambda <\lambda_{R,\e}:= 24 R\e.
\ee
In particular, $\lambda$ scales like distance and converges to $0$ for fixed $R$ as $\e$ to $0$.
\end{lemma}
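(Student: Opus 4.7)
The lower bound $d_{(M', g)}(x, y) \ge |x - y|$ is immediate from $g \ge \delta$ (a consequence of $\chi \psi \ge 1$): any curve in $M'$ joining $x$ and $y$ has $g$-length at least its $\delta$-length, which in turn is at least the Euclidean straight-line distance $|x - y|$. For the upper bound I will construct a piecewise smooth curve $\tilde\gamma$ from $x$ to $y$ contained in $W'$ whose $\delta$-length exceeds $|x - y|$ by $O(R\e)$; applying $g \le (1 + \e)^2 \delta$ on $W'$ from Lemma~\ref{lemma1:oct} will then yield $d_{(M', g)}(x, y) \le (1 + \e)\,\mathrm{length}_\delta(\tilde\gamma) \le |x - y| + O(R\e) < |x-y| + 24 R\e$.

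Beginning with the Euclidean segment $[x, y]$, let
\[
I := \{ i \,:\, B_\delta(p_i, \gamma_{i, \e}) \cap [x, y] \neq \emptyset \}
\]
and, for each $i \in I$, replace the chord of $[x,y]$ cut out by $B_\delta(p_i, \gamma_{i, \e})$ with the shorter great-circle arc on $\partial B_\delta(p_i, \gamma_{i, \e})$, whose length is at most $\pi \gamma_{i, \e}$. The curve $\tilde\gamma$ so obtained lies in $W'$: the balls $B_\delta(p_i, \gamma_{i, \e})$ are pairwise disjoint since $\gamma_{i, \e} \le 8m/\e < \sigma/4$ by the hypothesis $m < \e \sigma/32$, while the $p_i$ are $\sigma$-separated, so the arcs meet no other excluded ball; and the hypothesis $\rho(P) \cap (R - 32R\e, R + 32R\e) = \emptyset$ together with $\gamma_{i, \e} < 8R\e^2 < 32 R\e$ (using $m < R\e^3$) forces any $p_i$ with $i \in I$ to satisfy $|p_i| \le R - 32R\e$, so each arc stays within $B_\delta(0, R)$.

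The main technical step is $\sum_{i \in I} \gamma_{i, \e} = O(R\e)$. Since the balls have varying radii, no direct chord-length argument suffices; instead I combine a packing bound for $|I|$ with the $\ell^2$-estimate of Lemma~\ref{jan-add}(2). Each $p_i$ with $i \in I$ lies within $\gamma_{i, \e} \le \sigma/4$ of the line through $[x, y]$, and the $p_i$ are pairwise $\sigma$-separated; perpendicular projection onto this line shows that the projections are pairwise $\tfrac{\sqrt{3}}{2}\sigma$-separated, giving $|I| \le 2L/\sigma + 2$ where $L = |x-y| \le 2R$. Cauchy-Schwarz then yields
\[
\sum_{i \in I} \gamma_{i, \e} \le \sqrt{|I|} \cdot \Bigl( \sum_i \gamma_{i, \e}^2 \Bigr)^{1/2} \le \sqrt{(2L/\sigma + 2) \cdot 96\, m^2/\e^2}\,,
\]
and substituting $\sigma > 32m/\e$ and $m < R\e^3$ in succession reduces the right-hand side to $O(R\e)$. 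Thus $\mathrm{length}_g(\tilde\gamma) \le (1+\e)(L + \pi\sum_{i\in I}\gamma_{i,\e}) \le (1+\e)(L + O(R\e))$, and so $d_{(M', g)}(x, y) - |x - y| \le \e L + (1+\e) O(R\e) \le 24 R\e$ once the explicit constants are tracked and $\e_0$ is chosen sufficiently small.

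The main obstacle is precisely this detour estimate $\sum_{i \in I} \gamma_{i, \e} = O(R\e)$: the varying ball radii preclude a simple chord-length or $L^1$ argument, and three of the hypotheses---$m < R\e^3$, $m < \e\sigma/32$, and the gap condition on $\rho(P)$---each play a distinct role (controlling ball size, ensuring disjointness of the excluded balls, and keeping the detour arcs inside $B_\delta(0, R)$, respectively).
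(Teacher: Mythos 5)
Your argument follows the paper's proof essentially step for step: the same detour path (Euclidean segment with at most semicircular arcs around the excluded balls $B_\delta(p_i,\gamma_{i,\e})$), the same packing bound on the number of detours obtained by projecting the $\sigma$-separated centers onto the line through $x,y$, the same Cauchy--Schwarz combination with the $\ell^2$ bound $\sum\gamma_{i,\e}^2<96m^2/\e^2$ of Lemma~\ref{jan-add}, and the same final passage from $\delta$-length to $g$-length via Lemma~\ref{lemma1:oct}. The constants work out (the extra ``$+2$'' in your bound on $|I|$ contributes only an $O(R\e^2)$ term once you note that $I\neq\emptyset$ forces $\sigma\lesssim R$, or more simply that $2\cdot 96m^2/\e^2<192R^2\e^4$).

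Two blemishes. First, you invoke the hypothesis $\rho(P)\cap(R-32R\e,R+32R\e)=\emptyset$, which is \emph{not} among the hypotheses of Lemma~\ref{lemma2:oct} (it appears only later, in Proposition~\ref{main-prop} and part \eqref{accpart} of Corollary~\ref{cor-W}); as written your proof is therefore using an unavailable assumption. Fortunately the step it supports is unnecessary: the detour path need not stay in $W'$ or in $B_\delta(0,R)$ at all. It only has to lie in $M'$ (which follows from \eqref{minithmM'} since the path avoids every $B_\delta(p_i,\gamma_{i,\e})$ and $\gamma_{i,\e}\ge\gamma_i$) and in the region where $\delta\le g\le(1+\e)^2\delta$, which by Lemma~\ref{lemma1:oct} is all of $\mathbb{R}^3\setminus\bigcup_i B_\delta(p_i,\tfrac{8}{\e}(\alpha_i+\beta_i))$ with no restriction to the ball of radius $R$ --- the distance $d_{(M',g)}$ is taken in $M'$, not in $M_1$. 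Dropping that requirement removes the illegitimate hypothesis. Second, you do not address the first conclusion of the lemma, namely $W'\subseteq M'$; this follows because \eqref{three-bounds} together with $\e<\e_0=\sqrt{2/(\pi C_1^2)}$ gives $m<\sigma/(20C_1)$, so Theorem~\ref{thm-M'} and the definition \eqref{def-gamma} of $\gamma_{i,\e}$ apply.
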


Note that this $\lambda$ will be useful for estimating the parameter (\ref{lambda}) for any set $W\subseteq W'$ as well.

\begin{proof}
It follows from (\ref{three-bounds}) that 
$\sigma> 20 m C_1$. Consequently, \eqref{minithmM'} applies and we have $W' \subseteq M'$.  Now let $x,y\in W'$. These two points 
can be joined by a path $\varphi$ in $M'$ consisting of portions of the Euclidean line segment $xy$ and interrupted by several at most semi-circular arcs along the spheres of Euclidean radius $\gamma_{i,\e}$ (for varying $i$)
by Proposition~\ref{prop1} and definitions \eqref{defn-gammaj} and \eqref{def-gamma}. The centers of these spheres project onto points on the line segment $xy$ which are at least 
\be
\sqrt{\sigma^2-\,\max_i\{2\gamma_{i,\e}\}^2}
\ge \sqrt{\sigma^2-\left(\frac{16m}{\e}\right)^2}
> \sqrt{\sigma^2-\left(\frac{\sigma}{2}\right)^2}>\frac{\sigma}{2}
\ee
away from each other; consult the diagram below for details. 

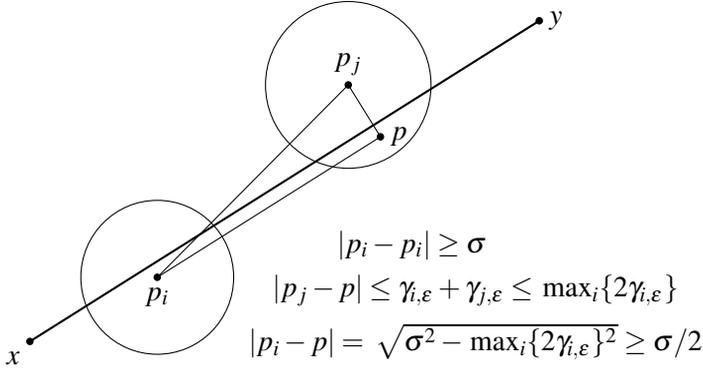
\begin{figure}[h]
\begin{tikzpicture}[scale=.85]
\draw[thick] (0,0) -- (8, 5);
\draw[fill=black] (0,0) circle (0.05);
\draw[fill=black] (8,5) circle (0.05);
\draw (2,1) circle (1.2);
\draw (5,4) circle (1.3);
\node[below left] at (0, 0) {$x$};
\node[right] at (8, 5) {$y$};
\draw[fill=black] (2,1) circle (0.05);
\draw[fill=black] (5,4) circle (0.05);
\node[below] at (2,1) {$p_i$};
\node[above] at (5,4) {$p_j$};
\draw (2,1) -- (5,4);
\draw (2,1) -- (490/89, 284/89);
\draw (5,4) -- (490/89, 284/89);
\draw[fill=black] (490/89, 284/89) circle (0.05);
\node[right] at (490/89, 284/89) {$p$};
\node at (6,1.5) {$|p_i-p_i|\ge \sigma$};
\node at (7,.8) {$|p_j-p|\le \gamma_{i,\e}+\gamma_{j,\e}\le \,\max_i\{2\gamma_{i,\e}\}$};
\node at (7,0) {$|p_i-p|=\sqrt{\sigma^2-\,\max_i\{2\gamma_{i,\e}\}^2} \ge \sigma/2$};
\end{tikzpicture}
\caption{Illustration for the proof of Lemma \ref{lemma2:oct}.}
\label{fig-lemma-4-7}
\end{figure}

It follows from $|x-y|<2R$ that there can be no more than $\frac{4R}{\sigma}$ arcs on the path $\varphi$. In particular, the length of $\varphi$ measured with respect to Euclidean metric satisfies
\be \label{sigmagamma:est}
d_{(M',\delta)}(x,y) \le L_\delta(\varphi) \le |x-y|+\pi \sum \gamma_{i,\e}
\ee
where the summation in the last line goes over at most $\frac{4R}{\sigma}$ elements. By Cauchy-Schwarz inequality we see that the latter sum satisfies 
\be
\left(\sum \gamma_{i,\e}\right)^2\le \frac{4R}{\sigma} \sum \gamma_{i,\e}^2.
\ee
It follows from Lemma \ref{jan-add} and the assumption $m<\e\cdot \tfrac{\sigma}{32}$ that 
\be 
\sum \gamma_{i,\e}^2< 96 m^2/\e^2<3\sigma m/\e.
\ee
Overall, we see that the summation term in \eqref{sigmagamma:est} can be bounded by 
\be
\left(\sum \gamma_{i,\e}\right)^2
\le \frac{4R}{\sigma}\cdot\frac{3\sigma m}{\e} \le \frac{12mR}{\e}.
\ee
Combining this with (\ref{sigmagamma:est}) we have
\be \label{|x-y|}
d_{(M',\delta)}(x,y) \le |x-y|+\pi \sum \gamma_{i,\e}
< d_{(\mathbb{R}^3,\delta)}(x,y)+\pi \sqrt{\frac{12mR}{\e}}.
\ee

By Lemma \ref{lemma1:oct} we have
\be 
d_{(\mathbb{R}^3,\delta)}(x,y)\le d_{(M',\delta)}(x,y)
\le d_{(M',g)}(x,y)\le (1+\e)d_{(M',\delta)}(x,y).
\ee
Since $m<R\e^3$ the estimate  \eqref{|x-y|} implies 
\be
d_{(\mathbb{R}^3,\delta)}(x,y)
\le d_{(M',g)}(x,y)\le(1+\e)
\left(d_{(\mathbb{R}^3,\delta)}(x,y)+\pi R \e \sqrt{12}\,\right).
\ee
It follows from $d_{(\mathbb{R}^3,\delta)}(x,y)=|x-y|<2R$ that 
\be 
0\le d_{(M',g)}(x,y)-d_{(\mathbb{R}^3,\delta)}(x,y)\le 
R\e(2 + (1+\e) \pi \sqrt{12}).
\ee
The claim \eqref{lambda-R-e} is now immediate from $2 + (1+\e) \pi \sqrt{12}< 2+4\pi \sqrt{3}<24$.
\end{proof}

\begin{rmrk}
Note that in Lemma~\ref{lemma2:oct} our estimates
do not depend upon the number of points $p_i\in P$
nor on the number pf minimal surfaces $\Sigma_i$.
\end{rmrk}

\subsection{Introducing $W$}

In order to apply Theorem \ref{thm-subdiffeo} we need a pair of diffeomorphic subregions $W_1$ and $W_2$.  We are able to simplify the situation slightly by choosing a single $W\subseteq  {\mathbb{R}}^3$
which can be viewed as both a subset of $M_1$ and $M_2$ of (\ref{defn-M_1})-(\ref{defn-M_2}).   Our $W'$ defined in \eqref{defn-W'} may not be a subset of both these manifolds.

Define
\be \label{U}
W=B_\delta(0,R-\lambda)\setminus\left(\bigcup_iB_\delta (p_i, \gamma_{i,\e})\right),
\ee
where $\lambda$ is the parameter estimated in Lemma \ref{lemma2:oct}
and $\gamma_{i,\epsilon}$ is defined in (\ref{def-gamma}).
Clearly, $W= W'\,\cap \, B_\delta(0, R-\lambda)\subseteq W'$ of Lemma \ref{lemma2:oct}. See Figure~\ref{fig-W-W'} for two different scenarios
as to how $W\subseteq M_1\subseteq M'$ depending on 
which whether the $i^{th}$ minimal surface
$\Sigma_i$ is located outside or inside
of $B_{\delta}(p_i, \delta_{i,R})$.   

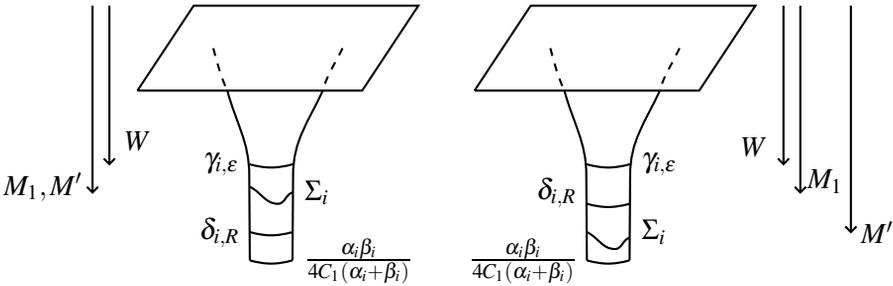
\begin{figure}[h] 
\centering
\begin{tikzpicture}[scale=.75]

\node[right] at (0.8, -8) {$\frac{\alpha_i\beta_i}{4C_1(\alpha_i+\beta_i)}$};
\node[left] at (0, -7.5) {$\delta_{i,R}$};
\node[right] at (0.8, -6.8) {$\Sigma_i$};
\node[left] at (0, -6.3) {$\gamma_{i,\e}$};

\draw[thick] (-2.5, -3.5) to (-2.5, -6.3);
\draw[thick] (-2.6, -6.2) to (-2.5, -6.3) to (-2.4, -6.2);
\node[right] at (-2.4, -5.9) {$W$};
\draw[thick] (-2.8, -3.5) to (-2.8, -6.8);
\draw[thick] (-2.9, -6.7) to (-2.8, -6.8) to (-2.7, -6.7);
\node[left] at (-2.8, -6.7) {$M_1, M'$};

\draw[thick] (-2,-5) to (2, -5) to (3,-3.5) to (-1, -3.5) to (-2, -5);
\draw[thick, dashed] (-.65, -4.25) to [out=-75, in=115] (-0.4, -5);
\draw[thick] (-.4, -5) to [out=-75, in=90] (-0.01, -6.5) to (0,-8);
\draw[thick, dashed] (1.65, -4.25) to [out=-115, in=75] (1.3, -5);
\draw[thick] (1.3, -5) to [out=-115, in=90] (0.77, -6.5) to (0.76,-8);
\draw[thick] (0,-8) to [out =-15, in=-165] (0.76, -8);
\draw[thick] (0,-7.5) to [out =-15, in=-165] (0.76, -7.5);
\draw[thick] (0, -6.7) to [out=-25, in =-180] (0.5, -7) to [out =0, in =-165] (0.76, -6.8);
\draw[thick] (-0.01, -6.3) to [out=-15, in =-165] (0.77, -6.3);

\node[left] at (6, -8) {$\frac{\alpha_i\beta_i}{4C_1(\alpha_i+\beta_i)}$};
\node[right] at (6.8, -7.5) {$\Sigma_i$};
\node[left] at (6, -6.8) {$\delta_{i,R}$};
\node[right] at (6.8, -6.3) {$\gamma_{i,\e}$};

\draw[thick] (9.5, -3.5) to (9.5, -6.3);
\draw[thick] (9.4, -6.2) to (9.5, -6.3) to (9.6, -6.2);
\node[left] at (9.4, -6) {$W$};
\draw[thick] (9.8, -3.5) to (9.8, -6.8);
\draw[thick] (9.7, -6.7) to (9.8, -6.8) to (9.9, -6.7);
\node[right] at (9.75, -6.6) {$M_1$};
\draw[thick] (10.7, -3.5) to (10.7, -7.5);
\draw[thick] (10.6, -7.4) to (10.7, -7.5) to (10.8, -7.4);
\node[right] at (10.7, -7.5) {$M'$};

\draw[thick] (4,-5) to (8, -5) to (9,-3.5) to (5, -3.5) to (4, -5);
\draw[thick, dashed] (5.35, -4.25) to [out=-75, in=115] (5.6, -5);
\draw[thick] (5.6, -5) to [out=-75, in=90] (5.99, -6.5) to (6,-8);
\draw[thick, dashed] (7.65, -4.25) to [out=-115, in=75] (7.3, -5);
\draw[thick] (7.3, -5) to [out=-115, in=90] (6.77, -6.5) to (6.76,-8);
\draw[thick] (6,-8) to [out =-15, in=-165] (6.76, -8);
\draw[thick] (6,-7) to [out =-15, in=-165] (6.76, -7);
\draw[thick] (6, -7.5) to [out=-25, in =-180] (6.5, -7.8) to [out =0, in =-165] (6.76, -7.6);
\draw[thick] (5.99, -6.3) to [out=-15, in =-165] (6.77, -6.3);

\end{tikzpicture}
\caption{Two scenarios.}
\label{fig-W-W'}
\end{figure}

\noindent
The following records the properties of $W$ needed in order to apply Theorem \ref{thm-subdiffeo}.

\begin{cor}\label{cor-W}
Assume that 
\be\label{three-bounds-again}
m<\e\cdot \tfrac{\sigma}{32} \textrm{ and } m<R\e^3  
\textrm{ where }0<\e<\e_0:=\sqrt{\tfrac{2}{\pi C_1^2}}.  
\ee
\begin{enumerate}[I]
\item \label{cor-U}
We have 
\be 
W \subseteq M_1\,\cap \, M_2,
\ee
where $M_1$ is as in (\ref{defn-M_1})
and $M_2$ is as in (\ref{defn-M_2}).
\item \label{W'-e} 
The following estimate holds over $W$:
\be
\delta \le g\le (1+\e)^2 \delta.
\ee
\item \label{W'-lambda} 
The parameter
\be
\lambda:=\sup_{x,y\in W} |d_{(M',g)}(x,y)-d_{(\mathbb{R}^3,\delta)}(x,y)|
\ee
satisfies 
\be
\lambda < \lambda_{R, \e}=24 R\e.
\ee
\item \label{M_1-W}
We have 
\be
M_1\setminus W\subseteq \bigg(B_\delta (0,R)\setminus B_\delta (0, R-\lambda)\bigg)\cup \bigcup_i \left(B_\delta (p_i, \gamma_{i,\e})\setminus B_\delta (p_i, \delta_{i,R})\right),
\ee
where clearly
$B_\delta (p_i, \gamma_{i,\e})\setminus B_\delta (p_i, \delta_{i,R})
=\emptyset$ when $\gamma_{i,\e}\le \delta_{i,R}$ of (\ref{defn-delta}).
\item \label{M_2-W}
We have  
\be
M_2\setminus W= \bigg(B_\delta (0,R)\setminus B_\delta (0, R-\lambda)\bigg)\cup \bigcup_i B_\delta (p_i, \gamma_{i,\e}). 
\ee
\item \label{accpart} Suppose that 
\be\label{accthing}
\rho(P)\,\cap \, (R-32R\e, R+32R\e)=\emptyset
\ee
where $\rho(x)=|x|$ and $P=\{p_1,...,p_N\}$. Then the unions in parts \eqref{M_1-W} and \eqref{M_2-W} of this corollary are disjoint. 
\end{enumerate}
\end{cor}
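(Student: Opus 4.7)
\emph{Proof proposal.} The corollary is a bookkeeping consequence of the three preceding lemmas together with elementary set manipulation, so my strategy is to organize the six claims by which previous result does the work. Throughout I abbreviate $A=B_\delta(0,R)$, $C=B_\delta(0,R-\lambda)$, $D_i=B_\delta(p_i,\gamma_{i,\e})$, $B_i=B_\delta(p_i,\delta_{i,R})$, so that $W'=A\setminus\bigcup D_i$ and $W=C\setminus \bigcup D_i$. First I would verify the standing hypothesis $\sigma>20mC_1$, which is implied by $m<\e\sigma/32$ and $\e<\e_0<1$; this makes Theorem~\ref{thm-M'} (and Lemma~\ref{locatingM1}) available for $M_1$, and permits us to invoke Lemma~\ref{lemma1:oct} and Lemma~\ref{lemma2:oct}.

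For parts (2) and (3) I would observe that $\gamma_{i,\e}\ge \frac{8}{\e}(\alpha_i+\beta_i)$ by definition \eqref{def-gamma}, so Lemma~\ref{lemma1:oct} yields $\delta\le g\le (1+\e)^2\delta$ pointwise on $W\subseteq W'$, giving (2); and since $W\subseteq W'$, the supremum defining $\lambda$ over $W$ is dominated by the one over $W'$, so $\lambda<\lambda_{R,\e}=24R\e$ by Lemma~\ref{lemma2:oct}, giving (3). For part (1) I would note first that $W\subseteq C\subseteq A=M_2$ is immediate, and that the origin lies in $W'$ since $|p_i|\ge \sigma>32m/\e\ge 4\gamma_{i,\e}$ by Lemma~\ref{jan-add}(1); then for any $x\in W$ the bound from (3) applied to $0,x\in W'$ gives $d_{(M',g)}(0,x)\le |x|+\lambda<(R-\lambda)+\lambda=R$, so $x\in M_1$.

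For (4) and (5) I would compute the set differences directly. Using Lemma~\ref{locatingM1} the inclusion $M_1\subseteq A\setminus \bigcup B_i$ gives
$$M_1\setminus W=M_1\cap(C^c\cup\bigcup D_i)\subseteq (A\setminus C)\cup\bigcup_i(D_i\setminus B_i),$$
which is (4); for (5) one directly writes $M_2\setminus W=A\setminus(C\setminus \bigcup D_i)=(A\setminus C)\cup\bigcup_i(A\cap D_i)$, which gives the stated formula (interpreting the $D_i$ on the right as $D_i\cap A$, with irrelevant $D_i$ making no contribution once (6) is imposed).

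The only genuinely nontrivial step is part (6), the disjointness of the pieces in the unions. I would split on the position of each $p_i$: under the non-accumulation assumption \eqref{accthing}, every $|p_i|$ lies outside the interval $(R-32R\e,R+32R\e)$. If $|p_i|>R+32R\e$, then since $\gamma_{i,\e}\le 8m/\e<8R\e^2<32R\e$ the ball $D_i$ is disjoint from $A$, hence contributes nothing to $M_2\setminus W$ and nothing that meets the shell $A\setminus C$. If $|p_i|<R-32R\e$, then $D_i\subseteq B_\delta(0,R-32R\e+8R\e^2)\subseteq B_\delta(0,R-24R\e)\subseteq C$, which is disjoint from the shell $A\setminus C$. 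Mutual disjointness of the $D_i$'s themselves follows from $|p_i-p_j|\ge\sigma>32m/\e\ge 2(\gamma_{i,\e}+\gamma_{j,\e})$, again via Lemma~\ref{jan-add}(1) together with the hypothesis $m<\e\sigma/32$. The main obstacle throughout is the arithmetic of reconciling the several length scales $\delta_{i,R}$, $\gamma_{i,\e}$, $\lambda$, $\sigma$, $R$, $m$; assuming $\e_0$ is taken small enough (in particular $\e_0<1/24$) every comparison I need collapses to a clean polynomial inequality in $\e$.
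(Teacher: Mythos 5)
Your proposal is correct and follows essentially the same route as the paper's proof: parts (1)--(3) come from Lemmas~\ref{locatingM1}, \ref{lemma1:oct} and \ref{lemma2:oct} via $W\subseteq W'$ and the estimate $d_{(M',g)}(0,x)<|x|+\lambda<R$, parts (4)--(5) are the same set computation using $M_1\subseteq B_\delta(0,R)\setminus\bigcup_i B_\delta(p_i,\delta_{i,R})$, and part (6) is the same dichotomy $|p_i|<R-\lambda-\gamma_{i,\e}$ or $|p_i|>R+\gamma_{i,\e}$ forced by \eqref{accthing}. Your extra checks (that $0\in W'$ so Lemma~\ref{lemma2:oct} applies with $y=0$, that the balls $B_\delta(p_i,\gamma_{i,\e})$ are pairwise disjoint, and that the equality in part (5) should be read with $B_\delta(p_i,\gamma_{i,\e})\cap B_\delta(0,R)$) are legitimate small refinements of details the paper leaves implicit.
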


\begin{proof}
Let $x\in W$. Since by definition $W\subseteq W'$ and since $W'\subseteq M'$ by Lemma \ref{lemma2:oct} we have that $W\subseteq M'$. Furthermore, we see from 
\be 
d_{(M',g)}(0,x)<|x|+\lambda<R
\ee 
that $x\in B_g(0, R)\,\cap \, M'= M_1$. Observing that $x\in  B_\delta(0, R-\lambda)\subset M_2$ completes the proof of part\ref{cor-U}. Parts \ref{W'-e} and \ref{W'-lambda} are now immediate from Lemmas \ref{lemma2:oct} and \ref{lemma1:oct}, respectively. Note that hypotheses \eqref{three-bounds-again} imply $m<\sigma(20C_1)$ so that Lemma \ref{locatingM1} applies. Consequently,  we have parts \ref{M_1-W} and \ref{M_2-W} of our corollary. It remains to assume \eqref{accthing} and argue that the unions in parts \ref{M_1-W} and \ref{M_2-W} are disjoint: 
\be\label{acc} 
B_\delta (0,R)\setminus B_\delta (0, R-\lambda)\subset 
\mathbb{R}^3 \setminus \bigcup_i B_\delta(p_i, \gamma_{i,\e}).
\ee
Assumption \eqref{accthing}, and Lemma \ref{jan-add}, imply 
\be \label{acc-2}
||p_i|-R|\ge 32R\e > \lambda_{R,\e}+8R\e^2 > \lambda + \tfrac{8m}{\e} \ge \lambda+\gamma_{i,\e} >\gamma_{i,\e}.
\ee
for all $1\le i\le n$. In particular, we have 
\be
|p_i|< R-\lambda-\gamma_{i,\e} \textrm{ or } |p_i|> R+\gamma_{i,\e}
\ee
for all $1\le i\le n$ and the inclusion in \eqref{acc} is now immediate. 
\end{proof}

\subsection{Estimating Volumes}

To complete the estimation of the intrinsic flat distance between $(M_1,g)$ and $(M_2,\delta)$ we need estimates on 
volumes and areas in the hypothesis of Theorem~\ref{thm-subdiffeo}.
This is done in Lemma~\ref{lemma3a} and Lemma~\ref{lemma3b}.

\begin{lemma}\label{lemma3a}
Let $\e_0:=\sqrt{\tfrac{2}{\pi C_1^2}}\in (0,1)$. There exists a universal constant $C'>0$ such that for all $R>0$, all $\e \in (0,\e_0)$, and all geometrostatic manifolds $(M,g)$ with  
\be
m_{ADM}(M') = m < R \e^3,\ \ \ m<\e\cdot \tfrac{\sigma}{32}
\ee 
the region $W$ of \eqref{U} and Corollary \ref{cor-W} satisfies
\begin{align}
&\mathrm{Vol}_\delta(W)\le \mathrm{Vol}_g(W)\le C'R^3\\
&\mathrm{Vol}_\delta(\partial W)\le \mathrm{Vol}_g(\partial W)\le C' R^2. 
\end{align}
\end{lemma}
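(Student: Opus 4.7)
\textbf{Proof plan for Lemma \ref{lemma3a}.} The plan is to exploit the fact that $W$ sits inside a Euclidean ball and that the metric $g$ is pointwise close to $\delta$ on $W$, then combine this with the size estimates on the $\gamma_{i,\e}$ already provided by Lemma \ref{jan-add}.

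First, from part \eqref{W'-e} of Corollary \ref{cor-W} we have $\delta \le g \le (1+\e)^2 \delta$ on $W$. Since $g$ is conformal to $\delta$ with conformal factor $\Psi = \chi\psi$ bounded by $1 \le \Psi \le 1+\e$ on $W$, the volume element satisfies $dV_g = \Psi^6 \, dV_\delta \le (1+\e)^6 \, dV_\delta$, and the area element on any hypersurface in $W$ (or on $\partial W$, which lies in the closure of this region) satisfies $d\sigma_g = \Psi^4 \, d\sigma_\delta \le (1+\e)^4 \, d\sigma_\delta$. For $\e \in (0,\e_0)\subset (0,1)$ both $(1+\e)^6$ and $(1+\e)^4$ are bounded by $64$.

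For the volume estimate, the inclusion $W \subseteq B_\delta(0, R-\lambda) \subseteq B_\delta(0, R)$ yields $\mathrm{Vol}_\delta(W) \le \frac{4\pi}{3}R^3$, and hence $\mathrm{Vol}_g(W) \le 64 \cdot \frac{4\pi}{3}R^3$.

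For the area estimate, observe that $\partial W \subseteq \partial B_\delta(0, R-\lambda) \cup \bigcup_i \partial B_\delta(p_i, \gamma_{i,\e})$, so
\begin{equation*}
\mathrm{Vol}_\delta(\partial W) \le 4\pi(R-\lambda)^2 + 4\pi \sum_i \gamma_{i,\e}^2 \le 4\pi R^2 + 4\pi \sum_i \gamma_{i,\e}^2.
\end{equation*}
The main work is to bound the last sum. By Lemma \ref{jan-add}(2) we have $\sum_i \gamma_{i,\e}^2 < 96\, m^2/\e^2$, and the hypothesis $m < R\e^3$ gives $m^2/\e^2 < R^2 \e^4 < R^2$. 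Thus $\sum_i \gamma_{i,\e}^2 < 96 R^2$, and combining the above yields $\mathrm{Vol}_\delta(\partial W) \le 4\pi(1+96)R^2$. Applying the conformal bound then gives $\mathrm{Vol}_g(\partial W) \le 64 \cdot 4\pi \cdot 97 \cdot R^2$. Choosing $C'$ as the larger of the two universal constants obtained completes the proof. The only nontrivial step is the estimate on $\sum \gamma_{i,\e}^2$, and this reduces entirely to Lemma \ref{jan-add} combined with the smallness hypothesis $m < R\e^3$, so no genuine obstacle arises.
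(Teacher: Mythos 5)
Your proof is correct and follows essentially the same route as the paper: convert $g$-volumes and areas to $\delta$-volumes via the metric comparison on $W$, bound $\mathrm{Vol}_\delta(W)$ by the volume of $B_\delta(0,R)$, and bound $\mathrm{Vol}_\delta(\partial W)$ by the sphere areas together with Lemma~\ref{jan-add}(2) and the hypothesis $m<R\e^3$. One small slip: since the metric here is $g=\Psi^2\delta$ (not $\Psi^4\delta$), the correct relations are $dV_g=\Psi^3\,dV_\delta$ and $d\sigma_g=\Psi^2\,d\sigma_\delta$, so the sharp factors are $(1+\e)^3$ and $(1+\e)^2$; your $(1+\e)^6$ and $(1+\e)^4$ are still valid (just non-sharp) upper bounds, so nothing breaks.
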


\begin{proof}
Fix $\e$ with $0<\e<\e_0$. 
The first estimate follows from the fact that 
\begin{eqnarray}  
\mathrm{Vol}_g(W)
&\le& (1+\e)^3 \mathrm{Vol}_\delta\left( W \right)
\textrm{ by Lemma~\ref{lemma1:oct}}\\
&\le& (1+\e)^3 \mathrm{Vol}_\delta \left( B_\delta(0, R-\lambda) \right)
\textrm{ by $W\subset B_\delta(0,R-\lambda)$,}\\
&\le& \frac{4\pi}{3}(1+\e_0)^3 (R-\lambda)^3\le C'R^3.
\end{eqnarray}
for some universal constant $C'>0$.
To prove the second estimate we have:
\be \begin{aligned}
\mathrm{Vol}_g(\partial W)
\le&(1+\e)^2\mathrm{Vol}_\delta(\partial W) \textrm{ by Lemma~\ref{lemma1:oct},}\\
\le&4\pi (1+\e)^2 \left(R^2+\sum \gamma_{i,\e}^2 \right)
\textrm{ by (\ref{U}),}\\
\le&4\pi (1+\e)^2 \left(R^2+96m^2/\e^2\right)
\textrm{ by Lemma~\ref{jan-add}},
\\
\le& 4\pi (1+\e_0)^2\left(R^2+96R^2\e_0^4\right)\\
\le &\ C'R^2 
\end{aligned}
\ee 
for some universal constant $C'>0$. 
\end{proof}

\begin{lemma}\label{lemma3b}
Let $\e_0:=\sqrt{\tfrac{2}{\pi C_1^2}}\in (0,1)$. There exists a universal constant $C''>0$ such that for all $R>0$, all $\e \in (0,\e_0)$, and all geometrostatic manifolds $(M,g)$ with  
\be\label{lemma3bm}
m_{ADM}(M') = m < R \e^3,\ \ \ m<\e\cdot \tfrac{\sigma}{32},\ \ \ \rho(P)\,\cap \, (R-32R\e, R+32R\e)=\emptyset,
\ee 
the regions $M_1$ of(\ref{defn-M_1}), $M_2$ of (\ref{defn-M_2}), and $W$ of \eqref{U} and Corollary \ref{cor-W} satisfy
\begin{eqnarray}
\label{V-1-R-e}&\mathrm{Vol}_g(M_1\setminus W)\le C''R^3 \e,\\ 
\label{V-2-R-e}&\mathrm{Vol}_\delta(M_2\setminus W)\le C''R^3 \e.
\end{eqnarray}
\end{lemma}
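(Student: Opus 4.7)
The plan is to invoke Corollary~\ref{cor-W}, parts~\eqref{M_1-W}--\eqref{accpart}, which (under our assumption on $\rho(P)$) decomposes $M_1\setminus W$ and $M_2\setminus W$ into an outer shell $S:=B_\delta(0,R)\setminus B_\delta(0,R-\lambda)$ plus pieces near each pole $p_i$, and to bound the volume of each piece. For \eqref{V-2-R-e}, $M_2\setminus W = S \sqcup \bigsqcup_i B_\delta(p_i,\gamma_{i,\e})$: the shell contributes $\mathrm{Vol}_\delta(S) \le 4\pi R^2\lambda \le CR^3\e$ via Lemma~\ref{lemma2:oct}, and the balls contribute $\tfrac{4\pi}{3}\sum_i\gamma_{i,\e}^3 \le \tfrac{4\pi\cdot 768}{3}\tfrac{m^3}{\e^3} \le CR^3\e^6$ via Lemma~\ref{jan-add}(3) and $m<R\e^3$. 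For \eqref{V-1-R-e}, $M_1\setminus W \subseteq S \cup \bigcup_i A_i$ with $A_i := B_\delta(p_i,\gamma_{i,\e})\setminus B_\delta(p_i,\delta_{i,R})$; since $S$ lies outside every $B_\delta(p_i, 8(\alpha_i+\beta_i)/\e)$ by \eqref{accthing} and $\gamma_{i,\e} \ge 8(\alpha_i+\beta_i)/\e$, Lemma~\ref{lemma1:oct} gives $g \le (1+\e)^2 \delta$ on $S$, so $\mathrm{Vol}_g(S) \le CR^3\e$.

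The main obstacle is controlling $\mathrm{Vol}_g(A_i)$, where $\chi\psi$ has a singularity of order $\rho_i^{-2}$ at $p_i$. On $A_i$ we have $\gamma_{i,\e} < \sigma/2$ (Lemma~\ref{jan-add}(1) with $m < \e\sigma/32$), so every other pole satisfies $|x-p_j| \ge \sigma/2$ and $\sum_{j\ne i}\alpha_j/|x-p_j| \le 2m/\sigma < 1/16$, with an analogous bound for $\beta$. Hence $\chi \le 2 + \alpha_i/\rho$ and $\psi \le 2 + \beta_i/\rho$ on $A_i$ with $\rho := |x - p_i|$, and integration in spherical coordinates gives
\begin{equation*}
\mathrm{Vol}_g(A_i) \le 4\pi \int_{\delta_{i,R}}^{\gamma_{i,\e}} \Bigl(2+\tfrac{\alpha_i}{\rho}\Bigr)^3 \Bigl(2+\tfrac{\beta_i}{\rho}\Bigr)^3 \rho^2\,d\rho,
\end{equation*}
whose integrand expands as a sum of terms $c_{k,l}\alpha_i^k\beta_i^l\rho^{-(k+l)}$ with $0\le k,l\le 3$.

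Integrated and summed over $i$, the resulting terms split into three regimes. For $k+l\le 2$ the contribution is polynomial in $\gamma_{i,\e}$ with coefficient at most $(\alpha_i+\beta_i)^{k+l}$, summing via Lemma~\ref{jan-add} and Cauchy--Schwarz to $\le CR^3\e^6$. For $k+l=3$ we obtain the delicate logarithmic term $(\alpha_i+\beta_i)^3 \log(\gamma_{i,\e}/\delta_{i,R})$; the first branch of \eqref{defn-delta} yields $\log(\gamma_{i,\e}/\delta_{i,R}) \le \log(\gamma_{i,\e}/(\alpha_i+\beta_i)) + R/(\alpha_i+\beta_i)$, and summing is handled via $\sum(\alpha_i+\beta_i)^k \le m^k$ (for $k\ge 1$) together with the elementary bound $\sum t_i^3\log(1/t_i) \le 1/e$ for $t_i\ge 0$ with $\sum t_i\le 1$ (applied with $t_i = (\alpha_i+\beta_i)/m$). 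For $k+l\ge 4$ the integrated terms evaluate to $C\alpha_i^k\beta_i^l\delta_{i,R}^{3-(k+l)}$; the second branch $\delta_{i,R} \ge \alpha_i\beta_i/(4C_1(\alpha_i+\beta_i))$ combined with $\alpha_i^{k-1}\beta_i^{l-1} \le (\alpha_i+\beta_i)^{k+l-2}$ reduces each to $\le C(\alpha_i+\beta_i)^3$, summing to $\le Cm^3 \le CR^3\e^9$. Combining all pieces with $\e < \e_0 < 1$, every contribution is at most $CR^3\e$, establishing \eqref{V-1-R-e} with a universal constant. The twofold definition of $\delta_{i,R}$ in \eqref{defn-delta} is essential: the exponential branch tames the logarithm while the algebraic branch absorbs the power singularities.
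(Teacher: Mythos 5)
Your proposal is correct and follows essentially the same route as the paper: the same decomposition of $M_1\setminus W$ and $M_2\setminus W$ via Corollary~\ref{cor-W}, the same pointwise bound $\chi\le 2+\alpha_i/\rho$, $\psi\le 2+\beta_i/\rho$ on the annuli leading to the same radial integral, and the same division of labor between the two branches of $\delta_{i,R}$ in \eqref{defn-delta} (exponential branch for the logarithmic $k+l=3$ term, algebraic branch for $k+l\ge 4$). The only cosmetic difference is the elementary inequality used on the log term — you invoke $\sum t_i^3\log(1/t_i)\le 1/e$ where the paper uses $-\ln x<1/(2x)$ to get $-(\alpha_i+\beta_i)\ln(\delta_{i,R}/\gamma_{i,\e})\le 4m/\e+R$ — and both yield the required $\le C''R^3\e$ bound.
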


\begin{proof} Fix $\e$ with $0<\e<\e_0$. 
We start by proving \eqref{V-2-R-e}, as it is easier to establish.
Lemma ~\ref{jan-add} and parts \eqref{W'-lambda} and \eqref{M_2-W} of Corollary \ref{cor-W} imply that 
\begin{align}
\,\,\qquad\mathrm{Vol}_\delta (M_2\setminus W) &=
\mathrm{Vol}_\delta(B_\delta (0,R)\setminus B_\delta (0, R-\lambda))
 + \sum_i \mathrm{Vol}_\delta(B_\delta(p_i,\gamma_{i,\e}))\\
&= \tfrac{4}{3} \pi R^3 - \tfrac{4}{3} \pi (R-\lambda)^3
+ \sum_i \tfrac{4}{3} \pi (\gamma_{i,\e})^3\\
&\le 
\tfrac{4\pi}{3}\left(3R^2\lambda +\lambda^3+768 m^3/\e^3\right)\\
& \le \tfrac{4\pi}{3}\left(3R^2(24R\e) +(24R\e)^3+ 768 R^3 \e^6\right)\\
& \le C'' R^3\e 
\end{align}
for some universal constant $C''$. 

The inequality \eqref{V-1-R-e} is far more difficult to prove.
By part \eqref{M_1-W} of Proposition \ref{cor-W} (see Figure~\ref{fig-W-W'})
we have
\be \label{lemma3b-V}
\mathrm{Vol}_g(M_1\setminus W)
\le  
\mathrm{Vol}_g(B_\delta (0,R)\setminus B_\delta (0, R-\lambda))
+ \sum_i V_i
\ee
where
\be
V_i:=\mathrm{Vol}_g\left(B_\delta (p_i, \gamma_{i,\e})\setminus B_\delta (p_i, \delta_{i,R})\right).
\ee

We see from part \eqref{accpart} of Corollary \ref{cor-W} that \eqref{minithmM'} and \eqref{minithmM''} apply, giving us 
\begin{eqnarray}
\quad\mathrm{Vol}_g
\left(B_\delta (0,R)\setminus B_\delta (0, R-\lambda)\right)
&\le&
(1+\e)^3\,\mathrm{Vol}_\delta
\left(B_\delta (0,R)\setminus B_\delta (0, R-\lambda)\right)\\
&\le & (1+\varepsilon)^3 
\left( \tfrac{4}{3} \pi R^3 -  \tfrac{4}{3} \pi (R-\lambda)^3 \right) \\
&\le & \tfrac{4}{3}\pi (1+\varepsilon_0)^3 \left(3R^2\lambda+\lambda^3\right).
\end{eqnarray}
Combining this with (\ref{lemma3b-V}) and part \eqref{W'-lambda} of Corollary \ref{cor-W} we have
\be \label{lemma3b-V'}
\mathrm{Vol}_g(M_1\setminus W)
\le  
\tfrac{4}{3}\pi (1+\varepsilon_0)^3\left(3R^2(24R\e) +(24R\e)^3\right)+ \sum_i V_i.
\ee

Next we estimate
each term, $V_i$, in the sum. 
Let $x\in B_\delta (p_i,\gamma_{i,\e})\setminus B_\delta (p_i,\delta_{i,R})$ and let $j\neq i$. The definition of $\sigma$ in \eqref{separation}, Lemma~\ref{jan-add}
and our hypothesis (\ref{lemma3bm}) imply
\be
|x-p_j| \ge |p_i-p_j| - |x-p_i| > \sigma - \gamma_{i,\e} \ge \sigma- (8m/\e) > 24m/\e.
\ee
Combining this with $\sum_j (\alpha_j +\beta_j) \le m$ in (\ref{ADM})
we have
\be 
1+\sum_{j\neq i}\frac{\alpha_j}{|x-p_j|}<1+\frac{\e}{24}<2
\quad\textrm{ and }\quad
1+\sum_{j\neq i}\frac{\beta_j}{|x-p_j|}<1+\frac{\e}{24}<2.
\ee
In particular, it follows that 
\be 
V_i
\le   \int_{\delta_{i,R}}^{\gamma_{i,\e}} 
\left(2+\tfrac{\alpha_i}{r}\right)^3\left(2+\tfrac{\beta_i}{r}\right)^3\pi r^2\,dr.  
\ee

After expanding the integrand in terms of powers of $r$ and integrating individual terms we obtain:
\begin{align}
V_i
\le& \,\, 
(64/3)  \pi \gamma_{i,\e}^3 + 48\pi (\alpha_i+\beta_i)\gamma_{i,\e}^2 \\
&+48\pi  \left(\alpha_i\beta_i +(\alpha_i+\beta_i)^2\right)\gamma_{i,\e}\\
&-8\pi(\alpha_i+\beta_i)\left((\alpha_i+\beta_i)^2+6\alpha_i\beta_i\right)\ln(\delta_{i,R}/\gamma_{i,\e})\\
&+12\pi \left(\alpha_i\beta_i+(\alpha_i+\beta_i)^2\right) (\alpha_i\beta_i)/\delta_{i,R}
\\
&+6\pi(\alpha_i+\beta_i)\left((\alpha_i\beta_i)/\delta_{i,R}\right)^2
+\pi \left(( \alpha_i\beta_i)/(\delta_{i,R})\right)^3
\end{align}

By the definition of $\delta_{i,R}$ in \eqref{defn-delta} 
and Lemma~\ref{jan-add}, we have 
\be
\frac{\delta_{i,R}}{\gamma_{i,\e}} \ge 
\frac{\e(\alpha_i+\beta_i) \exp(-R/(\alpha_i+\beta_i))}{8m}.
\ee
Thus 
\be
-\ln\left(\frac{\delta_{i,R}}{\gamma_{i,\e}}\right)
\le 
-\ln\left(\frac{\e(\alpha_i+\beta_i)}{8m}\right)
+ \frac{R}{(\alpha_i+\beta_i)}
\ee
Since $\sup_{x>0}\left(-x \ln(x)\right)=e^{-1}<1/2$ we have $-\ln(x)< 1/(2x)$ for positive $x$.  So
\be
-\ln\left(\frac{\e(\alpha_i+\beta_i)}{8m}\right) \le
\frac{4m}{\e(\alpha_i+\beta_i)}.
\ee
Combining this with the above and multiplying by $(\alpha_i+\beta_i)$ we get,
\be
- (\alpha_i+\beta_i)\ln\left(\frac{\delta_{i,R}}{\gamma_{i,\e}}\right)
\,\,\le\,\, \frac{4m}{\e}+ R.
\ee
Furthermore by (\ref{defn-delta})
\be 
\frac{\alpha_i\beta_i}{\delta_{i,R}}\le 4C_1(\alpha_i+\beta_i).
\ee
Together with estimates such as $4\alpha_i\beta_i\le (\alpha_i+\beta_i)^2$ we have
\begin{align}
\sum_i V_i
\le& \,\, 
(64/3)  \pi \sum_i\gamma_{i,\e}^3 + 48\pi \sum_i (\alpha_i+\beta_i)\gamma_{i,\e}^2 \\
&+48\pi  \sum_i [(\alpha_i+\beta_i)^2/4 +(\alpha_i+\beta_i)^2]\gamma_{i,\e}\\
&+8\pi\sum_i\left((\alpha_i+\beta_i)^2+6(\alpha_i+\beta_i)^2/4\right)
\left(\frac{4m}{\e}+ R\right)
\\
&+12\pi \sum_i((\alpha_i+\beta_i)^2/4+(\alpha_i+\beta_i)^2) 4C_1(\alpha_i+\beta_i)
\\
&+6\pi\sum_i(\alpha_i+\beta_i)\left(4C_1(\alpha_i+\beta_i)\right)^2
+\pi \sum_i \left(4C_1(\alpha_i+\beta_i)\right)^3
\end{align}

By Lemma \ref{jan-add} we have 
\begin{eqnarray}
\sum_i \gamma_{i,\e}^3 \qquad\quad\,\,&<& 768 m^3/\e^3 \\
\sum_i (\alpha_i+\beta_i)\gamma_{i,\e}^2 \,&<& 96m^3/\e^2 \\
\sum_i (\alpha_i+\beta_i)^2\gamma_{i,\e}&\le&\left(\sum_i  (\alpha_i+\beta_i)^4\right)^{1/2}\left(\sum_i \gamma_{i,\e}^2\right)^{1/2}\\
&<& \left({\sum_i} (\alpha_i+\beta_i)\right)^2\sqrt{96m^2/\e^2}
\,\,\,< \,\,\, 10m^3/\e, 
\end{eqnarray}
with the last set of estimates holding due to  
\be
\sum_i(\alpha_i+\beta_i)^4 < \left( \sum_i(\alpha_i+\beta_i)\right)^4\le m^4.
\ee
Likewise, $\sum_i(\alpha_i+\beta_i)^3<m^3$ and thus
\begin{align}
\sum_i V_i
\le& \,\, 
(64/3)  \pi\cdot 768 m^3/\e^3 + 48\pi\cdot  96m^3/\e^2 +60\pi\cdot 10m^3/\e \\
&+20\pi m^2
\left(4m/\e + R\right)
+60\pi C_1 m^3 
+96\pi C_1^2 m^3
+64 \pi C_1^3 m^3\\
\le&  \,\, 
(64/3)  \pi\cdot 768 R^3\e^6 + 48\pi\cdot 96R^3\e^7 +60\pi\cdot 10R^3\e^8 \\
&+20\pi R^3\e^6
\left(4\e^2 + 1\right)
+60\pi C_1 R^3\e^9 
+96\pi C_1^2 R^3\e^9
+64 \pi C_1^3 R^3\e^9\\
\le &\ C''R^3 \e^6, 
\end{align}
for some universal constant $C''$.
Combining this with (\ref{lemma3b-V'}) we finally have our first inequality.
\end{proof}

\subsection{The proof of Proposition \ref{main-prop}}
We now prove Proposition~\ref{main-prop} bounding the intrinsic flat distance between 
\be 
M_1=\{x\in M'\big{|}\, d_{(M',g)}(0,x)<R\}=B_g(0,R)\subseteq M'
\ee
endowed with the distance $d_{(M',g)}$
and
\be 
 M_2=\{x\in \mathbb{R}^3\big{|} \, |x|<R\}= B_\delta(0,R)
\ee 
endowed with the distance $d_{(\mathbb{R}^3, d_\delta)}(x,y)=|x-y|$.
Note that the following estimate on the diameters of these regions
\be
\,\max\{\diam_{g}(M_1), \diam_\delta(M_2)\}\le D=2R.
\ee

\begin{proof}[Proof of Proposition \ref{main-prop}]
We prove this proposition using the method of Lakzian and the first author to estimate the intrinsic flat distance (cf. Theorem~\ref{thm-subdiffeo}):
\be \label{need-this-LS}
\begin{aligned}
d_{\mathcal{F}}(M_1, M_2)\le &(2\bar h+a) \left(\text{Vol}_g(W)+\text{Vol}_\delta(W)+\text{Vol}_g(\partial W)+\text{Vol}_\delta(\partial W)\right)\\
&+\text{Vol}_g(M_1\setminus W)+\text{Vol}_\delta (M_2\setminus W)
\end{aligned}
\ee
taking $W$ as defined in (\ref{U}) and addressed in detail in Corollary~\ref{cor-W}. 

Let $\e_0:=\sqrt{\tfrac{2}{\pi C_1^2}}\in (0,1)$ and let $0<\e<\e_0$. 
Define $a$ as in (\ref{thm-subdiffeo-3}):
\be
a=\frac{\arccos[(1+\e)^{-1}]}{\pi}\cdot D=\frac{\arccos[(1+\e)^{-1}]}{\pi}\cdot 2R.
\ee
It follows from the L'H\^{o}pital's Rule that 
\be 
\e \mapsto \e^{-1/2}\cdot \arccos\left((1+\e)^{-1}\right)
\ee is a positive bounded function of $\e\in [0,\infty)$. Thus there is a universal constant $C_a$ such that 
\be\label{a-estimate}
0<a\le C_a R \sqrt{\e}. 
\ee
In addition, by part \eqref{W'-lambda} of Corollary \ref{cor-W} we have 
that $\bar{h}$ of \eqref{thm-subdiffeo-5} 
satisfies
\begin{eqnarray}
\bar{h}&=& \,\max \left\{\sqrt{2\lambda D}, D\sqrt{\e^2+2\e} \right\}\\
&\le & \,\max \left\{ \sqrt{2 (24 R \e) 2 R}, 2R \sqrt{3\e}\right\}  \le 10 R \sqrt{\e}.
\end{eqnarray}
Overall, we see that
\be
0<2\bar h+a \le C_2 R \sqrt{\e} 
\ee
for some universal constant $C_2$. Substituting our bound on $2\bar h+a$ into (\ref{need-this-LS}) along with the volume estimates from Lemmas~\ref{lemma3a} and~\ref{lemma3b} we have,
\be
d_{\mathcal{F}}(M_1, M_2) \le 
C_2 R \sqrt{\e}\cdot\left( 2C' R^3+ 2 C' R^2 \right)+ 2C''R^3\e,
\ee
which in turn implies \eqref{flatdistance-est}.
Similarly, using $D=2R$ we have
\be
d_{D\mathcal{F}}(M_1, M_2) \le 
\tfrac{1}{2R}\cdot C_2 R \sqrt{\e}\cdot \left( 2C' R^3+ 2\,(2R)\, C' R^2 \right)+ 2C''R^3\e,
\ee
which implies \eqref{D-flatdistance-est}.
\end{proof}

\subsection{The proof of Theorem \ref{Almost-Rigidity}}\label{proof-mainthm}

\begin{proof}
By assumption there is some 
$R_0\ge 0$ such that for almost every $R>R_0$, $R$ is not an accumulation point of $\rho(\cup_k P_k)$. Fix one such value of $R$. We show that for all $\bar{\epsilon}>0$ there is $K=K(\bar{\epsilon})\in \mathbb{N}$ such that 
\be\label{finalconv}
d_{\mathcal{F}}(B_{g_k}(0,R), B_\delta(0,R)) < \bar{\epsilon} \text{\ \ and\ \ }
d_{D\mathcal{F}}(B_{g_k}(0,R), B_\delta(0,R)) < \bar{\epsilon}
\ee
for all $k\ge K$.

For our fixed value of $R$ there exist $R'>0$ and $k_0\in \mathbb{N}$ such that 
\be \label{choose-N-1}
\rho(P_k) \,\cap \, (R-R', R+R') = \emptyset
\ee
for all $k\ge k_0$.
Take $0<\e<\e_0$ sufficiently small so that 
\begin{align}\label{choose-e-1}
&32R\e < R',\\
&C_{\mathcal{F}}' R^4 \sqrt{\e} + C_{\mathcal{F}}'' R^3 \sqrt{\e}<\bar{\epsilon},\\
&C_{D\mathcal{F}} R^3 \sqrt{\e}<\bar{\epsilon}
\end{align}
for constants $\e_0$, $C_{\mathcal{F}}'$, $C_{\mathcal{F}}''$ and $C_{D\mathcal{F}}$ of Proposition \ref{main-prop}. Finally, by assumptions \eqref{hyp-thm1.5} we know that there exists a $K\ge k_0$ such that
\be \label{choose-N-e}
m_{ADM}(M'_k) < R\e^3
\textrm{\ \ and\ \ }
\frac{m_{ADM}(M'_k)}{\sigma(M_k)} < \frac{\e}{32}
\ee
for all $k\ge K$. The hypotheses of Proposition \ref{main-prop} are now satisfied and as a result we obtain \eqref{finalconv}. This completes our proof.  
\end{proof}

        

\appendix

\section{Inverting a geometrostatic manifold}



Recall that the Riemannian Schwarzschild manifold of mass $m_1=m$
is a geometrostatic manifold with a single point $p_1=0$ and that this manifold has an isometry which interchanges the two ends:
\be
F: \mathbb{R}^3\setminus \{0\} \to \mathbb{R}^3\setminus \{0\}
\textrm{ defined by } F(y) = \left(\frac{m}{2}\right)^2\frac{y}{\,|y|^2}
\textrm{ with } F^{-1}(x)= \left(\frac{m}{2}\right)^2\frac{x}{\,|x|^2}.
\ee
That is the pullback metric of $g_{\mathrm{Sch}}$ as in (\ref{Sch}) is
\begin{eqnarray}
(F^*g_{\mathrm{Sch}})_y&=&\left(1+\frac{m}{2|F(y)|}\right)^4(F^*\delta)_y
=\left(1+\frac{2|y|}{m}\right)^4
\left(\frac{m}{2}\right)^4\frac{1}{\,|y|^4}\,\, \delta_y\\
&=&\left(\left(\frac{m}{2}\right)\frac{1}{|y|}+1\right)^4\delta_y
\,\,\,=\,\,\, (g_{\mathrm{Sch}})_ y.
\end{eqnarray}

Similarly, if we apply an inversion to any geometrostatic manifold
taking the end at infinity to an end at the origin and
taking one of the other ends to the end at infinity we obtain an
isometric geometrostatic manifold:

\begin{thm}\label{inversion}
Let $(X,g_X)$ be the geometrostatic manifold: 
\be
X=\mathbb{R}^n \setminus \{x_1,...,x_n\}
\textrm{ with }
g_X= \left(1+\sum_{i=1}^n \frac{\alpha_{X,i}}{|x-x_i|}\right)^2
\left(1+\sum_{i=1}^n \frac{\beta_{X,i}}{|x-x_i|}\right)^2 \delta_x
\ee
Let
\be
F(y)= \alpha_n\beta_n \frac{y}{\,|y|^2} + x_n
\textrm{ so that } F^{-1}(x)= \alpha_n\beta_n \frac{x-x_n}{|x-x_n|^2}.
\ee
Let $y_0=0$ and
\be
y_j=F^{-1}(x_j)= \alpha_n\beta_n \frac{x_j-x_n}{|x_j-x_n|^2}= \alpha_n\beta_n \frac{x_j-x_n}{r_{jn}^2} \quad
\textrm{ for } j =1,..,n-1.
\ee
Let $(Y, g_Y)$ be the geometrostatic manifold
\be
Y=\mathbb{R}^n\setminus\{y_0,...,y_{n-1}\} 
\textrm{ with }
g_Y=\left(1+\sum_{i=0}^{n-1} \frac{\alpha_{Y,i}}{|y-y_i|}\right)^2
\left(1+\sum_{i=0}^{n-1} \frac{\beta_{Y,i}}{|y-y_i|}\right)^2 \delta_y
\ee
where
\begin{eqnarray}
\alpha_{Y,0} &=& \alpha_n \\
\beta_{Y,0}&=& \beta_n\\
\alpha_{Y,j}&=& \frac{\beta_{X,j}\alpha_n}{r_{j,n}}\,\,=\,\, \frac{\beta_{X,j} |y_j|}{\beta_n}
 \\
\beta_{Y,j}& =&  \frac{\alpha_{X,j}\beta_n}{r_{j,n}}\,\,=\,\,\frac{\alpha_{X,j} |y_j|}{\alpha_n}
\end{eqnarray}
Then $F: Y \to X$ is an isometry which
maps the end at infinity for $Y$ to the end at $x_n$ for $X$
and the end at $0$ for $Y$ to
the end at infinity for $X$, and the end at $y_j$ for $Y$ to the 
end at $x_j$ for $X$.   
\end{thm}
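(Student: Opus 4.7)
The plan is to verify the isometry $F:(Y,g_Y)\to(X,g_X)$ by direct computation, exploiting the classical Möbius inversion identities. Writing $F(y)=\alpha_n\beta_n\,y/|y|^2+x_n$, which is conjugation of inversion about the origin (with radius-squared $\alpha_n\beta_n$) by translation by $x_n$, one has the two standard identities
\[
F^*\delta_x=\frac{\alpha_n^2\beta_n^2}{|y|^4}\,\delta_y,\qquad |F(y)-F(y')|=\frac{\alpha_n\beta_n\,|y-y'|}{|y|\,|y'|}.
\]
Applied with $y'=y_j=F^{-1}(x_j)$ for $j=1,\dots,n-1$, and using $|y_j|=\alpha_n\beta_n/r_{j,n}$, this yields
\[
|F(y)-x_j|=\frac{\alpha_n\beta_n\,|y-y_j|}{|y|\,|y_j|},\qquad |F(y)-x_n|=\frac{\alpha_n\beta_n}{|y|}.
\]

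Next I would substitute these into $\chi_X(F(y))$ and $\psi_X(F(y))$. The crucial step is to multiply $\chi_X(F(y))$ by $\alpha_n/|y|$: after expanding and regrouping one finds
\[
\chi_X(F(y))\cdot\frac{\alpha_n}{|y|}=\frac{\alpha_n}{|y|}+\frac{\alpha_n}{\beta_n}+\sum_{j=1}^{n-1}\frac{\alpha_{X,j}|y_j|}{\beta_n|y-y_j|}=\frac{\alpha_n}{\beta_n}\,\psi_Y(y),
\]
where the last equality uses $\beta_{Y,j}=\alpha_{X,j}|y_j|/\alpha_n=\alpha_{X,j}\beta_n/r_{j,n}$, $\beta_{Y,0}=\beta_n$, $\alpha_{Y,0}=\alpha_n$. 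The analogous computation with $\alpha\leftrightarrow\beta$ gives
\[
\psi_X(F(y))\cdot\frac{\beta_n}{|y|}=\frac{\beta_n}{\alpha_n}\,\chi_Y(y).
\]
Thus inversion \emph{swaps} the factors $\chi$ and $\psi$ (up to constants that cancel in the product), which is a manifestation of the fact that an end-swap reverses the electric charge.

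Multiplying these two identities and combining with the Möbius pullback formula for $\delta$ gives
\[
F^*g_X=\bigl(\chi_X(F(y))\,\psi_X(F(y))\bigr)^2\cdot\frac{\alpha_n^2\beta_n^2}{|y|^4}\,\delta_y=\bigl(\chi_Y(y)\,\psi_Y(y)\bigr)^2\delta_y=g_Y,
\]
so $F$ is an isometry. The end identifications are then immediate from the inversion: $|y|\to\infty$ forces $|F(y)-x_n|\to 0$, while $|y|\to 0$ forces $|F(y)-x_n|\to\infty$, and $y\to y_j$ forces $F(y)\to x_j$ by construction.

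The main obstacle is the bookkeeping in the second step: the $\alpha$-$\beta$ swap is easy to get wrong, and the spurious constants $\alpha_n/\beta_n$ and $\beta_n/\alpha_n$ only cancel when the two factors are multiplied. This means that $\chi$ and $\psi$ are not individually preserved by $F$, only their product, and one must verify that the definitions of $\alpha_{Y,j}$ and $\beta_{Y,j}$ in the statement indeed match the coefficients produced by the inversion after one absorbs the factor $\alpha_n\beta_n/|y|^2$ coming from $F^*\delta$.
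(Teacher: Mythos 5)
Your proposal is correct and follows essentially the same route as the paper: pull back $\delta$ under the inversion, use the classical distortion identity $|F(y)-F(y')|=\alpha_n\beta_n|y-y'|/(|y|\,|y'|)$ (which the paper derives by a short explicit computation rather than citing), and regroup the terms of $\chi_X\circ F$ and $\psi_X\circ F$ to recover $\psi_Y$ and $\chi_Y$. The only cosmetic difference is your choice of normalizing factors: the paper multiplies the $\alpha_X$-factor by $\beta_n/|y|$ and the $\beta_X$-factor by $\alpha_n/|y|$, which yields $\psi_Y$ and $\chi_Y$ exactly with no leftover constants, whereas your choice produces the ratios $\alpha_n/\beta_n$ and $\beta_n/\alpha_n$ that cancel only in the product.
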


The proof of this theorem follows a method of Misner in \cite{Misner-geo}.   We include it for completeness of exposition.

\begin{proof}
We need only prove $g_Y=F^*g_X$.

First observe that 
\be
\left(F^*\delta\right)_y=\left(\alpha_n \beta_n\right)^2\frac{1}{\,|y|^4}\,\, \delta_y.
\ee
So
\begin{eqnarray*}
\left(F^*g_X\right)_y &=& 
\left( 1 + \sum_{j=1}^n \frac{\alpha_{X,j}}{|F(y)-x_j|}\right)^2
\left( 1 + \sum_{j=1}^n \frac{\beta_{X,j}}{|F(y)-x_j|}\right)^2 \left(F^*\delta\right)_y\\
&=& 
\left( \left( \frac{\beta_n}{|y|}\right)
       \left(1 + \sum_{j=1}^n \frac{\alpha_{X,j}}{|F(y)-x_j|}\right)\right)^2
\left( \left( \frac{\alpha_n}{|y|}\right)
       \left(1 + \sum_{j=1}^n \frac{\beta_{X,j}}{|F(y)-x_j|}\right)\right)^2
 \,\,\,\delta_y.
\end{eqnarray*} 
We need only prove 
\begin{eqnarray}
 \left( \frac{\beta_n}{|y|}\right)
       \left(1 + \sum_{j=1}^n \frac{\alpha_{X,j}}{|F(y)-x_j|}\right)
&=&
\left(1 + \sum_{j=0}^{n-1} \frac{\beta_{Y,j}}{|y-y_j|}\right)\\
 \left( \frac{\alpha_n}{|y|}\right)
       \left(1 + \sum_{j=1}^n \frac{\beta_{X,j}}{|F(y)-x_j|}\right)
&=&
\left(1 + \sum_{j=0}^{n-1} \frac{\alpha_{Y,j}}{|y-y_j|}\right).
\end{eqnarray}
First observe that
\begin{eqnarray}
\frac{\beta_n}{|y|}&=& \frac{\beta_n}{|y-y_0|} = \frac{\beta_{Y,0}}{|y-y_0|} 
\textrm{ by the choice of $y_0=0$ and $\beta_{Y,0}=\beta_n$,}\\
\frac{\alpha_n}{|y|}&=& \frac{\alpha_n}{|y-y_0|} = \frac{\alpha_{Y,0}}{|y-y_0|} 
\textrm{ by the choice of $y_0=0$ and $\alpha_{Y,0}=\alpha_n$.}
\end{eqnarray}
Since $\alpha_{X,n}=\alpha_n$ and $\beta_{X,n}=\beta_n$ we have,
\begin{eqnarray}
\qquad
\frac{\beta_n}{|y|}
 \left(\frac{\alpha_{X,n}}{|F(y)-x_n|}\right)
 &=& \frac{\beta_n}{|y|}
 \left(\frac{\alpha_{n}}{|\alpha_n\beta_n y/|y|^2 + x_n-x_n|}\right)
=\frac{\beta_n \alpha_{n}}{|\alpha_n\beta_n y/|y||}= 1, \\
\quad \frac{\alpha_n}{|y|}
 \left(\frac{\beta_{X,n}}{|F(y)-x_n|}\right)
 &=& \frac{\alpha_n}{|y|}
 \left(\frac{\beta_{n}}{|\alpha_n\beta_n y/|y|^2 + x_n-x_n|}\right)
=\frac{\alpha_n \beta_{n}}{|\alpha_n\beta_n y/|y||}= 1. 
\end{eqnarray}
So we need only prove for $j=1,..., n-1$ we have
\begin{eqnarray}
 \left( \frac{\beta_n}{|y|}\right)
       \left( \frac{\alpha_{X,j}}{|F(y)-x_j|}\right)
&=&
\left(\frac{\beta_{Y,j}}{|y-y_j|}\right)\\
 \left( \frac{\alpha_n}{|y|}\right)
       \left( \frac{\beta_{X,j}}{|F(y)-x_j|}\right)
&=&
\left( \frac{\alpha_{Y,j}}{|y-y_j|}\right).
\end{eqnarray}
Now
\begin{eqnarray}
|F(y)-x_j|^2&=& |F(y)-F(y_j)|^2\\
&=&\left| \alpha_n\beta_n \frac{y}{\,|y|^2} +x_n -\alpha_n\beta_n \frac{y_j}{\,|y_j|^2}-x_n\right|^2\\
&=&(\alpha_n\beta_n)^2 \left|\frac{y}{\,|y|^2}  -\frac{y_j}{\,|y_j|^2}\right|^2\\
&=&\frac{(\alpha_n\beta_n)^2}{|y|^4|y_j|^4} \left| |y_j|^2y-|y|^2y_j\right|^2\\
&=&\frac{(\alpha_n\beta_n)^2}{|y|^4|y_j|^4} \left( |y_j|^4|y|^2+|y|^4|y_j|^2
- 2 |y_j|^2|y|^2 \, y\cdot y_j \right)\\
&=&\frac{(\alpha_n\beta_n)^2}{|y|^2|y_j|^2}  \left( |y_j|^2+|y|^2- 2  \, y\cdot y_j \right)\\
&=& \frac{(\alpha_n\beta_n)^2}{|y|^2|y_j|^2}  \,|y-y_j|^2.
\end{eqnarray}
Applying this to $y=y_k$ we have
\be \label{ykyj}
r_{k,j}^2=|x_k-x_j|^2 =  \frac{(\alpha_n\beta_n)^2}{|y_k|^2|y_j|^2}  \,|y_k-y_j|^2.
\ee
Applying it more generally for $j \neq n$ we have
\begin{eqnarray}
 \left( \frac{\beta_n}{|y|}\right)
       \left( \frac{\alpha_{X,j}}{|F(y)-x_j|}\right)
&=& \left( \frac{\beta_n}{|y|}\right)
       \left( \frac{\alpha_{X,j}\,|y|\,|y_j|}{\alpha_n \beta_n \,|y-y_j|}\right)
 =\frac{\beta_{Y,j}}{|y-y_j|}\\
 \left( \frac{\alpha_n}{|y|}\right)
       \left( \frac{\beta_{X,j}}{|F(y)-x_j|}\right)
&=& \left( \frac{\alpha_n}{|y|}\right)
       \left( \frac{\beta_{X,j}|y|\,|y_j|}{\alpha_n \beta_n \,|y-y_j|}\right)
 =\frac{\alpha_{Y,j}}{|y-y_j|}.
\end{eqnarray}
\end{proof}

\begin{cor} \label{mADM-Y}
Since $F$ is an isometry which maps the end at infinity for $Y$ to the end at $x_n$ for $X$ these ends have the same ADM mass:
\be
m_{ADM}(Y)=m_{X,n}.
\ee
Since $F$ is an isometry which maps the end at $y_0$ for $Y$ to the end at $\infty$ for $X$, these ends have the same ADM mass:
\be
m_{Y,0}=m_{ADM}(X).
\ee
\end{cor}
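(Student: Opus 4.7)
The plan is to verify both identities by direct substitution into the formulas \eqref{ADM} and \eqref{m_i}, using the explicit values of $\alpha_{Y,i}$, $\beta_{Y,i}$ and $y_i$ recorded in Theorem~\ref{inversion}. Since $F$ is an isometry of geometrostatic manifolds mapping the designated ends onto each other, both equalities are conceptually guaranteed by the fact that the ADM mass depends only on the asymptotic geometry of an end; the calculation below is the concrete realization of this.

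For the first identity, I would apply \eqref{ADM} to the geometrostatic manifold $(Y,g_Y)$, splitting off the $i=0$ term:
\be
m_{ADM}(Y)=\sum_{i=0}^{n-1}(\alpha_{Y,i}+\beta_{Y,i})=(\alpha_n+\beta_n)+\sum_{j=1}^{n-1}\left(\tfrac{\beta_{X,j}\alpha_n}{r_{j,n}}+\tfrac{\alpha_{X,j}\beta_n}{r_{j,n}}\right),
\ee
and then observe that this expression is precisely what \eqref{m_i} produces for $m_{X,n}$, upon identifying $\alpha_{X,n}=\alpha_n$ and $\beta_{X,n}=\beta_n$.

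For the second identity, I would apply \eqref{m_i} to $(Y,g_Y)$ at the $y_0=0$ end. The key intermediate step is to record the Euclidean separations in $Y$: since $y_0=0$, equation \eqref{ykyj} (or direct inspection of the definition of $y_j$) yields
\be
r_{0,j}^Y=|y_j|=\tfrac{\alpha_n\beta_n}{r_{j,n}}\quad\text{for }j=1,\dots,n-1.
\ee
Substituting this together with the explicit $\alpha_{Y,j},\beta_{Y,j}$ into \eqref{m_i} gives
\be
m_{Y,0}=\alpha_n+\beta_n+\sum_{j=1}^{n-1}\frac{\beta_n\cdot\tfrac{\beta_{X,j}\alpha_n}{r_{j,n}}+\tfrac{\alpha_{X,j}\beta_n}{r_{j,n}}\cdot\alpha_n}{\alpha_n\beta_n/r_{j,n}}=\alpha_n+\beta_n+\sum_{j=1}^{n-1}(\alpha_{X,j}+\beta_{X,j}),
\ee
which is exactly $m_{ADM}(X)$ by \eqref{ADM}.

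There is no real obstacle: the main point to keep track of is the bookkeeping of the indices ($y_0$ versus $y_j$ for $j\geq 1$) and the careful use of $|y_j|=\alpha_n\beta_n/r_{j,n}$, which produces the crucial cancellation turning the $1/r_{j,n}$-weighted sums at the $Y$-end into unweighted mass sums at the $X$-end (and vice versa). Both identities thus reduce to one-line substitutions once the explicit form of the parameters in Theorem~\ref{inversion} is inserted.
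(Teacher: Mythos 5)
Your proposal is correct and follows exactly the paper's own argument: both identities are obtained by direct substitution of the explicit parameters $\alpha_{Y,i}$, $\beta_{Y,i}$ and the separations $|y_0-y_j|=\alpha_n\beta_n/r_{j,n}$ into formulas (\ref{ADM}) and (\ref{m_i}), with the same cancellation of the $1/r_{j,n}$ weights. The only difference is that the paper additionally verifies $m_{Y,k}=m_{X,k}$ for the remaining ends, which is not required by the statement.
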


\begin{proof}
We also prove these equalities using (\ref{ADM}) and (\ref{m_i}) thus
rederiving these equations that were stated without
proof in Brill-Lindquist.  We have by (\ref{ADM}) that the ADM mass of the end at infinity for $Y$
satisfies:
\begin{eqnarray}
\,\,\,m_{ADM}(Y)&=& \sum_{j=0}^{n-1} (\alpha_{Y,j}+\beta_{Y,j})=
\alpha_{X,n} + \beta_{X,n} +
\sum_{j=1}^{n-1}\left(\frac{\beta_{X,j}\alpha_n}{r_{j,n}} +  \frac{\alpha_{X,j}\beta_n}{r_{j,n}}\right)\\
&=&\alpha_{X,n} + \beta_{X,n} +
\sum_{j=1}^{n-1}\left(\frac{\beta_{X,j}\alpha_{X,n}+\alpha_{X,j}\beta_{X,n}}{r_{j,n}}\right)\,\,=\,\, m_{X,n} \,\,\, \textrm{ by (\ref{m_i})}
\end{eqnarray}
where $m_{X,n}$ is the ADM mass of the end at $x_n$ for $X$.  This is also
a consequence of the fact that $F$ is an isometry which maps the end at infinity for $Y$ to the end at $x_n$ for $X$. We have by (\ref{m_i}) that the ADM mass of the end at $y_0$ for $Y$
satisfies
\begin{eqnarray} 
\,\,\,m_{Y,0}&=&
\alpha_{Y,0}+\beta_{Y,0}
+\sum_{j=1}^{n-1} \frac{(\beta_{Y,0} \,\alpha_{Y,j} +\alpha_{Y,0}\,\beta_{Y,j})}
{|y_0-y_j|}\\
&=&
\alpha_n+\beta_n
+\sum_{j=1}^{n-1} \frac{\,\,\beta_n (\beta_{X,j}\,\alpha_n/r_{j,n}) +
\alpha_n (\alpha_{X,j}\,\beta_n/r_{j,n})\,}
{(\alpha_n \beta_n/r_{j,n})}\\
&=&
\alpha_{X,n}+\beta_{X,n}
+\sum_{j=1}^{n-1} (\beta_{X,j}+ \alpha_{X,j}) \,= \, m_{ADM}(X)\,\,\,
\textrm{ by (\ref{ADM}) },
\end{eqnarray} 
where $m_{ADM}(X)$ is the ADM mass of the end at infinity for $X$.
This is also
a consequence of the fact that $F$ is an isometry which maps the end at $y_0$ for $Y$ to the end at $\infty$ for $X$. Recall that by (\ref{ykyj}) we have
\be
|y_k-y_j|= \frac{ r_{k,j} }{(\alpha_n\beta_n)}|y_k|\,|y_j|
= \frac{ r_{k,j} }{(\alpha_n\beta_n)}
\left(\frac{\alpha_n \beta_n}{r_{k,n}}\right)
\left(\frac{\alpha_n\beta_n}{r_{j,n}}\right) 
= \frac{\alpha_n\beta_n r_{k,j}}{r_{k,n}r_{j,n}}
\ee
We can combine this 
with (\ref{m_i}) to show that the 
ADM mass of the end at $y_k\neq y_0$ for $Y$
satisfies
\begin{eqnarray*} 
\qquad m_{Y,k}&=&
\alpha_{Y,k}+\beta_{Y,k}
+\frac{(\beta_{Y,k} \alpha_{Y,0} +\beta_{Y,0}\alpha_{Y,k})}
{|y_k-0|}
+\sum_{j\neq k, j=1}^{n-1} \frac{(\beta_{Y,k} \alpha_{Y,j} +\beta_{Y,j}\alpha_{Y,k})}
{|y_k-y_j|} \\
&=&
\frac{\beta_{X,k}\,\alpha_n}{r_{k,n}} + \frac{\alpha_{X,k}\,\beta_n}{r_{k,n}}
+\left( \frac{\alpha_{X,k}\,\beta_n\alpha_n}{r_{k,n}} 
+\frac{\beta_n\beta_{X,k}\,\alpha_n}{r_{k,n}}  \right)
\frac{r_{k,n}}{\alpha_n\beta_n}
\\
&&+\sum_{j\neq k, j=1}^{n-1} 
\frac{\,(\alpha_{X,k}\,\beta_n/r_{k,n}) (\beta_{X,j}\,\alpha_n/r_{j,n}) +
(\alpha_{X,j}\,\beta_n/r_{j,n})(\beta_{X,k}\,\alpha_n/r_{k,n})\,}
{(\alpha_n\beta_n r_{k,j})/(r_{k,n}r_{j,n})}
 \\
&=&
\frac{\,\beta_{X,k}\,\alpha_{X,n}+ \alpha_{X,k}\,\beta_{X,n}\,}{|x_k-x_n|}
+\left( \alpha_{X,k}+\beta_{X,k}  \right)
+\sum_{j\neq k, j=1}^{n-1} 
\frac{\,(\alpha_{X,k}\beta_{X,j} +
\alpha_{X,j}\beta_{X,k})\,}
{ r_{k,j}}
 \\
 &=&
\left( \alpha_{X,k}+\beta_{X,k}  \right)
+\sum_{j\neq k, j=1}^{n} 
\frac{\,(\alpha_{X,k}\beta_{X,j} +
\alpha_{X,j}\beta_{X,k})\,}
{ r_{k,j}} \,\,\,=\,\,\, m_{X,k} \,\,\,\textrm{ by (\ref{m_i})}
\end{eqnarray*}
where $m_{X,k}$ is the 
ADM mass of the end at $x_k\neq x_n$ for $X$.  This is
also a consequence of the fact that $F$ maps the end of $Y$
at $y_k$ to the end of $X$ at $x_k$ for $k=1,...,n-1$.
\end{proof}



 \ack 
 
The authors would like to thank Mingliang Cai, Qing Chen, Xiuxiong Chen, Piotr Chrusciel, Greg Galloway, Sen Hu and Jim Isenberg for organizing the July 2014 {\em Geometric Analysis and Relativity Conference} at the University of Science and Technology of China  at which this collaboration commenced.
We would also like to thank Greg Galloway, David Maxwell, Richard Schoen, and Daniel Pollack for organizing the Banff Geometric Analysis and General Relativity
Workshop in July 2016 during which the final version of this work was formulated.

Prof. Sormani's research is funded in part by a PSC CUNY Research Grant and by NSF DMS 1309360.
Prof. Stavrov's research is funded in part by travel grants from Lewis \& Clark College.


\frenchspacing
\bibliographystyle{cpam}

\begin{thebibliography}{99}


\bibitem[BL63]{Brill-Lindquist}
Dieter Brill and Richard Lindquist.
\newblock Interaction {E}nergy in {G}eometrostatics.
\newblock {\em Article Physical Review}, 131:471--476, 1963.

\bibitem[Bra01]{Bray-Penrose}
Hubert~L. Bray.
\newblock Proof of the {R}iemannian {P}enrose inequality using the positive
  mass theorem.
\newblock {\em J. Differential Geom.}, 59(2):177--267, 2001.

\bibitem[CGT82]{ChGrTa}
Jeff Cheeger, Mikhail Gromov, and Michael Taylor.
\newblock Finite propagation speed, kernel estimates for functions of the
  {L}aplace operator, and the geometry of complete {R}iemannian manifolds.
\newblock {\em J. Differential Geom.}, 17(1):15--53, 1982.

\bibitem[CM11]{CoMinCourse}
Tobias~Holck Colding and William~P. Minicozzi, II.
\newblock {\em A course in minimal surfaces}, volume 121 of {\em Graduate
  Studies in Mathematics}.
\newblock American Mathematical Society, Providence, RI, 2011.

\bibitem[HI01]{Huisken-Ilmanen}
Gerhard Huisken and Tom Ilmanen.
\newblock The inverse mean curvature flow and the {R}iemannian {P}enrose
  inequality.
\newblock {\em J. Differential Geom.}, 59(3):353--437, 2001.

\bibitem[HLS16]{HLS-Crelle}
Lan-Hsuan Huang, Dan A Lee, and Christina Sormani.
\newblock Intrinsic flat stability of the positive mass theorem for graphical
  hypersurfaces of {E}uclidean space.
\newblock {\em Journal fur die Riene und Angewandte Mathematik (Crelle's
  Journal)}, pages 1--31, 2016.

\bibitem[Jau14]{Jaur-lower}
J~Jauregui.
\newblock On the lower semicontinuity of the adm mass.
\newblock {\em Comm. Anal. Geom.}, 26, (1), 2018.

\bibitem[LS13]{Lakzian-Sormani}
Sajjad Lakzian and Christina Sormani.
\newblock Smooth convergence away from singular sets.
\newblock {\em Comm. Anal. Geom.}, 21(1):39--104, 2013.

\bibitem[L09]{Lee09}
Dan~A. Lee.
\newblock On the near-equality case of the positive mass theorem.
\newblock {\em Duke.}, 148(1):63-80, 2009.

\bibitem[LS14]{LeeSormani1}
Dan~A. Lee and Christina Sormani.
\newblock Stability of the positive mass theorem for rotationally symmetric
  {R}iemannian manifolds.
\newblock {\em J. Reine Angew. Math.}, 686:187--220, 2014.

\bibitem[LS15]{LS}
Philippe~G. LeFloch and Christina Sormani.
\newblock The nonlinear stability of rotationally symmetric spaces with low
  regularity.
\newblock {\em J. Funct. Anal.}, 268(7):2005--2065, 2015.

\bibitem[L44]{Lich-1944}
A~A Lichnerowicz.
\newblock Sur l'intégration des équations d'{E}instein.
\newblock {\em J. Math. Pures Appl.}, 23:26-63, 1944.

\bibitem[Mis63]{Misner-geo}
Charles~W. Misner.
\newblock The method of images in geometrostatics.
\newblock {\em Ann. Physics}, 24:102--117, 1963.

\bibitem[SW11]{SW-JDG}
Christina Sormani and Stefan Wenger.
\newblock The intrinsic flat distance between {R}iemannian manifolds and other
  integral current spaces.
\newblock {\em J. Differential Geom.}, 87(1):117--199, 2011.

\bibitem[SY79]{Schoen-Yau-positive-mass}
Richard Schoen and Shing~Tung Yau.
\newblock On the proof of the positive mass conjecture in general relativity.
\newblock {\em Comm. Math. Phys.}, 65(1):45--76, 1979.



\end{thebibliography}

\end{document}